\newcommand{\hookdownarrow}{\mathrel{\rotatebox[origin=c]{-90}{$\hookrightarrow$}}}
\newcommand{\A}{\mathbb{A}}
\def\AA{{\mathbb A}}
\def\CC{{\mathbb C}}
\def\PP{{\mathbb P}}
\let\goth=\mathfrak
\newcommand{\red}{_{gr}}
\newcommand{\f}{\frac}
\newcommand{\findem}{\nolinebreak\vspace{\baselineskip} \hfill\rule{2mm}{2mm}}
\renewcommand{\phi}{\ensuremath{\varphi}}
\newcommand{\pla}{\A^2}
\renewcommand{\phi}{\ensuremath{\varphi}}
\newcommand{\codim}{\mbox{codim}\;}
\newtheorem{nt}{Notation}[section]
\newtheorem{prop}[nt]{Proposition}
\newtheorem{exercice}{Exercice}
\newcounter{numeroquestion} 
\newcounter{numerosousquestion}
\newcommand{\sousquestion}{\ifthenelse{\value{numerosousquestion}=1}{}{\\}\textbf{\roman{numerosousquestion})} \addtocounter{numerosousquestion}{1}}
\newtheorem{correction}{Correction}
\newenvironment{listecompacte}
{\begin{list}
    {\ensuremath{\bullet}}
    {\setlength{\topsep}{2pt}
      \setlength{\itemsep}{1pt} \setlength{\parsep}{0pt}}
}
{\end{list}
}
\newtheorem{coro}[nt]{Corollary} 
\newtheorem{defi}[nt]{Definition}
\newtheorem{ex}[nt]{Example} 
\newtheorem{lm}[nt]{Lemma} 
\newtheorem{rem}[nt]{Remark} 
\newtheorem{thm}[nt]{Theorem}
\newtheorem*{mercis}{Acknowledgments}
\newcommand{\N}{\mathbb N}
\newcommand{\gl}{\mathfrak {gl}}
\newcommand{\g}{\mathfrak g}
\newcommand{\pp}{\mathfrak p}
\newcommand{\bb}{\mathfrak b}
\newcommand{\qq}{\mathfrak q}
\newcommand{\nf}{\mathfrak n}
\newcommand{\wfr}{\mathfrak w}
\newcommand{\K}{{\Bbbk}}
\newcommand{\MCM}{\mathcal M}
\newcommand{\MN}{\mathcal N}
\newcommand{\MNTilde}{\widetilde{\mathcal N}}
\newcommand{\Pn}{\mathcal P}
\newcommand{\bolda}{\boldsymbol{\lambda}}
\newcommand{\boldmu}{\boldsymbol{\mu}}
\newcommand{\sn}{S^{[n]}}
\newcommand{\sno}{S^{[n]}_0}
\newcommand{\zno}{Z^{[n]}_0}
\newcommand{\zoemb}{Z^{[n_j,n_{j-1},\dots,n_1]}_0}
\newcommand{\sxo}[1]{S^{[#1]}_0}
\newcommand{\sxn}[1]{S^{[#1,n]}}
\newcommand{\skn}{\sxn k}
\newcommand{\sxno}[1]{S^{[#1,n]}_0}
\newcommand{\sxyo}[2]{S^{[#1,#2]}_0}
\newcommand{\skno}{\sxno k}
\newcommand{\sxnemb}[1]{S^{[\![#1,n]\!]}}
\newcommand{\sknemb}{\sxnemb k}
\newcommand{\sxnoemb}[1]{S^{[\![#1,n]\!]}_0}
\newcommand{\sknoemb}{\sxnoemb k}
\newcommand{\sdeuxno}{\sxno 2}
\newcommand{\decp}[3]{
\left(\begin{array}{c| c c c}#1& & #2& \\ \hline 0 & & &\\ \vdots & & #3 &\\0 & & & \end{array}\right)} 
\newcommand{\decpp}[3]{
\left(\begin{array}{c c c c}#1& & #2& \\  0 & & &\\ \vdots & & #3 &\\0 & & & \end{array}\right)} 
\newcommand{\func}[4]{\left\{\begin{array}{r c l}#1&\rightarrow & #2\\ #3&\mapsto & #4\end{array}\right.}
\newcommand{\alter}[4]{\left\{\begin{array}{l l}#1&\mbox{#2} \\#3&\mbox{#4}\end{array}\right.}
\newcommand{\pxlnil}{(\pp^{X_{\bolda}})^{nil}}
\newcommand\undermat[2]{%
  \makebox[0pt][l]{$\smash{\underbrace{\phantom{%
    \begin{matrix}#2\end{matrix}}}_{\text{$#1$}}}$}#2}
    \newlength{\arrayrulewidthOriginal}
    \newcommand{\Hline}[1]{%
      \noalign{\global\setlength{\arrayrulewidthOriginal}{\arrayrulewidth}}%
      \noalign{\global\setlength{\arrayrulewidth}{#1}}\hline%
      \noalign{\global\setlength{\arrayrulewidth}{\arrayrulewidthOriginal}}}
\DeclareMathOperator{\Hom}{Hom}
\DeclareMathOperator{\End}{End}
\DeclareMathOperator{\Ker}{Ker}
\DeclareMathOperator{\Img}{Im}
\DeclareMathOperator{\GL}{GL}
\DeclareMathOperator{\Lie}{Lie}
\begin{document}
\sloppy
\title{Nested punctual Hilbert schemes and commuting varieties of
  parabolic subalgebras\thanks{This work has benefited from two short stay funded by the GDR TLAG}}
\date{\today}
\author{{\sc Micha\"el Bulois}\thanks{Universit\'e de Lyon, CNRS UMR 5208, Universit\'e Jean Monnet, Institut Camille Jordan, Maison de l'Universit\'e, 10 rue Tr\'efilerie, CS 82301, 42023 Saint-Etienne Cedex 2, France. \href{mailto:michael.bulois@univ-st-etienne.fr}{michael.bulois@univ-st-etienne.fr}} \phantom{aaa} 
{\sc Laurent Evain}\thanks{Universit\'e d'Angers,
Facult\'e des Sciences,
D\'epartement de maths,
2 Boulevard Lavoisier,
49045 Angers Cedex 01,
FRANCE. \href{mailto:laurent.evain@univ-angers.fr}{laurent.evain@univ-angers.fr}}}
\maketitle


\section*{Abstract: }
It is known that the variety parametrizing pairs of commuting
nilpotent matrices is irreducible and that this provides a proof of
the irreducibility of the punctual Hilbert scheme in the plane. 
We extend this link to the nilpotent commuting
variety of some parabolic subalgebras of $M_n(\K)$ and to the punctual nested Hilbert scheme. By this method, we obtain
a lower bound on the dimension of these moduli spaces. We characterize the
cases where they are irreducible. In some
reducible cases, we describe the
irreducible components and their dimensions. 


\tableofcontents
\section{Introduction}
Let $\K$ be an algebraically closed field of arbitrary characteristic. 

Let $S^{[n]}$ denote the Hilbert scheme parametrizing the 
zero dimensional schemes $z_n$ in the affine plane $S=\AA^2=Spec\ \K[x,y]$
with $length(z_n)=n$. Several variations from this original 
Hilbert scheme
have been considered. For instance, Brian\c con 
studied the punctual Hilbert scheme $\sno$ 
which parametrizes the subschemes $z_n$ with length $n$ and support 
on the origin \cite{Br}, and Cheah has considered 
the nested Hilbert schemes parametrizing tuples of zero dimensional
schemes $z_{k_1}\subset z_{k_2}\subset \dots \subset z_{k_r}$
organised in a tower of successive inclusions
\cite{Ch1,Ch2}.

Let ${\cal C}(M_n)$ be the commuting variety
of $M_n$, \emph{i.e.} the variety parametrizing the pairs 
of square matrices $(X,Y)$ with
$X\in M_n(\K),Y\in M_n(\K)$, $XY=YX$. 
Gerstenhaber \cite{Ge} proved the irreducibility 
of ${\cal C}(M_n)$. 
Many variations in the same circle of ideas have been considered.
For instance, one can consider ${\cal C}(\goth a)$,
where  $\goth a\subset M_n$ is a subspace (often a Lie subalgebra),
or $\MN(\goth a)\subset
{\cal C}(\goth a)$ defined by the condition that $X,Y$ be nilpotent  (cf. \emph{e.g.} \cite{Pa, Bar, Pr,  Bu, GR}).

There is a well known connection between Hilbert schemes and commuting varieties. 
If $z_n\in S^{[n]}$ is a zero dimensional subscheme, and if 
$b_1,\dots,b_n$ is a base of the structural sheaf ${\cal O}_{z_n}= \K[x,y]/I_{z_n}$,
the multiplications by $x$ and $y$ on  ${\cal O}_{z_n}$ are represented 
by a pair of commuting matrices $X,Y$. The scheme $z_n$ is
characterized by the pair of commuting matrices $(X,Y)$ up to simultaneous conjugation.
This link has been intensively 
used by Nakajima \cite{Na}. Obviously, variations 
on the Hilbert scheme correspond to variations on the commuting
varieties.

The goal of this paper is to study the 
punctual nested Hilbert schemes 
$\skno$ and $\sknoemb$ and their matrix 
counterparts $\MN(\pp_{k,n})$ and $\MN(\qq_{k,n})$. Here 
$S_0^{[k,n]}\subset S_0^{[k]}\times S_0^{[n]}$ parametrizes the pairs 
of punctual schemes $z_k,z_n$ with $z_k\subset z_n$ and $\sknoemb \subset
S_0^{[k]}\times S_0^{[k+1]} \times\dots\times  S_0^{[n]}$ 
parametrizes the tuples $z_k\subset z_{k+1}\dots \subset z_n$,
$\pp_{k,n}\subset M_n$ is a parabolic subalgebra defined by 
a flag $F_0\subset F_k \subset F_n$ with $\dim F_i=i$ and  
$\qq_{k,n}$ is associated with a flag $F_0\subset F_1\dots 
\subset F_k\subset F_n$. 

Our interest in the nested punctual Hilbert schemes 
stems from the the creation and
annihilation operators on the cohomology of the Hilbert scheme 
introduced by Nakajima and Grojnowski \cite{Na, Gr}.
The geometry of the nested Hilbert schemes controls these operators.
A typical application is the vanishing of a cohomology class 
which is the push-down of the class of a variety under a 
contracting morphism. It is often necessary
to describe the components of the nested Hilbert schemes 
and/or their dimension
to simplify the computations \cite{Na,Le,CE}.
On the Lie algebra side, the subalgebras $\pp_{k,n}\subset M_n$
are the maximal parabolics, hence are prototypes for the study of
general parabolics. On the other hand, the algebras $\qq_{k,n}$ are
used as a tool to study some other cases and are well behaved for our
computations..  
Closely linked to this setting, note also that $\qq_{n,n}$ is a Borel
subalgebra of $M_n$. Some properties of $\MN(\mathfrak
\qq_{n,n})$ can be found in \cite{GR}.

Let  $P_{k,n}$, resp.
$Q_{k,n}$, be the groups of invertible matrices in $\pp_{k,n}$, resp. $\qq_{k,n}$. It acts on $\pp_{k,n}$, resp. $\qq_{k,n}$, by conjugation. In the Lie algebra setting, $P_{k,n}$, resp. $Q_{k,n}$, is nothing but the parabolic subgroup of $\GL_n(\K)$ with  Lie algebra $\pp_{k,n}$, resp. $\qq_{k,n}$.

It is possible 
in our context  
to make precise the connection between  Hilbert schemes and 
commuting varieties.
Since zero dimensional schemes are characterized by pairs of commuting
matrices up to the choice of the base, the expectation is that 
Hilbert schemes should be quotients of commuting varieties. 
This is correct in essence, provided that one takes care of the
existence of cyclic vectors. Moreover,  
the acting groups $P_{k,n}$ and
$Q_{k,n}$ are not reductive. Nevertheless, 
we will construct a geometric quotient in the sense of Mumford
\cite{Mu}, as follows. 

Let $ \MNTilde^{cyc}(\pp_{k,n})$ and $\MNTilde^{cyc}(\qq_{k,n})$ be the open
  loci in  $ \MN(\pp_{k,n})\times \K^n$ and $\MN(\qq_{k,n})\times \K^n$ defined by the
  existence of a cyclic vector, \emph{i.e.} these open loci parametrize the
  tuples $((X,Y),v)$ with $\K[X,Y](v)=\K^n$. 
They are stable under the respective action of $P_{k,n}$ and $Q_{k,n}$. 

\newcommand{\quo}{q}
\newcommand{\quoprime}{q'}
\newcommand{\isompHilb}{i}
\newcommand{\isomqHilb}{i'}


\begin{thmSansNumero}{ \ref{thmcorres}}
  \begin{enumerate} 
  \item 
  There exist  geometric quotients $\quo:\MNTilde^{cyc}(\pp_{k,n})\to
  \MNTilde^{cyc}(\pp_{k,n})/P_{k,n}$ and $\quoprime:\MNTilde^{cyc}(\qq_{k,n})\to
  \MNTilde^{cyc}(\qq_{k,n})/Q_{k,n}$  and they are principal bundles locally
  trivial for the Zariski topology.
\item There exist surjective morphisms
  $\widetilde{\pi}_{k,n}:\begin{array}{rcl}\MNTilde^{cyc}(\pp_{k,n})&\rightarrow
      &\sxno{n-k}\end{array}$,
$\widetilde{\pi}'_{k,n}:\begin{array}{rcl}\MNTilde^{cyc}(\qq_{k,n})&\rightarrow
    &\sxnoemb{n-k}\end{array}$.
\item There exist isomorphisms $\isompHilb:\MNTilde^{cyc}(\pp_{k,n})/P_{k,n} \to
  \sxno{n-k}$ and  $\isomqHilb:\MNTilde^{cyc}(\qq_{k,n})/Q_{k,n} \to
  \sxnoemb{n-k}$. These isomorphisms identify the projections to 
the Hilbert schemes with the geometric quotients, \emph{i.e.}
  $\isompHilb \circ \quo=\widetilde{\pi}_{k,n}$ and  $\isomqHilb \circ \quoprime=\widetilde{\pi}'_{k,n}$.
\end{enumerate}
\end{thmSansNumero}

This is directly inspired from the general construction of Nakajima's quiver varieties (see \emph{e.g.} \cite{Gi}), the cyclicity being a stability condition in the sense of \cite{Mu}. It can straightforwardly be generalized to any parabolic subalgebra of $M_n$.

We then investigate the dimension and the number of components of  
$ \MN(\pp_{k,n})$, $\MN(\qq_{k,n})$, $\sxno{k}$ and $\sxnoemb{k}$.  
Many of our proofs consider the problem for $
\MN(\pp_{k,n})$, $\MN(\qq_{k,n})$ firstly and then use
the above theorem and some geometric arguments to push down 
the information to the Hilbert schemes.  Conversely, sometimes, we pull back the
information from the Hilbert scheme to the commuting variety.  
The
general philosophy is that the problems on the
commuting varieties are in some sense ``linear'' versions 
of the corresponding problems on the Hilbert scheme 
which are ``polynomial'' problems. This explains why the most frequent direction of
propagation of the information is from commuting varieties to Hilbert
schemes.

\begin{thmSansNumero}{ \ref{thmCNSIrred}}
  $\skno$ is irreducible if and only if $k\in \{0,1,n-1,n\}$. The
  variety $\MN(\pp_{k,n})$ is irreducible if and only if $k\in \{0,1,n-1,n\}$. 
\end{thmSansNumero}


\begin{thmSansNumero}{ \ref{thmCNSIrredq}}
$\sknoemb$ is irreducible if and only if $k\in \{n-1,n\}$ or $n\leqslant 3$. 
  $\MN(\qq_{k,n})$ is irreducible if and only if $k\in \{0,1\}$ or $n\leqslant 3$.
\end{thmSansNumero}
\medskip

When $k=2$ or $k=n-2$, we have precise results on the number 
of components and their dimensions. 

\begin{thmSansNumero}{ \ref{propp2n}}
Let $\wfr=\qq_{2,n}$ or $\pp_{2,n}$. Then $\MN(\wfr)$ is
equidimensional of dimension $\dim \wfr-1$. It
 has $\left\lfloor \frac n 2 \right\rfloor$ components. 
\end{thmSansNumero}

\begin{thmSansNumero}{ \ref{corpropp2n}}
  $\sxno{2}$, $\sxno{n-2}$, $\sxnoemb{n-2}$ are equidimensional of dimension
  $n-1$. They have $\left \lfloor \frac{n}{2} \right\rfloor$ components. 
\end{thmSansNumero}

\medskip

The similarity between $\sxno{k}$ and $\sxno{n-k}$ follows from a
transposition isomorphism between $\MN(\pp_{k,n})$ and
$\MN(\pp_{n-k,n})$. 
Note however that there might be profound differences between
the Hilbert schemes
and the corresponding commuting varieties because of
the cyclicity condition, see remark \ref{remSurLaSymetrie}. 

Without any assumption on $k\in [\![0,n]\!]$, we have an estimate for the dimension
of the components.

\begin{propSansNumero}{ (Section \ref{seclowbound})}
  Each  irreducible component of $\sxnoemb{k}$ has dimension
  at least $n-1$ which is the dimension of the
  curvilinear component. Each irreducible component
  of $\sxno{k}$ has dimension at least $n-2$, which is the
  dimension of the curvilinear component minus one. Each irreducible
  component of $\MN(\qq_{k,n})$ has dimension at least
  $\dim\qq_{k,n}-1$.  Each irreducible component of $\MN(\pp_{k,n})$
  has dimension at least $\dim\pp_{k,n}-2$.  
\end{propSansNumero}

Note that the result is not optimal for $\pp_{k,n}$ and $\sxno{k}$ as
Theorems \ref{propp2n} and \ref{corpropp2n} show. 

Our approach does not depend on the characteristic of $\K$. One
reason that makes this possible is that we often rely on the key work
of Premet in \cite{Pr} made in arbitrary characteristic.

Several statements in the paper allow generalisations or abstract
reformulations. To keep the paper readable by a large audience, 
we have chosen a presentation which minimizes the prerequisites.
Hopefully, the paper is readable by 
a non specialist in at least one of the domains Hilbert
schemes/commuting varieties.

\begin{mercis}
We are grateful to Markus Reineke for computing and communicating to us the example mentioned in Remark~\ref{remfin}. 
\end{mercis}
\section{Reducible nested Hilbert schemes}

\label{sec:reduc-nest-hilb}
Throughout the paper, we work over an algebraically closed field $\K$
of arbitrary characteristic.

In this section, we produce examples of reducible nested Hilbert
schemes, and we identify some of their components via direct computations. 
 

Let $S=\pla=Spec\;\K[x,y]$ be the affine plane. We denote by $S^{[n]}$
the Hilbert scheme parametrizing the zero dimensional subschemes $z_n\subset \pla$ 
of length $n$. We denote by $\skn \subset S^{[k]}\times S^{[n]}$ the Hilbert
scheme parametrizing the pairs $(z_k,z_n)$ with $z_k\subset
z_n$. We denote by $\sknemb \subset
S^{[k]}\times S^{[k+1]} \times\dots\times  S^{[n]}$ 
the Hilbert scheme that parametrizes the tuples of subschemes $(z_k,z_{k+1},\dots,z_n)$ 
with $z_k\subset z_{k+1}\dots \subset z_n$. An index $0$ indicates
that the schemes considered are supported on the origin. For instance, 
$\skno\subset S^{[k]}_0 \times S^{[n]}_0$ is the Hilbert
scheme parametrizing the pairs $(z_k,z_n)$ with $z_k\subset
z_n$ and $supp(z_k)=supp(z_n)=O$. 

All these Hilbert schemes have a functorial description. 
For the original Hilbert scheme, see \cite{Gro} or \cite{HM} for a
modern treatment. For the nested Hilbert schemes see \cite{Kee}.
For the versions supported on the origin, a good reference is \cite{Ber}.
Section \ref{sec:descriptionsFonctorielles} will recall the main technical descriptions that we
need. 


\begin{prop} \label{irredOnlyFor1nMoins1}
  For $k\neq 0,1,n-1,n$, $\skno$ is reducible.
\end{prop}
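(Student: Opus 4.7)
The plan is to exhibit, for every $k$ with $2 \le k \le n-2$, two distinct irreducible components of $\skno$.

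The first is the curvilinear component. Let $U_{\mathrm{cur}} \subset S_0^{[n]}$ be the open locus of punctual subschemes contained in a smooth curve germ through the origin. By Brian\c con's theorem $U_{\mathrm{cur}}$ is dense in $S_0^{[n]}$, of dimension $n-1$. For $z_n \in U_{\mathrm{cur}}$ the length-$k$ subscheme of $z_n$ supported at the origin is uniquely determined as the $k$-th infinitesimal neighborhood of the origin on the smooth curve containing $z_n$, so the forgetful projection $\pi \colon \skno \to S_0^{[n]}$ restricts to an isomorphism over $U_{\mathrm{cur}}$. Hence $V_{\mathrm{cur}} := \overline{\pi^{-1}(U_{\mathrm{cur}})}$ is an irreducible closed subset of $\skno$, of dimension $n-1$, mapping birationally onto $S_0^{[n]}$, whose generic point has $z_n$ curvilinear.

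For a second component, consider the explicit pair $z_k^\star = V(y, x^k)$ and $z_n^\star = V(y^2, yx, x^{n-1})$. A direct computation shows that $z_n^\star$ is a punctual scheme of length $n$, with $\mathcal{O}_{z_n^\star}$-basis $\{1, x, \ldots, x^{n-2}, y\}$, that it contains $z_k^\star$ (since $(y^2, yx, x^{n-1}) \subset (y, x^k)$ when $k \le n-1$), and that it has Hilbert function $(1, 2, 1, \ldots, 1)$, so in particular embedding dimension $2$; hence $z_n^\star$ is not curvilinear. If I can show $(z_k^\star, z_n^\star) \notin V_{\mathrm{cur}}$, then any irreducible component of $\skno$ passing through this pair is automatically distinct from $V_{\mathrm{cur}}$, and $\skno$ is reducible.

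The main obstacle is therefore the non-containment $(z_k^\star, z_n^\star) \notin V_{\mathrm{cur}}$. The approach I have in mind is to analyse the fiber $\pi^{-1}(z_n^\star) \subset \skno$, which parameterises ideals of colength $k$ in $\mathcal{O}_{z_n^\star}$ supported at the maximal ideal. By an explicit parameterisation in terms of tangent directions and socle elements one checks that this fiber is positive-dimensional for $2 \le k \le n-2$. Together with the birationality of $\pi|_{V_{\mathrm{cur}}}$ this forces, via upper semicontinuity and a dimension count at the generic point of the non-curvilinear Hilbert-function stratum $(1,2,1,\ldots,1)$, a strict inclusion $V_{\mathrm{cur}} \cap \pi^{-1}(z_n^\star) \subsetneq \pi^{-1}(z_n^\star)$. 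A tangent-space computation at $(z_k^\star, z_n^\star)$, or an explicit obstruction to realising this pair as the limit of a curvilinear one-parameter family $(z_k^t, z_n^t)$ with $z_n^t \in U_{\mathrm{cur}}$, then identifies the specific pair $(z_k^\star, z_n^\star)$ as lying outside $V_{\mathrm{cur}}$.
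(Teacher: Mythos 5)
Your reduction of the statement to the single non-containment claim $(z_k^\star, z_n^\star)\notin V_{\mathrm{cur}}$ is logically valid (if $\skno$ were irreducible, the open set $\pi^{-1}(U_{\mathrm{cur}})$ would be dense, so $V_{\mathrm{cur}}=\skno$ and the point would lie in $V_{\mathrm{cur}}$). The trouble is that this claim is exactly where the proof must be done, and you do not do it. The last paragraph only says that ``a tangent-space computation \ldots or an explicit obstruction \ldots then identifies the specific pair as lying outside $V_{\mathrm{cur}}$''; nothing is actually computed. Worse, it is not at all obvious that the claim is true, and your sketch contains no indication why it would be. Since $z_n^\star$ certainly lies in the closure of the curvilinear locus of $S_0^{[n]}$ and $\pi$ is proper, the fiber $\pi^{-1}(z_n^\star)$ meets $V_{\mathrm{cur}}$ in a nonempty closed subset; showing that $z_k^\star$ avoids that subset requires a genuine analysis of which limits of curvilinear subschemes arise along degenerations $z_n^t\to z_n^\star$, and for a torus-fixed pair such as $(z_k^\star, z_n^\star)$ this is delicate. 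In short, you have replaced the theorem by an equally hard (and possibly false, for your particular choice of $z_k^\star$) lemma.

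The paper avoids this pitfall by arguing with a whole family rather than one point. It exhibits an $(n-1)$-dimensional irreducible family of pairs in which $z_n$ always has embedding dimension $2$ (hence is never curvilinear), and separately proves that $V_{\mathrm{cur}}$ is itself an irreducible \emph{component} of dimension $n-1$, by using the bijectivity of $\pi$ over the curvilinear locus of $S_0^{[n]}$ together with Brian\c con's openness statement. Equality of dimensions then forces the two closed irreducible sets to be distinct components: the non-curvilinear family cannot sit inside the $(n-1)$-dimensional component $V_{\mathrm{cur}}$ since it is disjoint from the dense open curvilinear locus of $V_{\mathrm{cur}}$. This dimension-and-genericity argument requires no control over any individual boundary point, which is precisely what makes it work and what your approach is missing. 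To salvage your route you would need to (a) actually carry out the local analysis of $V_{\mathrm{cur}}\cap\pi^{-1}(z_n^\star)$, and (b) verify that it misses $z_k^\star$ (or replace $z_k^\star$ by a better-chosen point); it is much more economical to pass to the positive-dimensional family as the paper does.
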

\begin{proof}
  Recall that a curvilinear scheme of length $n$ 
  is a punctual scheme which can be defined by the ideal
  $(x,y^n)$ in some system of coordinates \emph{i.e.} this is a punctual
  scheme included in a smooth curve. The curvilinear schemes  form an
  irreducible subvariety of 
  $S_0^{[n]}$ of dimension $n-1$ \cite{Br}.
  We prove that $\skno$ admits at least two components: the
  curvilinear component where $z_k$ and $z_n$ are both curvilinear (of
  dimension $n-1$ since $z_k=(x,y^k)$ is determined by $z_n=(x,y^n)$ ) 
  and an other component of dimension  greater or equal than $n-1$. 
  The families that we exhibit below are special cases of more general
  constructions which 
  give charts on the Hilbert schemes \cite{Ev}.

Consider the families
of subschemes $z_k$, $z_n$, with equation $I_k$ and
$I_n$ where $I_n=(x^{n-1},yx+\sum_{i=2}^{n-2} a_i x^i,
y^2+\sum_{i=2}^{n-2} a_i yx^{i-1}+bx^{n-2})$. Let $\phi$ be the
change of coordinates defined by $x\mapsto x$, $y\mapsto
y-\sum_{i=2}^{n-2} a_i x^{i-1}$. Then
$\phi(I_n)=(x^{n-1},yx,y^2+bx^{n-2})$. In particular, for each choice
of the 
parameters $a_i,b$, the scheme $z_n$ has length $n$.  


We may suppose $n\geq 4$, otherwise there are no integers $k$ to
consider in the proposition. Then  all the generators of $I_n$ 
have valuation at least two and it follows that $z_n$ is not curvilinear.

For each $z_n$, there is a one dimensional family
of subschemes $z_k\subset z_n$. We check this claim in the coordinate
system where $I_n=(x^{n-1},yx,y^2+bx^{n-2})$.
Consider $I_k=(x^{k},y-cx^{k-1})$. Modulo $I_k$ we have $x^{n-1}=0$
and $yx=cx^k=0$. Since $k\leq n-2$ and $k\geq 2$, 
$y^2+bx^{n-2}=y^2=(cx^{k-1})^2=0$.   Thus $I_n\subset I_k$, as
expected. 

All the ideals $I_n$ and $I_k$ are pairwise distinct since their generators form a
reduced Gr\"obner basis for the order $y>>x$
and a reduced Gr\"obner
basis is unique (\cite{Eis}, Exercise 15.14). We thus 
have two families of dimension $n-1$, namely the curvilinear
component and the family we constructed with the parameters
$(a_i,b,c)$. It remains to prove that
they cannot be both included in a same component $V$ of dimension
$\geq n$.  For this, we prove that the closure of the 
curvilinear locus is an irreducible component. 

Let $p$ be the projection $\skno\rightarrow \sno$. Let
$C^n\subset   \sno$ be the curvilinear locus and
$C^{k,n}=(p^{-1}(C^n))_{red}$ be the reduced inverse image.
Note that $p$ restricts to a bijection between $C^{k,n}$ and $C^n$. Let $V$ be an
irreducible variety containing the curvilinear locus $C^{k,n}$. 
Since $C^n$ is open in $p(V)\subset \sno$ by \cite{Br} and since $p$ restricts to a
bijection between $C^{k,n}$ and $C^n$, we have $\dim V=\dim C^n=n-1$. 
\end{proof}

In general $\skno$ has more than the 
two components exhibited in Proposition \ref{irredOnlyFor1nMoins1}. 
For instance, corollary \ref{corpropp2n} shows that 
 $\sdeuxno$ is equidimensional with  $\left \lfloor \frac{n}{2} \right \rfloor$ components.
As a first step towards this goal, we count the number of components
of dimension $n-1$. 


\begin{prop}\label{composantesDeDimNMoins1DansSdeuxno}
$\sdeuxno$ contains exactly $\lfloor n/2 \rfloor $ components of dimension $n-1$.   
\end{prop}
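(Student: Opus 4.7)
The plan is to analyze the projection $p \colon \sdeuxno \to \sxo{n}$, $(z_2,z_n) \mapsto z_n$. A length-$2$ subscheme at the origin corresponds to a point of $\sxo{2} \cong \mathbb{P}^1$ (a hyperplane of $\mathfrak{m}/\mathfrak{m}^2$), and $z_2 \subset z_n$ amounts to the image of $I_{z_n}$ in $\mathfrak{m}/\mathfrak{m}^2$ lying inside that hyperplane. Thus the fiber of $p$ over $z_n$ is a single point when $I_{z_n} \not\subset \mathfrak{m}^2$ (the unique tangent direction of $z_n$), and is the whole $\mathbb{P}^1$ otherwise. Let $Y_n := \{\, z_n \in \sxo{n} : I_{z_n} \subset \mathfrak{m}^2\,\}$. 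Since $\sxo{n}$ is irreducible of dimension $n-1$ (Brian\c con), $Y_n$ is a proper closed subvariety, and the irreducible components of $\sdeuxno$ of dimension $n-1$ are the curvilinear component (coming from the open dense complement of $Y_n$) together with the preimage $p^{-1}(C)$ of each irreducible component $C$ of $Y_n$ of dimension $n-2$, each such preimage being an irreducible $\mathbb{P}^1$-bundle of dimension $n-1$.

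It suffices therefore to prove that $Y_n$ has exactly $\lfloor n/2 \rfloor - 1$ irreducible components of dimension $n-2$. I would stratify $Y_n$ by the Hilbert function $T = (t_0, t_1, \ldots)$ of the associated graded of $R/I_{z_n}$. Since $t_1 = 2$, Macaulay's growth theorem in two variables forces $T$ either to be a staircase
\[
T_{k,m} := \bigl(1,\ \underbrace{2,\ldots,2}_{k},\ \underbrace{1,\ldots,1}_{m}\bigr), \qquad k \geq 1,\ m \geq 0,\ 1+2k+m = n,
\]
or to contain an entry $\geq 3$. I would parametrize each stratum $Z_T \subset \sxo{n}$ by the graded ideals with Hilbert function $T$ (an irreducible incidence variety of pencils of quadrics satisfying appropriate syzygy conditions) together with Gr\"obner-style non-homogeneous tail deformations, in the spirit of the explicit construction in the proof of Proposition~\ref{irredOnlyFor1nMoins1}. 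A direct parameter count then yields $\dim Z_{T_{k,m}} = n-2$ precisely when $m \geq 1$, while $Z_{T_{k,0}}$ and every stratum containing an entry $\geq 3$ have strictly smaller dimension.

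The number of staircases $T_{k,m}$ with $m \geq 1$ equals $\lfloor (n-2)/2 \rfloor = \lfloor n/2 \rfloor - 1$ (namely $k \in \{1, \ldots, \lfloor n/2 \rfloor - 1\}$, with $m = n-1-2k$). Since distinct Hilbert functions yield disjoint strata with pairwise incomparable closures, these give $\lfloor n/2 \rfloor - 1$ distinct irreducible components of $Y_n$ of dimension $n-2$. Combined with the curvilinear component, this produces exactly $\lfloor n/2 \rfloor$ components of $\sdeuxno$ of dimension $n-1$, as required.

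The main technical step is the sharp dimension count for the strata $Z_T$, specifically the dichotomy between $T_{k,m}$ with $m \geq 1$ (where each trailing $1$ leaves room in $R/I_{z_n}$ for a non-homogeneous Gr\"obner tail on each degree-$2$ generator, pushing $\dim Z_T$ up to $n-2$) and $T_{k,0}$ or $T$ with some $t_i \geq 3$ (where the absence of trailing $1$'s, or the vanishing of $I \cap R_2$, forces the ideal to be essentially graded in the relevant degree range and the dimension drops).
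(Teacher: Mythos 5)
Your proposal takes a genuinely different route from the paper. The paper does not stratify the non-curvilinear locus $Y_n$ by the Hilbert function of $\mathrm{gr}(\mathcal O_{z_n})$; instead it uses the Bialynicki--Birula decomposition of $\sno$ for the non-standard torus action $t\cdot(x,y) = (t^{k}x,\,ty)$ with $k\gg 0$. The payoff is that the cell dimensions come for free from Ellingsrud--Str\o mme \cite{ES}: the cell indexed by a partition $\bolda$ has dimension exactly $n-\lambda_1$. This reduces the proposition to a short chart computation (following \cite{Ev}) deciding, among the $\lfloor n/2\rfloor$ cells with $\lambda_1=2$, which ones consist generically of non-curvilinear schemes, namely those $\bolda_{a,b}=(2^a,1^{b-a})$ with $b-a\geq 2$. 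Your projection-$p$ analysis, the dichotomy between point-fibers and $\PP^1$-fibers, and the deduction that each relevant cell contributes an $(n-1)$-dimensional $\PP^1$-bundle are all structurally the same as in the paper; it is the \emph{source} of the cell-dimension formula that differs.

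Where your proposal has a genuine gap is precisely in the step you yourself flag as the main technical one: the assertion that $\dim Z_{T_{k,m}} = n-2$ if and only if $m\geq 1$, with every stratum containing an entry $t_i\geq 3$, as well as $Z_{T_{k,0}}$, strictly smaller. You write that ``a direct parameter count then yields'' this, but no such count is supplied, and this dichotomy carries essentially the entire content of the proposition. Carrying it out requires parametrizing the graded locus $G_T$ and then computing the dimension of the tail-deformation fiber $Z_T\to G_T$, in the style of Iarrobino's work on punctual Hilbert schemes in the plane; this is a real computation, not a one-liner. You also implicitly use the irreducibility of each $Z_T$ (to speak of ``the'' components of $Y_n$), which in $\K[x,y]$ is a theorem of Iarrobino and should be invoked or re-derived from your parametrization. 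I have checked your dimension dichotomy for $n=4,5,6,7$ and it is consistent, so the route would very likely succeed if executed in full; but as written, the central dimension estimate is unproven, and the paper's choice of the $(t^{k}x,ty)$-action with $k\gg 0$ rather than the graded $(tx,ty)$-action is exactly what lets it bypass this computation.
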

\begin{proof}
Consider the action of the torus $t.x=t^{k}x$ ($k>>0$) , $t.y=ty$
on
$\K[x,y]$ and the induced action on $\sno$. 
There is a Bialynicki-Birula decomposition of $\sno$ with respect to
this action. According to \cite[Proof of Proposition 4.2]{ES}, any cell is characterized by a 
partition of $n$, and the dimension of the cell with 
partition $\bolda=(\lambda_1\geqslant \dots\geqslant
\lambda_{d_{\bolda}})$ is $n-\lambda_1$. 
 
There is a
unique cell of dimension $n-1$ of $\sno$ and it is associated with the unique
partition $\bolda=(1,1,\dots,1)$ of $n$ with $\lambda_1=1$. Geometrically, this
cell parametrizes the curvilinear subschemes which intersect the
vertical line $y=0$ with multiplicity one. We call it the curvilinear cell 
and we denote it by $F_{curv}$.
There are $\lfloor n/2 \rfloor$ cells $F_{\bolda}\subset \sno$ of dimension 
$n-2$ corresponding to the partitions $\bolda$ with $n$ boxes and $\lambda_1 =2$ :  one has to take $\bolda=\bolda_{a,b}:=(2^a,1^{b-a})$, with
$b\geq a\geq 1$ and $a+b=n$. 

Following $\cite{Ev}$, we may be more explicit 
and describe the charts corresponding to the
Bialynicki-Birula strata. Since $\sxyo{2}{2}\cong\sxo{2}$ is
homeomorphic to $\mathbb{P}^1$,  where $(c:d)\in
\PP^1$ corresponds to the subscheme $z_2\in \sxo{2}$ with ideal $(cx+dy,x^2,y^2)$,
the
proposition is true for $n=2$ and we may suppose $n\geq 3$ .
If $b=a$, the 
Bialynicki-Birula stratum $F_{\bolda_{a,b}}$ is isomorphic to $Spec\ \K[c_{ij}]$ with 
universal ideal $(x^{a},y^2+\sum_{j\in
  \{0,1\},i\in\{1,\dots,a-1\}}c_{ij}x^iy^j)$. If $b>a$, the stratum is 
$Spec \ \K[c_i,d_i,e_i]$ with universal ideal $( x^{b},
yx^a+\sum_{i\in
  \{1,\dots,b-a-1\}}c_ix^{a+i},y^2+\sum_{i\in\{1,\dots,b-a-1\}}c_iyx^i+\sum_{i\in\{1,\dots,a-1\}}d_i(yx^i+\sum_{j\in
  \{1,\dots,b-a-1\}}c_jx^{i+j})+\sum_{i\in
  \{b-a,\dots b-1\}}e_ix^i$)

 There is at most one term of degree one in the generators of the ideal,
which appears when $(b-a=0,c_{10}\neq 0)$ or $(b-a=1,e_1\neq 0)$. 
In these cases, the corresponding point of the 
Bialynicki-Birula cell parametrizes a curvilinear scheme 
and it parametrizes a noncurvilinear scheme if $b-a \geq 2$ or $e_1=0$
or $c_{10}=0$. There are $\lfloor n/2
\rfloor-1$ partitions $\bolda_{a,b}$ with $b-a\geq 2$. 

Consider the projection $p:\sdeuxno \to \sno$ and $z_n\in \sno$. 
The fiber $p^{-1}(z_n)$ is set-theoretically a point if $z_n$ is
curvilinear. If $z_n$ is not curvilinear, the fiber 
is $\sxo{2}$ which is homeomorphic to $\PP^1$.

It follows that $p^{-1}(F_{curv})$ and $p^{-1}(F_{\bolda_{a,b}})$ with
$b-a\geq 2$ are
irreducible varieties of dimension $n-1$. 
There are $\left\lfloor \frac{n}{2} \right\rfloor$ such irreducible
varieties. To prove that their closures are irreducible components, 
note that $\sdeuxno$ is a proper subscheme of the $n$ dimensional
irreducible variety $\sno \times \sxo{2}$. In particular, 
any irreducible closed subvariety of dimension $n-1$ in $\sdeuxno$ is an irreducible component.

It remains to prove that there  are no other components. Let $L$ be a
component with dimension $n-1$. Since $\sxo{2}$ is one-dimensional,
the generic fiber of the projection
$L\rightarrow \sno$ has dimension $0$ or $1$ thus the projection
has dimension at least $n-2$. If the projection has dimension $n-1$,
then the generic point of $L$ maps to the generic point of the
curvilinear component for dimension reasons, and $L$ 
is the curvilinear component $\overline{p^{-1}(F_{curv})}$. 
If the projection has dimension
$n-2$, then the generic point of $L$ maps to  the generic point of a
Bialynicki-Birula cell of dimension $n-2$, $F_{\bolda_{a,b}}$, 
or to a non closed point of 
$F_{curv}$. Since  
the generic fiber has dimension $1$, 
the generic point of $L$ does not map to $F_{curv}$ nor to 
the generic point of  $F_{\bolda_{a,b}}$, $b-a\leq 1$. 
Hence $L$ is included in one of the
components $\overline{p^{-1}(F_{\bolda_{a,b}})}$  constructed above
with $b-a\geq 2$, and the equality follows from the equality of dimensions.
\end{proof}

\begin{rem}\label{remCodim2commeDim2}
  It is possible to prove along the same lines that $\sxno{n-2}$ has
  exactly $\lfloor n/2 \rfloor $ components of dimension $n-1$.  More
  precisely, the universal ideal    $( P_0=x^{b},
P_1=yx^a+\sum_{i\in
  \{1,\dots,b-a-1\}}c_ix^{a+i},P_2=y^2+\sum_{i\in\{1,\dots,b-a-1\}}c_iyx^i+\sum_{i\in\{1,\dots,a-1\}}d_i(yx^i+\sum_{j\in
  \{1,\dots,b-a-1\}}c_jx^{i+j})+\sum_{i\in
  \{b-a,\dots b-1\}}e_ix^i$)  over 
$F_{\bolda_{a,b}}$ with $b-a\geq 2$ as above defines a $n-2$
dimensional family of subschemes $z_n$ of length $n$. For each such
subscheme $z_{n}$, there is a one dimensional family of subschemes
$z_{n-2}(t)$ parametrized by $t$ with $z_{n-2}(t)\subset z_n$. In
coordinates $z_{n-2}(t)$ is defined by the ideal
$(P_0/x,P_1/x+tx^{b-1},P_2)$ which is well defined since $x$ divides
both $P_0$ and $P_1$. 
By the above, the component containing 
the couples $(z_{n-2},z_n)$ has dimension 
dimension $(n-2)+1=n-1$. Adding the curvilinear component, we obtain in this
way the $\lfloor n/2 \rfloor $ components of dimension $n-1$.
\end{rem}
\section{Hilbert schemes and commuting varieties}
\label{corres}
The goal of this section is to make precise the link between Hilbert schemes and commuting varieties in our context. More explicitly, we realize the Hilbert schemes $\sxno{n-k}$ and $\sxnoemb{n-k}$ as 
geometric quotients of the commuting varieties $\MNTilde^{cyc}(\pp_{k,n})$ and
$\MNTilde^{cyc}(\qq_{k,n})$ by the groups $P_{k,n}$ and $Q_{k,n}$
(Theorem \ref{thmcorres}). As a consequence, we point out a precise
connection between irreducible components of $\sxno{n-k}$
(resp. $\sxnoemb{n-k}$) and those of $\MN^{cyc}(\pp_{k,n})$
(resp. $\MN^{cyc}(\qq_{k,n})$) in Proposition \ref{corresirred}. 

We first introduce the notation to handle the commuting varieties.
Let $M_{n,k}$ be the space of $n\times k$ matrices with entries in $\K$ and let $M_n:=M_{n,n}$. The associative algebra $M_n$ will more often be considered as a Lie algebra $\g$ via  $[A,B]:=AB-BA$ and we will be interested in the action by conjugation of $G=\GL_n$ on it ($g\cdot X=gXg^{-1}$).
If $\wfr$ is a Lie subalgebra of $M_n$ and $X\in \wfr$, we denote the centralizer (also called commutant) of $X$ in $\wfr$ by
$$\wfr^X:=\{Y\in \wfr\,|\, [Y,X]=0\}.$$
The set of elements of $\wfr$ which are nilpotent in $M_n$ is denoted by $\wfr^{nil}$. 
We define the nilpotent commuting variety of $\wfr$:
$$\MN(\wfr)=\{(X,Y)\in (\wfr^{nil})^2\; | \; [X,Y]=0\}\subset \wfr\times\wfr.$$
If a subgroup $Q\subset G$ normalizes $\wfr$ then $Q^X$ is the stabilizer of $X\in \wfr$ in $Q$. 
The group $Q$ acts on $\MN(\wfr)$ diagonally ($q\cdot (X,Y)=(q\cdot X,q\cdot Y)$).

\begin{thm}\label{irrcomvar}
If $X^0$ denotes a regular nilpotent element of $M_n$, we have
$$\MN(M_n)=\overline{G\cdot(X^0,(M_n^{X^0})^{nil})}$$
In particular, the variety $\MN(M_n)$ is irreducible of dimension $n^2-1$
\end{thm}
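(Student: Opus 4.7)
The plan is to exhibit $V := G \cdot (X^0, (M_n^{X^0})^{nil})$ as an irreducible subvariety of $\MN(M_n)$ of dimension $n^2 - 1$, and then to conclude the equality by invoking the irreducibility of $\MN(M_n)$ (which in arbitrary characteristic is due to Premet).

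The inclusion $V \subset \MN(M_n)$ is immediate: $X^0$ is nilpotent by hypothesis, and every $Y \in (M_n^{X^0})^{nil}$ is both nilpotent and commutes with $X^0$; both properties are preserved under $G$-conjugation. Irreducibility of $V$ follows at once, since $V$ is the image of the irreducible variety $G \times (M_n^{X^0})^{nil}$ under the morphism $(g,Y) \mapsto (g X^0 g^{-1},\, g Y g^{-1})$.

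To compute $\dim V$, I use that $X^0$ is regular nilpotent, so $M_n^{X^0}$ equals the commutative algebra $\K[X^0]$, of dimension $n$, whose nilpotent part $(M_n^{X^0})^{nil}$ is the augmentation ideal spanned by $X^0,(X^0)^2,\ldots,(X^0)^{n-1}$, of dimension $n-1$. The stabilizer $G^{X^0}$ is the group of units of $\K[X^0]$, of dimension $n$, so $\dim G \cdot X^0 = n^2 - n$. Moreover, since $G^{X^0} \subset \K[X^0]$ is contained in the commutative algebra $M_n^{X^0}$, it acts trivially on $M_n^{X^0}$ by conjugation; hence the fibres of the conjugation map above are exactly cosets of $G^{X^0}$, of dimension $n$. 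Therefore $\dim V = (n^2 + n - 1) - n = n^2 - 1$.

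To upgrade $\overline{V} \subseteq \MN(M_n)$ to equality, I would cite the theorem of Baranovsky (characteristic zero) extended by Premet to arbitrary characteristic in \cite{Pr}, which asserts that $\MN(M_n)$ is irreducible of dimension $n^2 - 1$. Since $\overline{V}$ is an irreducible closed subvariety of $\MN(M_n)$ of the same dimension, equality follows, and the ``in particular'' clause is then automatic. The main obstacle in this argument is of course Premet's irreducibility theorem itself, whose proof rests on a delicate deformation argument: one must show that any $(X,Y) \in \MN(M_n)$ lies in the closure of the locus where the first coordinate is regular nilpotent. This is precisely the kind of characteristic-free input on which, as the introduction stresses, the entire paper relies, so we do not reprove it here.
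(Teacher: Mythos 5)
The paper gives no proof for this theorem --- it simply cites Baranovski, Basili, and Premet (the last covering arbitrary characteristic) --- and your proposal likewise defers all the hard content to Premet's irreducibility result. The easy supporting computations you supply (irreducibility of $V$; $\dim V = n^2-1$ via $M_n^{X^0}=\K[X^0]$, $\dim (M_n^{X^0})^{nil}=n-1$, and the triviality of the conjugation action of the commutative group $G^{X^0}$ on $M_n^{X^0}$ so that the fibres of $(g,Y)\mapsto(gX^0g^{-1},gYg^{-1})$ have constant dimension $n$; then comparison of dimensions) are all correct, so this is a correct treatment in essentially the same spirit as the paper.
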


Recall that an element $X\in M_n$ is said to be regular if it has a cyclic vector, \emph{i.e.} an element $v\in \K^n$ such that $\langle X^k(v)|k\in \N\rangle=\K^n$. This easily implies, and is in fact equivalent to, $\dim G^X(=\dim M_n^X)=n$. There is only one regular nilpotent orbit. This is the orbit of nilpotent elements having only one Jordan block.

This theorem was first stated in \cite{Bar} using the correspondence
with Hilbert schemes (with a small correction in the proof of lemma 3, see MathReviews 1825165). 
We can find other proofs of this theorem in \cite{Bas03} and \cite{Pr}. In \cite{Pr}, the result is proved whithout any assumption on $\textrm{char } \K$.

Let $V=\K^n$ and $(e_1, \dots, e_n)$ be its canonical basis.  We will identify $M_n$ with $\gl(V)$, the set of endomorphisms
of $V$, thanks to this
basis. For $1\leqslant i\leqslant n$, let $V_i=\langle e_1,\dots
e_i\rangle$.
We define $\pp_{k,n}$ (resp. $\qq_{k,n}$)  as the set of matrices
$X\in\gl(V)$ such that  $X(V_k)\subseteq V_k$ (resp. $X(V_i)\subseteq V_i$
for all $1\leqslant i\leqslant k$). 
Given $X\in\pp_{k,n}$, we denote by $X^{(k)}$ the linear map induced by $X$ on $V/V_k$. 
Let $P_{k,n}\subset\GL_n$ (resp. $Q_{k,n}\subset \GL_n$) be the
algebraic group of invertible matrices of $\pp_{k,n}$ (resp. $\qq_{k,n}$). 
In the Lie algebra vocabulary, $P_{k,n}$ and $Q_{k,n}$ (resp. $\pp_{k,n}$ and $\qq_{k,n}$)
are parabolic subgroups of $\GL_n$ (resp. parabolic subalgebras of
$\gl(V)$) and $\Lie (P_{k,n})=\pp_{k,n},\,
\Lie(Q_{k,n})=\qq_{k,n}$. In fact, all the content of this section can
easily be generalized to any parabolic subalgebra of $\gl(V)$ and a
corresponding nested Hilbert scheme. Namely, the parabolic subalgebra stabilizing a partial flag $F_0\subset F_{k_1}\subset \dots \subset F_{k_{\ell}}\subset F_n$ ($\dim F_{j}=j$) is in correspondence with the nested Hilbert scheme with length $n-k_{\ell}\leqslant \dots \leqslant n-k_{1}\leqslant n$. 

In Definition \ref{defFoncteurNCyc} and Proposition \ref{defFoncteurNCyc2}, we define a scheme $\MNTilde^{cyc}(\wfr)$, whose $\K$-points are the triples $(X,Y,v)$ with
$(X,Y)\in \MN(\wfr)$ and $v\in V$ 
 is a cyclic vector for the pair of endomorphisms $X,Y$. 

In Section \ref{Definitions_func}, we also describe an action of the group $P_{k,n}$ (resp. $Q_{k,n}$) on the scheme $\MNTilde^{cyc}(\pp_{k,n})$ (resp. $\MNTilde^{cyc}(\qq_{k,n})$).  Set-theoretically, this action is given by ${g\cdot} (X,Y,v)=(gXg^{-1},gYg^{-1}, gv)$.
The following theorem asserts that a $GIT$ quotient in the sense of
Mumford \cite{Mu} exists, and that
the quotients are nested punctual Hilbert schemes. 

\begin{thm}\label{thmcorres}
  \begin{enumerate}
  \item 
  The geometric quotients $\quo:\MNTilde^{cyc}(\pp_{k,n})\twoheadrightarrow
  \MNTilde^{cyc}(\pp_{k,n})/P_{k,n}$ and $\quoprime:\MNTilde^{cyc}(\qq_{k,n})\twoheadrightarrow
  \MNTilde^{cyc}(\qq_{k,n})/Q_{k,n}$ exist and they are principal
  bundles  locally
  trivial for the Zariski topology. 
\item There exist surjective morphisms 
\begin{displaymath}
\widetilde{\pi}_{k,n}:\MNTilde^{cyc}(\pp_{k,n})\twoheadrightarrow
      \sxno{n-k},
\end{displaymath}
\begin{displaymath}
\widetilde{\pi}'_{k,n}:\MNTilde^{cyc}(\qq_{k,n})\twoheadrightarrow \sxnoemb{n-k}.
\end{displaymath}
\item There exist isomorphisms $\isompHilb:\MNTilde^{cyc}(\pp_{k,n})/P_{k,n} \stackrel{\sim}{\to}
  \sxno{n-k}$ and  $\isomqHilb:\MNTilde^{cyc}(\qq_{k,n})/Q_{k,n} \stackrel{\sim}{\to}
  \sxnoemb{n-k}$. These isomorphisms identify the projections to 
the Hilbert schemes with the geometric quotients, \emph{i.e.}
  $\isompHilb \circ \quo=\widetilde{\pi}_{k,n}$ and  $\isomqHilb \circ \quoprime=\widetilde{\pi}'_{k,n}$.
\end{enumerate}
\end{thm}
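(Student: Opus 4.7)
The plan is to construct the morphisms $\widetilde{\pi}_{k,n}$ and $\widetilde{\pi}'_{k,n}$ explicitly, check that they are $P_{k,n}$- (resp. $Q_{k,n}$-) invariant with fibers equal to orbits, and then identify the Hilbert schemes with the corresponding quotients in the sense of Mumford's GIT, following the quiver variety framework invoked by the authors (see \cite{Gi}).

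First, given $(X,Y,v)\in\MNTilde^{cyc}(\pp_{k,n})$, the surjective map $\psi\colon\K[x,y]\to V$, $P\mapsto P(X,Y)v$ has kernel $I$ of colength $n$; nilpotency of $X,Y$ forces $I\supset(x^N,y^N)$ for $N\gg 0$, so $V(I)\in\sno$. Because $X,Y\in\pp_{k,n}$ preserve $V_k$, the subspace $V_k$ is a submodule of $V$ for multiplication by $X,Y$; hence $J:=\psi^{-1}(V_k)$ is an ideal of $\K[x,y]$ of colength $n-k$ containing $I$, giving $V(J)\in\sxo{n-k}$ with $V(J)\subset V(I)$. Setting $\widetilde{\pi}_{k,n}(X,Y,v):=(V(J),V(I))$ defines the map on closed points, and the same recipe in families---using the functorial description of the nested Hilbert scheme recalled in Section \ref{sec:descriptionsFonctorielles}---promotes it to a morphism of schemes. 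For $\widetilde{\pi}'_{k,n}$ one applies $\psi^{-1}$ to the full flag $V_1\subset\dots\subset V_k$, obtaining an entire chain $J_1\supset\dots\supset J_k\supset I$ and hence a point of $\sxnoemb{n-k}$. This yields part (2).

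Invariance under $P_{k,n}$ is immediate from the identity $P(gXg^{-1},gYg^{-1})(gv)=gP(X,Y)v$ (so the kernel is unchanged) together with $gV_k=V_k$ (so $J$ is unchanged); the same applies to $Q_{k,n}$ and the flag $V_\bullet$. The $P_{k,n}$-action on $\MNTilde^{cyc}(\pp_{k,n})$ is free: if $g\cdot(X,Y,v)=(X,Y,v)$, then $g$ commutes with $X,Y$ and fixes $v$, so it fixes every $P(X,Y)v$, and cyclicity gives $g=\mathrm{id}$. Surjectivity of $\widetilde{\pi}_{k,n}$ follows by reversing the construction: for $(z_{n-k},z_n)\in\sxno{n-k}$ with ideals $J_n\subset J_{n-k}$, choose a basis of $V=\K[x,y]/J_n$ whose first $k$ vectors span $J_{n-k}/J_n$; then multiplications by $x,y$ and the class of $1$ provide a preimage in $\MNTilde^{cyc}(\pp_{k,n})$. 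Moreover, two preimages $(X,Y,v)$, $(X',Y',v')$ of the same target determine a unique $g\in\GL_n$ sending $v$ to $v'$ and intertwining $X,Y$ with $X',Y'$, well-defined because $\ker\psi=\ker\psi'=I$; since $\psi(J)=\psi'(J)=V_k$, this $g$ stabilises $V_k$, so $g\in P_{k,n}$. Hence the fibers of $\widetilde{\pi}_{k,n}$ are precisely $P_{k,n}$-orbits, and analogously for $Q_{k,n}$ and $\widetilde{\pi}'_{k,n}$.

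The main obstacle is part (1), since $P_{k,n}$ and $Q_{k,n}$ are not reductive, so Mumford's GIT does not apply directly. The way around this, standard in the quiver variety framework of King/Nakajima (see \cite{Gi}), is to realise cyclicity as a stability condition: choose a character $\chi$ of $P_{k,n}$ (a suitable product of determinants of the Levi blocks) and consider $\MN(\pp_{k,n})\times V$ embedded in a $P_{k,n}$-linearised projective closure; then $\chi$-semistability coincides with $\chi$-stability and with cyclicity. Mumford's GIT yields a quasi-projective quotient $\quo$, which is automatically a geometric quotient because the action is free on the stable locus; the same argument yields $\quoprime$. Combined with the previous paragraph, the $P_{k,n}$-invariant morphism $\widetilde{\pi}_{k,n}$ factors uniquely through $\quo$ as a bijective morphism $\isompHilb\colon\MNTilde^{cyc}(\pp_{k,n})/P_{k,n}\to\sxno{n-k}$. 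To upgrade this to an isomorphism, one constructs étale-local sections of $\widetilde{\pi}_{k,n}$ over $\sxno{n-k}$ by picking a relative basis of the universal quotient $\K[x,y]/\mathcal{I}$ adapted to the universal sub-ideal (for instance via Gr\"obner staircases on each chart), realising $\widetilde{\pi}_{k,n}$ as an étale-locally trivial principal $P_{k,n}$-bundle; this makes $\isompHilb$ an isomorphism. The same argument yields $\isomqHilb$, completing (3).
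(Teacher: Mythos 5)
Your overall shape matches the paper's: define $\widetilde{\pi}_{k,n}$ via the kernels $\ker\psi$ and $\psi^{-1}(V_k)$, check $P_{k,n}$-invariance and freeness, verify that the fibers of $\widetilde{\pi}_{k,n}$ are exactly orbits, and then identify the Hilbert scheme as the quotient. Your argument that two triples with the same image differ by a unique $g\in P_{k,n}$ (using $\ker\psi=\ker\psi'$ and $\psi(J)=\psi'(J)=V_k$) is essentially the paper's argument for ``fibers are orbits,'' and the surjectivity-via-adapted-basis argument is also the one the paper uses.

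However, the central step of your part (1) has a genuine gap. You propose to \emph{obtain} the quotient $\quo$ from Mumford's GIT, with cyclicity realised as $\chi$-stability for a character $\chi$ of $P_{k,n}$. But Mumford's machinery (and King's character-twisted variant invoked in the quiver setting) is built for \emph{reductive} groups: finite generation of the graded ring of $\chi$-semi-invariants, existence of the good quotient, and the identification of the stable locus all rely on reductivity. Here the group is a parabolic subgroup $P_{k,n}$ (or $Q_{k,n}$), which is not reductive, and the paper explicitly flags this: ``Since the group is not reductive, $A_\Delta^{P_{k,n}}$ is not a priori finitely generated and we cannot apply \cite[Thm~1.1]{Mu}.'' So ``Mumford's GIT yields a quasi-projective quotient'' is exactly the step you are not entitled to. The Nakajima/King framework you cite does produce Hilbert and nested Hilbert schemes as GIT quotients, but by a \emph{reductive} gauge group (a product of $\GL$'s on a quiver with several vertices), not by a parabolic acting on $\MN(\pp_{k,n})\times V$.

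The paper's way around this is precisely to avoid appealing to any general quotient theorem: it covers $\MNTilde^{cyc}(\pp_{k,n})$ by the open affine $P_{k,n}$-stable charts $\MNTilde^{cyc}_\Delta$, defines the candidate local quotient as $Spec\ A_\Delta^{P_{k,n}}$, and then \emph{directly constructs a mutually inverse pair of morphisms} $\isompHilb_\Delta$ and $\rho_\Delta$ between this candidate and the Hilbert scheme chart $S_\Delta$. This both proves that $A_\Delta^{P_{k,n}}$ is finitely generated (so the local quotient is a scheme of finite type) and identifies it with the Hilbert scheme in one stroke, after which Mumford's Amplification 1.3 (which only needs closed orbits, not reductivity) upgrades the categorical quotient to a geometric one. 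Your closing idea — realise $\widetilde{\pi}_{k,n}$ as a locally trivial $P_{k,n}$-bundle by choosing adapted bases — is in fact the correct substitute for the GIT step, and if you develop it carefully (the key technical point being the basis adjustment $b_i\mapsto b_i+\sum_{j\geq k+1}a_{ij}b_j$ so that $I/J$ is spanned by the first $k$ basis vectors and the resulting matrices actually lie in $\pp_{k,n}$) you recover the paper's $\rho_\Delta$; but then you no longer need, and should not invoke, Mumford's GIT for the non-reductive group.
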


\subsection{Functorial definitions}
\label{sec:descriptionsFonctorielles}
Hilbert schemes are often defined through their functor of points (see
\cite{EH} or \cite{St} for an introduction). We will use this setting
to prove Theorem \ref{thmcorres}. 
A useful example for us is the functor of points of the $\K$-vector
space $V$. This is the functor which associates 
\begin{itemize} 
\item to any $\K$-algebra $A$, the set $V(A):=V\otimes_{\K}A\cong A^n$.
\item to any morphism $A \rightarrow B$, the natural map 
 $V(A) \rightarrow V(B)=V(A)\otimes_A B$,
  $v\mapsto v\otimes 1$
\end{itemize}
In particular, the functor represented by $M_n$
(resp. $V_{k}$, $\pp_{k,n}$) associates to any $\K$-algebra $A$, the
set $M_n(A)$ of $n\times n$-matrices with coefficients in $A$
(resp. $V_k(A):=V_{k}\otimes A\subset V(A)$, $\pp_{k,n}(A):=\{X\in
M_n(A)\, |\, X(V_k(A))\subset V_k(A)\}$), see \cite[Example 2.1]{St}.
In the following, we will usually only make explicit the value of the functors on objects, their value on morphisms then being  standard. 
For more involved examples, the notion of relative representability turrns out to be useful.

\subsubsection{Relative representability}
We recall from  \cite{Gro2} the notion of relatively representable
morphism of functors, with some obvious adjustments to fit our
context. We will use this language to prove the representability of our
functors. 

Let $F,G$ be functors from the category of $\K$-algebras 
to sets. Suppose that $F$ is a subfunctor of $G$, ie. for every 
$\K$-algebra $A$, $F(A)$ is a subset of $G(A)$. The inclusion $F\subset G$ 
is relatively representable if, for every $\K$-algebra $A$ and every $g\in G(A)$ , there exists a 
subscheme $Z\subset Spec(A)$ satisfying the following property: 
for every $\phi:A\rightarrow B$, the morphism 
$Spec(B)\rightarrow Spec(A)$ factorizes through $Z$
if and only if the element $f\in G(B)$ defined by  $f=\phi_*(g)$
satisfies $f\in F(B)$. Grothendieck,  \cite[Lemme 3.6]{Gro2} proves that if $G$ is
representable and if $F\subset G$ is relatively representable, then 
$F$ is representable. 

In intuitive words, a relatively representable 
subfunctor $F\subset G$ is a subfunctor of $G$ defined by subscheme conditions on the
base. We illustrate this through the following elementary lemma.
\begin{lm}\label{def:Pkn}
The functor which maps a $\K$-algebra $A$ to the set $P_{k,n}(A):=\{X\in \pp_{k,n}(A)\, |\, \det X\textrm{ is invertible}\}$ is representable. The corresponding scheme is $P_{k,n}$.
\end{lm}
\begin{proof}
In the previous setting, we let $G(A):=\pp_{k,n}(A)$ and
$F(A):=P_{k,n}(A)$. Given $A$ and $g\in G(A)$, we set $Z:=\{p\in
Spec(A)| \det g\notin p \}$. Obviously, $Z$ is an open subscheme of
$Spec(A)$. For every $\phi:A\rightarrow B$, we consider the element
$f:=\phi_*(g)\in G(B)$. We have $f\in F(B)\Leftrightarrow \det
\phi_*(g)=\phi(\det g) \textrm{ is invertible }\Leftrightarrow \forall
p\in Spec(B)\, \det g\notin \phi^{-1}(p)$, that is, the comorphism
$Spec(B)\rightarrow Spec(A)$ factorizes through $Z$. In particular,
$F\subset G$ is relatively representable, hence $F$ is representable
by a subscheme of  $\pp_{k,n}$. The $\K$-points of
$F$ are those of the open subscheme $P_{k,n}\subset \pp_{k,n}$. Hence 
$P_{k,n}$  with the open subscheme structure represents $F$.
\end{proof}

This also applies when $F$ is a subfunctor of $G$ 
defined by the inclusion of two families 
according to the following lemma, proved in \cite[Lemma 1.1]{Kee}.

\begin{lm}\label{inclusionIsClosedCondition}
  Let $X\subset Spec(A)\times W$, $Y\subset Spec(A)\times W$ be two families of
  subschemes of a scheme $W$  with $X$ finite and flat over $Spec (A)$. There exists a
  subscheme $Z\subset Spec(A)$ such that, for every morphism
  $f:Spec(B)\rightarrow Spec(A)$, the following two conditions are
  equivalent:
  \begin{listecompacte}
    \item $f$ factorizes through $Z$
    \item $X \times_{Spec(A)}Spec(B) \subset  Y \times_{Spec(A)}Spec(B)$
  \end{listecompacte}
\end{lm}

\begin{prop}\label{foncteurHilbertEstRepresentable}
Let $n_1\geq n_2\dots \geq n_j > 0$ be integers. 
Let $F^{n_1,\dots,n_j}$ be the functor from
$\K$-algebras 
to sets defined by $F^{n_1,\dots,n_j}(A)=\{(I_1,\dots,I_j)\}$ where
\begin{listecompacte}
\item for every $i$, $I_i \subset A[x_1,\dots,x_d]$ is an  ideal,
\item $A[x_1,\dots,x_d]/I_i$ is locally free on $A$ of rank
  $n_i$,
\item $(x_1,\dots,x_d)^{n_i}\subset I_i$,
\item $I_1\subset I_2 \dots \subset I_j$.
\end{listecompacte}
Then $F^{n_1,\dots,n_j}$ is representable. 
\end{prop}
\begin{proof}
  For $j=1$, the functor $F^{n_1}$ parametrizes
  families of punctual subschemes of length $n_1$ in the closed
  subscheme $W$ defined by 
  the ideal $(x_1,\dots,x_d)^{n_1}$ in the affine space $Spec \ \K[x_1,\dots,x_d]$. 
It follows that this functor is
  representable by the Hilbert scheme $W^{[n_1]}=W_0^{[n_1]}$
  We then proceed by induction. Let $G^{n_1,\dots,n_j}$ be the functor defined
  similarly to $F^{n_1,\dots,n_j}$, except that we replace the condition $I_1\subset
  I_2 \dots \subset I_j$ with
  the condition  $I_1\subset I_2 \dots \subset I_{j-1}$. The functor
  $G^{n_1,\dots,n_j}$ is representable by $X_1\times X_2$,
 where $X_1$
  represents $F^{n_1,\dots,n_{j-1}}$, well defined by induction, and
  $X_2$ represents  $F^{n_j}$. The inclusion of functors
  $F^{n_1,\dots,n_j}\subset G^{n_1,\dots,n_j}$ is defined by the extra
  condition  $I_{j-1}\subset I_j$. According to the last lemma
  \ref{inclusionIsClosedCondition}, 
  this
  corresponds to a subscheme condition on the base of the families,
  ie. $F^{n_1,\dots,n_j}\subset G^{n_1,\dots,n_j}$ is relatively representable. It follows that  $F^{n_1,\dots,n_j}$ is representable.
\end{proof}

\subsubsection{Definitions}
\label{Definitions_func}
The functorial 
description of the Hilbert scheme $\sn$ is classical, but we need to 
precise the functorial description of $\MNTilde^{cyc}$ and 
of the variants $\sno, \skno$ of the Hilbert scheme that we use.

Consider the Hilbert-Chow morphism $\sn \rightarrow
Sym^n(\mathbb{A}^2)$, and compose it with the natural 
map $Sym^n(\mathbb{A}^2) \rightarrow Sym^n(\mathbb{A}^1)\times
Sym^n(\mathbb{A}^1)$.
We obtain a morphism 
$\rho:\sn \rightarrow Sym^n(\mathbb{A}^1)\times Sym^n(\mathbb{A}^1)$
which set-theoretically sends a subscheme $z_n$ to the tuples of
coordinates $(\{x_1,\dots,x_n\},\{y_1,\dots,y_n\})$  where $(x_i,y_i)$
are the points of $z_n$ counted with multiplicities.
A morphism $Spec\ R\rightarrow \sn$ factorizes through
$\rho^{-1}(0,0)$ if the corresponding ideal $I(Z) \subset R[x,y]$ 
satisfies $(x^n,y^n)\in I(Z)$. However, this property gives a special
status to the lines $x=0$ and $y=0$ as shown by the following
example, whose verification is straightforward. 

\begin{ex}
  Let $R=\K[a,b]/(ab,b^2)$ and $I=(y+ax+b,x^2)\subset R[x,y]$.
Then
  $ x^2\in I$, $y^2\in I$, but for any $t\in \K^*$, $(x+ty)^2\notin
  I$. 
\end{ex}

Consequently, we do not define $\sno$ as being $\rho^{-1}(0,0)$  
and we ask for a coordinate-free definition. The dimension of the 
ambient space $S$ plays no role in the definition. We shall give a 
general definition for the Hilbert scheme  $\zno$ parametrizing  
subschemes $z_n$ of length $n$ in a scheme $Z$ of any dimension $d$ 
supported on a smooth point $o\in Z$.

For this, we recall the well-known remark that a subscheme $z_n$ of
length $n$ in a scheme $Z$ is supported on a smooth point $o\in Z$ if
and only if $I(o)^n \subset I(z_n)$, ie. if $z_n$ is a subscheme of
$Spec\, \K[x_1,\dots,x_d]/(x_1,\dots,x_d)^n$ where $d$ is the dimension of $Z$
at $o$. This leads to the following definitions for the 
localized Hilbert scheme $\zno$ and the localized nested Hilbert
scheme $\zoemb$. Of course, they include our two main objects of study
$\skno$ and $\sknoemb$ with $Z=S=\mathbb A^2$ and $o=(0,0)$.

\begin{defi}
\label{defFoncteurHilbertLocal}
Let $Z$ be a scheme over $\K$, $o\in Z$ a smooth point such that the
local dimension of $Z$ at $o$ is $d$. 
 The Hilbert scheme $\zno$  is the scheme that represents the functor $F^{n}$ 
of Proposition \ref{foncteurHilbertEstRepresentable}. 
Let  $n_1\geq n_2\dots \geq n_j > 0$ be integers. 
The Hilbert
scheme $\zoemb$ is the scheme which represents the functor 
$F^{n_1,n_2,\dots,n_j}$.
\end{defi}

As long as we consider topological properties, a superscript $1$ plays no
role since the schemes  $\sxno{1}$ and $\sno$ are homeomorphic. In
fact, the following proposition shows they are even isomorphic as varieties.
\begin{prop}\label{s1no_var}Let  $(\sxno{1})_{red}$ and $(\sno)_{red}$ 
  be the varieties obtained from $\sxno{1}$ and $\sno$ with the reduced
  induced closed subscheme structure.  Then  $(\sxno{1})_{red}\cong(\sno)_{red}$ 
\end{prop}
\begin{proof}
  The functor $F^{n,1}$ associated with $\sxno{1}$ is defined by 
$F^{n,1}(A)=\{(I_1,I_2)\subset A[x,y]$ with $(x,y)^n\subset I_1\subset I_2$, $(x,y)\subset I_2$, $A[x,y]/I_1$ locally free of rank
$n$, $A[x,y]/I_2$ locally free of rank
$1\}$. In particular, $I_2=(x,y)$ is the only possibility. In other
words, if  $F^n$ denotes the functor
associated with $\sno$, then  $F^{n,1}$ can be seen as a subfunctor of $F^n$
defined by the condition $I_1\subset (x,y)$.
By Keel's lemma~\ref{inclusionIsClosedCondition}, this inclusion is
relatively representable and  $\sxno{1}$ is a closed subscheme of
$\sno$. When $A=\K$, $\K[x,y]/(x,y)^n$ is a local ring with maximal
ideal $(x,y)$. It follows that the inclusion $I_2 \subset (x,y)$ is always
satisfied or equivalently, that the embedding $\sxno{1}\subset \sno$ identifies the
$\K$-points on both sides. This proves the proposition.
\end{proof}

\begin{defi} \label{defFoncteurNCyc}
Let
  $A$ be a $\K$-algebra. Let $V(A)$, $V_k(A)$ and $\pp_{k,n}(A)$ be as in the beginning of Section \ref{sec:descriptionsFonctorielles}. Consider the functor $m$ from
  $\K$-algrebras to sets where $m(A)$ is 
\[\left\{\begin{array}{l}(X,Y,v)\in\\  \pp_{k,n}(A)\times \pp_{k,n}(A)\times V(A)\end{array}\!\!\left|\begin{array}{c}
[X,Y]=0,\; X^n=X^{n-1}Y=...=Y^n=0,\\ 
  (X^{(k)})^{n-k}=\cdots=(Y^{(k)})^{n-k}=0 \textrm{ on } V/V_k(A)\\  
 ev_n\textrm{ and }ev_{n-k} \textrm{ are surjective} \end{array}\right.\right\}\]
where \[ev_n:\func{A[x,y]}{V(A)\simeq A^n}{P(x,y)}{P(X,Y)(v)} \textrm{ and }  
ev_{n-k}:\func{A[x,y]}{V(A)/V_{k}(A)\simeq A^{n-k}}{P(x,y)}{P(X,Y)(v)+ V_k(A)}\] are the natural evaluation morphisms.
\end{defi}

\begin{prop}(Functorial definition of
  $\MNTilde^{cyc}(\pp_{k,n})\subset \pp_{k,n}\times \pp_{k,n}\times
  V$).\label{defFoncteurNCyc2}
  The functor $m$ is representable by a scheme  $\MNTilde^{cyc}(\pp_{k,n})$. 
\end{prop}
\begin{proof} 
We give a sketch of the proof. 
Let $m'$ be the functor given by the same conditions as $m$
except the surjectivity of $ev_n$ and $ev_{n-k}$.
In view of \cite[Example~2.1]{St}, $m'$
is representable by a
closed affine subscheme of $\pp_{k,n}\times\pp_{k,n}\times V$. 
Then, the inclusion $m\subset m'$ is defined by surjectivity
conditions, or equivalently by the invertibility of
some determinant. It follows that this inclusion of functors is
relatively representable, using the same argument as in the proof of Lemma~\ref{def:Pkn}. 
\end{proof}

The first point of the following lemma shows that the closed points of 
$\MNTilde^{cyc}(\pp_{k,n})$ are the expected triples
$(X,Y,v)$. Since, on $\K$-points, we require $X$ and $Y$ to be
nilpotent, it could seem natural in the above
definition of the functor $A \mapsto m(A)$ to replace the condition $X^n=X^{n-1}Y=...=Y^n=0$ with 
the simpler condition $X^n=Y^n=0$.
The second point of the lemma shows that this would 
add extra embedded
components to $\MNTilde^{cyc}(\pp_{k,n})$
and we are not interested in these components. 

\begin{lm}
(i) Let $X,Y\in M_n(\K)$ be a pair of nilpotent commuting matrices. Then $X^iY^{n-i}=0$ for all $i\in[\![0,n]\!]$.\\
(ii) The above conclusion may fail when replacing $\K$ by an arbitrary (even noetherian) $\K$-algebra $R$.
\end{lm}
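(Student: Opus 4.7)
My plan for (i) is to simultaneously strictly triangularize the pair $(X,Y)$: concretely, I would produce a complete flag $0 = V_0 \subset V_1 \subset \cdots \subset V_n = V$ with $\dim V_i = i$ and $X(V_i), Y(V_i) \subset V_{i-1}$ for every $i$. Granted such a flag, any word $Z_1 \cdots Z_n$ with each $Z_j \in \{X,Y\}$ lowers the flag index by one at each application, hence kills $V = V_n$; in particular $X^i Y^{n-i} = 0$ for every $i \in [\![0,n]\!]$.

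The flag is built by induction on $n$, the only nontrivial step being to produce a nonzero vector in $\ker X \cap \ker Y$. Nilpotence of $X$ forces $\ker X \ne 0$; since $Y$ commutes with $X$ it preserves $\ker X$, and its restriction to $\ker X$ is again nilpotent, hence has nonzero kernel. Choose any nonzero $v \in \ker X \cap \ker Y$, set $V_1 = \K v$, and apply the induction hypothesis to the commuting nilpotent pair induced on $V/V_1$; pulling back the resulting flag to $V$ completes the construction. No algebraic closedness is needed, since nilpotent operators admit only the eigenvalue $0$.

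For (ii), I would exhibit a concrete counterexample over the dual-number algebra $R = \K[\epsilon]/(\epsilon^2)$, which is noetherian. Take $n=2$ and set
\begin{displaymath}
X = \begin{pmatrix} 0 & 1 \\ 0 & 0 \end{pmatrix}, \qquad Y = \epsilon I_2 \in M_2(R).
\end{displaymath}
Then $X^2 = 0$, $Y^2 = \epsilon^2 I_2 = 0$, and $[X,Y] = 0$ since $Y$ is scalar, yet $XY = \epsilon X$ has $(1,2)$-entry $\epsilon \ne 0$ in $R$. Thus $X^1 Y^{n-1}$ does not vanish, contradicting the conclusion of (i) in this generality. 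The conceptual obstacle worth flagging is that the joint-kernel step of (i) has no ring-theoretic analog: the element $Y = \epsilon I_2$ is nilpotent only by virtue of $\epsilon^2 = 0$, and its image in each residue field of $R$ is zero, so no simultaneous-triangularization strategy can be ported. This failure is precisely the motivation for imposing the full family of relations $X^iY^{n-i}=0$ in the functorial definition of $\MNTilde^{cyc}$ rather than the weaker $X^n=Y^n=0$.
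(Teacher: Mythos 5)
Your proof is correct on both counts. For (i) you take exactly the paper's route --- simultaneous strict upper-triangularization of the commuting nilpotent pair --- but where the paper merely invokes this as ``an elementary fact from reduction theory,'' you supply the actual induction: intersect kernels, quotient, and recurse. Your observation that algebraic closedness is unnecessary (nilpotent operators already have only the eigenvalue $0$) is a small but genuine sharpening of what the paper leaves implicit. For (ii) you choose a different and cleaner counterexample: the paper works over $R=\K[a,b]/(ab,b^2)$ with
\begin{displaymath}
X=\begin{pmatrix}0&0\\1&0\end{pmatrix},\qquad Y=\begin{pmatrix}b&0\\a&b\end{pmatrix},\qquad XY=YX=\begin{pmatrix}0&0\\b&0\end{pmatrix}\neq 0,
\end{displaymath}
whereas you use the dual numbers $R=\K[\epsilon]/(\epsilon^2)$ with $Y=\epsilon I_2$. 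Both satisfy the hypotheses $X^2=Y^2=[X,Y]=0$ with $XY\neq 0$, and both $R$'s are noetherian; yours is arguably the minimal such example, at the cost of $Y$ being central (which makes the commutation with $X$ trivial, whereas the paper's $Y$ is not scalar). Either way the point the lemma makes --- that $X^n=Y^n=0$ is strictly weaker than $X^iY^{n-i}=0$ for all $i$ once one passes from fields to rings with nilpotents, so the functorial definition of $\MNTilde^{cyc}$ must impose the full family of relations --- is established, and your closing remark correctly ties the counterexample back to the failure of the joint-kernel argument over a ring.
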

\begin{proof}
(i) From reduction theory, it is an elementary fact that $X$ and $Y$
are simultaneously strictly upper trigonalisable. Hence the
equalities. 
\\
(ii) Take $R=\K[a,b]/(ab,b^2)$, $X=\left(
    \begin{array}{cc}
0 & 0\\
1& 0
    \end{array}
\right )$, $Y=\left(
  \begin{array}{cc}
b& 0\\
a& b
  \end{array}
\right )$. Then $X^2=Y^2=0$ and $XY=YX=\left (
  \begin{array}{cc}
0 & 0\\
b& 0
  \end{array}
\right )$.
\end{proof}

Finally, we can define the action of $P_{k,n}$ on $\MNTilde^{cyc}(\pp_{k,n})$, \emph{i.e.} the morphism $\gamma: P_{k,n}\times  \MNTilde^{cyc}(\pp_{k,n})\to  \MNTilde^{cyc}(\pp_{k,n})$ at 
  the functorial level. Let
  $g\in P_{k,n}(A)$,
$t=(X,Y,v)\in m(A)$ so $(g,t)\in Hom(Spec\ A,P_{k,n}\times
  \MNTilde^{cyc}(\pp_{k,n}))$. Then the element $t'=(X',Y',v')\in
  m(A)$, image of $(g,t)$ by the action morphism $\gamma$, is
  $X'=gXg^{-1},Y'=gYg^{-1},v'=gv$.

\subsection{The Hilbert scheme as a geometric quotient }
\label{sec:proof-theorem-ref}

In this section, we prove Theorem \ref{thmcorres}

  The cases of $\MNTilde^{cyc}(\pp_{k,n})$ and  $\MNTilde^{cyc}(\qq_{k,n})$ are
  similar and we consider only the first case. The strategy is
  the following. We first construct a categorical quotient. Using the 
  functorial properties of both the categorical quotient and the Hilbert
  scheme, we construct the
  isomorphism between $\MNTilde^{cyc}(\pp_{k,n})/P_{k,n}$ and
 $\sxno{n-k}$.
  Finally, using the description of the quotient via the Hilbert
  scheme, we show that the categorical quotient turns out to be a
  geometric quotient.



  Let $\Delta_{n-k}\subset \Delta_n$ be two sets of monomials
  $\{\delta_i
=x^{\alpha_i}
  y^{\beta_i}\}$ of respective cardinality $n-k$ and $n$. Let
  $\Delta=\{\Delta_{n-k},\Delta_n\}$. For each such $\Delta$, there is
an open subscheme $\MNTilde^{cyc}_\Delta\subset
\MNTilde^{cyc}(\pp_{k,n})$ whose support 
is the
locus where the evaluation morphisms $ev_{n-k}$ and $ev_n$ are surjective
using only the images of the monomials in $\Delta$. More precisely,
let $A[\Delta_i]$ be the free $A$-module with  basis $\Delta_i$.
The open subscheme $\MNTilde^{cyc}_\Delta$ corresponds to the subfunctor $m_\Delta(A)\subset m(A)$
containing the triples $(X,Y,v)\in m(A)$ such that  $ev_{\Delta_{n}}:A[\Delta_n]\to
A^n$, $\delta_i\mapsto (\delta_i(X,Y)(v))$ and  $ev_{\Delta_{n-k}}:A[\Delta_{n-k}]\to
A^{n-k}$, $\delta_i\mapsto (\delta_i(X,Y)(v))mod\ V_k(A)$ are surjective. 

  Recall that the surjectivity of the $A$-linear maps
  $ev_{\Delta_{n-k}}$ and $ev_{\Delta_{n}}$ is
  equivalent to their being an isomorphism (\cite{AtM}, Exercice 3.15),
  thus to their determinant being invertible in $A$. In particular,
  $\MNTilde_\Delta^{cyc}$ is defined by the nonvanishing of a determinant in 
  $\MN(\pp_{k,n}) \times \K^n$, hence it is  
  affine. 

  Since we have a covering of $\MNTilde^{cyc}(\pp_{k,n})$ 
  with open affine $P_{k,n}$-stable 
subschemes $\MNTilde_\Delta^{cyc}\simeq Spec\ B_\Delta$,
it is possible to
  construct a categorical quotient on each open subscheme as $\MNTilde_\Delta^{cyc}/P_{k,n}:=Spec \
  B_\Delta^{P_{k,n}}$ with the invariant functions. 
  Since the group is not
  reductive, $B_\Delta^{P_{k,n}}$ is not a priori finitely generated (and 
  we cannot apply \cite[Thm~1.1]{Mu}).
We have to show without the general theory 
  that the local quotients are algebraic (\emph{i.e.} of finite type over $\K$)
  and that the local constructions glue
  to produce a global categorical quotient.

Recall the functor $h$ which defines the Hilbert scheme $\skno$. 
If $\Delta$ is as above, there
is  a  subfunctor $h_{\Delta}$ of $h$. By definition, $h_\Delta(A)$
contains the pairs $(I,J)\in h(A)$ such that
$A[x,y]/I$ (resp. $A[x,y]/J$) is free on
$A$ of rank $n-k$ (resp. of rank $n$) and such that the monomials
$\delta_i$ in $\Delta_{n-k}$ (resp. in $\Delta_n$) form a basis of 
$A[x,y]/I$ (resp. $A[x,y]/J$). This is a relatively representable subfunctor, which is representable 
by an open subscheme $S_\Delta\subset S_0^{[n-k,n]}$.

There is a morphism of functors
$m\to h$ defined by \[(X,Y,v)\in m(A) \mapsto
(I=Ker(ev_{n-k}),J=Ker(ev_n))\in h(A)\] and a morphism of schemes 
$\widetilde{\pi}_{k,n}:\MNTilde^{cyc}(\pp_{k,n})\rightarrow \sxno{n-k}$ 
associated with the morphism of functors. By construction, this map 
is invariant under the action of $P_{k,n}$. From the universal
property of the categorical quotient, we obtain a factorisation
$\MNTilde^{cyc}_\Delta/P_{k,n}\rightarrow \sxno{n-k}$
whose image is in $S_\Delta$, hence the factorisation 
$\isompHilb_\Delta:\MNTilde^{cyc}_\Delta/P_{k,n}\rightarrow S_\Delta$.

To prove that $\isompHilb_\Delta$
is an isomorphism, we will construct an
inverse $\rho_\Delta$. 
Let $(I,J)\in h_\Delta(A)$. We choose a basis $b_1,\dots,b_n$ of $
A[x,y]/J$ such that  $b_{k+1},\dots,b_{n}$ is a basis of $A[x,y]/I$.
Such a basis exists since we can take $b_i$ to be 
the monomials in $\Delta$. 
If we replace each  element $b_i,i\leq k$ by a suitable 
combination $b_i+\sum_{j\geq k+1}a_{ij}b_j$, we may suppose that the
kernel $I/J$ of the map $A[x,y]/J \to A[x,y]/I$ is generated by 
$b_1,\dots,b_k$. 
This choice of our basis yields an 
effective isomorphism $A[x,y]/J\simeq A^{n}$. The
multiplication by $x$ and $y$ on $A[x,y]/J$ then correspond to matrices
$X,Y \in \pp_{k,n}(A)$. Choose $v=1\in A[x,y]/J$.
Then $(X,Y,v)\in m(A)$ 
and corresponds to a morphism $\nu:Spec\ A \to \MNTilde^{cyc}(\pp_{k,n})$. This morphism is not canonically defined because
of the arbitrary choice of the basis $b_1,\dots,b_n$. However, 
if $\nu_1$ and $\nu_2$ are two possible choices for the morphism
$\nu$, and if $\phi\in P_{k,n}(A)=Hom(Spec\;A,P_{k,n})$ is the decomposition 
matrix of the basis defining $\nu_1$ on the basis defining $\nu_2$, 
then $\nu_2=\gamma\circ(\phi,\nu_1)$, where $\gamma$
is the action morphism.
Since $\nu_1$ and $\nu_2$ differ by the action of $P_{k,n}(A)$, 
it follows that the morphism 
$\eta=\quo\circ \nu_1=\quo\circ \nu_2$ is well defined. 
The map which sends $(I,J)$ to $\eta$ is a morphism of functors.
This is the functorial 
description 
of a scheme morphism $\rho_\Delta:S_\Delta\rightarrow \MNTilde^{cyc}_\Delta/P_{k,n}$.
By construction, $\rho_\Delta$ and $\isompHilb_\Delta$ are mutually
inverse. 

Since we proved that our local quotients  $\MNTilde_\Delta^{cyc}/P_{k,n}$
are isomorphic to an open subscheme $S_{\Delta}$  
of the Hilbert scheme $S_0^{[n-k,k]}$, these local quotients
are algebraic. Gluing these local quotients
to form a global quotient $\MNTilde^{cyc}(\pp_{k,n})/P_{k,n}$ is
straightforward: this corresponds to the gluing of the open 
subschemes $S_{\Delta}$ in the Hilbert scheme $S_0^{[n-k,k]}$.




So far, we have proved that the Hilbert scheme  $S_0^{[n-k,k]}$ 
is a categorical quotient of  $\MNTilde^{cyc}$. 
There remains to prove that this quotient is locally trivial in the
Zariski topology. This will imply the remaining statements of the
theorem, namely that the quotient is geometrical
and the surjectivity of the quotient morphism. 
We shall prove the local triviality over $S_\Delta$. More precisely, 
we shall exhibit a pair of inverse isomorphisms $\phi_1,\phi_2$ to prove
that $S_\Delta \times  P_{k,n}$ and $\MNTilde_\Delta^{cyc}$ 
are isomorphic as schemes over $S_\Delta$.

Remark that we have constructed a (non-canonical) map $h_{\Delta}(A)\mapsto m(A)$
sending $(I,J)$ to $\nu$. Since this map depends functorially on $A$,
this functor corresponds to a section $s_\Delta:S_\Delta\rightarrow \MNTilde_\Delta^{cyc}$
of the map $\widetilde{\pi}_{k,n}:\MNTilde^{cyc}_\Delta \rightarrow
S_\Delta$. We define $\phi_1$ to be the composition 
\begin{displaymath}
  S_\Delta \times  P_{k,n} \stackrel{(s_\Delta,Id)}{\rightarrow } \MNTilde^{cyc}_\Delta  \times  P_{k,n} \rightarrow  \MNTilde^{cyc}_\Delta 
\end{displaymath}
where the second arrow is given by the group action.

The identity map $id_{\MNTilde_\Delta^{cyc}}$ on $\MNTilde_\Delta^{cyc}\simeq Spec(B_{\Delta})$, is an element of $m_{\Delta}(B_{\Delta})$.  
It yields an evaluation map $(ev_n)_1$ and
the following diagram, where $J$ is the kernel of $(ev_n)_1$
and $I$ is the kernel of $\psi\circ (ev_n)_1$. 
\[
\begin{array}{ccccccccccc}
&    &     &   & I          &        & I/J                  &   &        &       &      \\
&    &     &   & \hookdownarrow &        & \hookdownarrow           &   &        &       &      \\
   &  & J & \hookrightarrow        & B_\Delta[x,y] &
   \stackrel{(ev_n)_1}{\to} & V(B_\Delta) & 
   &  &  &  \\
   &  &  &      &  &
    & \phantom{\psi}\downarrow \psi & 
   &  &  &  \\
   &  &  &         &  &  & V(B_\Delta)/V_k(B_\Delta) &  &  &  &  
\end{array}.
\]
Using the map $s_\Delta \circ
\widetilde{\pi}_{k,n}:\MNTilde^{cyc}_\Delta
\rightarrow\MNTilde^{cyc}_\Delta $ instead of the identity map, 
we get a similar diagram with
$(ev_n)_2$ instead of $(ev_n)_1$ and $I,J$ unchanged. The 
 morphism 
$g=(ev_n)_1\circ((ev_n)_2)^{-1}\in \GL(V(B_\Delta))$ is then well
defined. Since $((ev_n)_2)^{-1}(Ker(\psi))=I$,  $g$ sends
$I/J=Ker(\psi)=V_k(B_{\Delta})$ to itself and $g\in P_{k,n}(B_{\Delta})=Hom(Spec(B_\Delta),P_{k,n})$. 
We define $\phi_2:  \MNTilde^{cyc}_\Delta  \rightarrow  S_\Delta
\times  P_{k,n} $ by $\phi_2=(\widetilde{\pi}_{k,n},g)$. 
By construction, the morphisms $\phi_1$ and $\phi_2$ are inverse. 


\subsection{From ${\MNTilde}$ to $\MN$} 

In the previous section, the Hilbert schemes $\skno$ ans $\sknoemb$ have been  
constructed as quotients of the schemes $\MNTilde^{cyc}(\pp_{k,n})$ and 
$\MNTilde^{cyc}(\qq_{k,n})$ which parametrize triples $(X,Y,v)$. In this
section, we show how to throw off the data $v$. From this point and until the end of the article, we only need to work with the underlying variety structure on our schemes. 
In particular, we will consider the following variety  for $\wfr=\pp_{k,n}$ or $\qq_{k,n}$:
\[\MN^{cyc}(\wfr):=\{(X,Y)\in \MN(\wfr)|\, \exists v\in V\textrm{ s.t. } (X,Y,v)\in \MNTilde^{cyc}(\wfr)\}.\]


\begin{lm}\label{freeaction}
\begin{itemize}
\item[(i)] The action of $P_{k,n}$ (resp. $Q_{k,n}$) on $\widetilde{\MN}^{cyc}(\pp_{k,n})$ (resp. $\widetilde{\MN}^{cyc}(\qq_{k,n})$) is free.
\item[(ii)] Let $v_1,v_2\in V$ such that $(X,Y,v_i)\in  \widetilde{\MN}^{cyc}(\pp_{k,n})$ (resp.  $\widetilde{\MN}^{cyc}(\qq_{k,n})$). Then $(X,Y,v_1)$ and $(X,Y,v_2)$ belong to the same $P_{k,n}$(resp. $Q_{k,n}$)-orbit.
\end{itemize}
\end{lm}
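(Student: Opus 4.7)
My plan is to treat (i) and (ii) in parallel, exploiting the cyclic vector in both cases. For (i), suppose $g \in P_{k,n}$ fixes $(X,Y,v)$. Then $gX = Xg$, $gY = Yg$ and $gv = v$. For any $w \in V$, cyclicity lets us write $w = P(X,Y)v$, whence
\[
gw = gP(X,Y)v = P(X,Y)gv = P(X,Y)v = w,
\]
so $g = \mathrm{id}_V$. The argument for $Q_{k,n}$ is identical.

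For (ii), the key technical observation is that for a cyclic vector $v$ of a pair of commuting endomorphisms, the annihilator $I_v := \{P \in \K[x,y] \mid P(X,Y)v = 0\}$ equals the annihilator $\mathrm{Ann}(V) := \{P \mid P(X,Y) = 0 \text{ on } V\}$. Indeed $\mathrm{Ann}(V) \subseteq I_v$ is clear, and for the reverse inclusion any $w = Q(X,Y)v$ satisfies $P(X,Y)w = Q(X,Y)P(X,Y)v = 0$ since $X$ and $Y$ commute. Applying this to $v_1$ and $v_2$ yields $I_{v_1} = I_{v_2}$, so the prescription $g(P(X,Y)v_1) := P(X,Y)v_2$ defines a $\K$-linear map $g : V \to V$; swapping the roles of $v_1, v_2$ produces an inverse, so $g$ is bijective, and by definition $g$ commutes with both $X$ and $Y$ and sends $v_1$ to $v_2$.

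It remains to check that $g$ preserves the flag defining the parabolic. For $\pp_{k,n}$, take $w \in V_k$ and write $w = P(X,Y)v_1$; passing to the quotient $V/V_k$, one has $P(\bar X, \bar Y)\bar v_1 = 0$. Since $\bar v_1$ is cyclic for $(\bar X, \bar Y)$ on $V/V_k$ by the definition of $\widetilde{\MN}^{cyc}(\pp_{k,n})$, the same annihilator argument gives $P(\bar X, \bar Y)\bar v_2 = 0$, i.e.\ $g(w) = P(X,Y)v_2 \in V_k$. Applying the same reasoning to $g^{-1}$ shows $g(V_k) = V_k$, so $g \in P_{k,n}$ and $g \cdot (X,Y,v_1) = (X,Y,v_2)$. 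For $\qq_{k,n}$, one applies the identical argument simultaneously to every subspace $V_j$ with $1 \leqslant j \leqslant k$, using the cyclicity of $\bar v_i$ on each quotient $V/V_j$ (which is part of the functorial definition of $\widetilde{\MN}^{cyc}(\qq_{k,n})$).

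The only subtle point is the step showing $g$ is flag-preserving: it relies crucially on the fact that the definition of $\widetilde{\MN}^{cyc}$ imposes cyclicity not only at the top ($V$) but also on every appropriate quotient. Without this, $g$ would merely intertwine $X,Y$ but need not belong to the parabolic subgroup. Once flag preservation is established, (i) and (ii) together imply that the $P_{k,n}$- and $Q_{k,n}$-orbits in $\widetilde{\MN}^{cyc}$ are precisely the fibers of the forgetful map $(X,Y,v) \mapsto (X,Y)$, which is the geometric content needed later in the paper.
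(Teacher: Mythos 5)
Your proof of (i) and of the first part of (ii) (defining $g$ via the equal annihilators $I_{v_1}=I_{v_2}$, checking $g\in\GL(V)$, $g$ intertwines $X,Y$ and $g v_1=v_2$) coincides with the paper's argument. Where you diverge is the flag-preservation step. You deduce $g(V_k)\subseteq V_k$ by pushing the equality of annihilators down to $V/V_k$, invoking the cyclicity of $\bar v_1,\bar v_2$ on the quotient, which is indeed part of the functorial definition of $\widetilde{\MN}^{cyc}$. The paper instead argues directly: since $v_1$ is cyclic on $V$, one can write $v_2=S(X,Y)v_1$ and any $w\in V_k$ as $w=R(X,Y)v_1$; then $g(w)=R(X,Y)v_2=R(X,Y)S(X,Y)v_1=S(X,Y)(R(X,Y)v_1)=S(X,Y)w\in V_k$, using only commutativity of $X,Y$ and the fact that $V_k$ is $(X,Y)$-stable (which holds because $X,Y\in\wfr$). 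This shows $g$ stabilizes \emph{every} $(X,Y)$-stable subspace, with no appeal to quotient cyclicity.

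Both routes are valid, but your closing remark that ``without [quotient cyclicity] $g$ would merely intertwine $X,Y$ but need not belong to the parabolic subgroup'' is not correct: the paper's computation shows top cyclicity alone suffices, because any operator of the form $S(X,Y)$ in the commutant of $(X,Y)$ automatically preserves every $(X,Y)$-invariant subspace, and $g$ acts as such an operator on each element of $V$. The quotient-cyclicity conditions in the definition of $\widetilde{\MN}^{cyc}$ are needed elsewhere (to make $\widetilde\pi_{k,n}$ land in the nested Hilbert scheme), not to make the action transitive on fibers.
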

\begin{proof}
(i) Let $(X,Y,v)\in \widetilde{\MN}^{cyc}(\wfr)$ and $g\in \GL(V)$ stabilizing $(X,Y,v)$. Then $g$ stabilizes each $X^iY^j(v)$ and, since these elements generates $V$, we have $g=Id$.\\
(ii) 
Let $g:\func{V}{V}{P(X,Y).v_1}{P(X,Y).v_2}$. It is well defined since $\{P\in \K[x,y]|P(X,Y).v_i=0\}=\{P\in \K[x,y]|P(X,Y)=0\}$ by the cyclicity condition.
Moreover $g$ is linear and $g.v_1=v_2$.\\
For any $S\in\K[x,y]$, we have $gXg^{-1}(S(X,Y).v_2)=gXS(X,Y)(v_1)=g(S'(X,Y)(v_1))=S'(X,Y).v_2=X(S(X,Y)(v_2))$ where $S'=xS\in \K[x,y]$. In particular, $g$ stabilizes $X$ by cyclicity of $v_2$ and the same holds for $Y$.\\
A similar argument shows that any subspace $V_i\subset V$ stable under
$X$ and $Y$ is stabilized by $g$. The cyclicity property implies that
$g.v_1=S(X,Y)(v_1)$ and that $V_i$ is generated by $(R_l(X,Y)(v_1))_l$
for some polynomials $S,(R_l)_{l}$ of $\K[x,y]$. Then $g.V_i$ is
generated by $(g.R_l(X,Y)(v_1))_l=(R_l(X,Y)(g.v_1))_l=((R_l(X,Y)\times
S(X,Y))(v_1))_l=(S(X,Y)(R_l(X,Y)(v_1)))_l\subset V_i$. Hence $g$
stabilizes each such subspace $V_i$ and the result follows from the
definitions of $P_{k,n}$ and $Q_{k,n}$. 
\end{proof}

It follows from Lemma \ref{freeaction}(ii) that the following set-theoretical quotient map
\[\pi_{k,n}:\left\{\begin{array}{rcl}\MN^{cyc}(\pp_{k,n})&\rightarrow&\sxno{n-k}\\(X,Y)&\mapsto
    &(Ker(ev_{n-k}), Ker(ev_n)) \\&&(=\widetilde{\pi}_{k,n}(X,Y,v)\;
    \forall v\in V\textrm{ s.t. } (X,Y,v)\in
    \MNTilde^{cyc}(\pp_{k,n}))\end{array}\right.
\]
is well defined where $ev_{n-k}:\func{\K[x,y]}{\gl(V/V_k)}{P}{P(X^{(k)},Y^{(k)})}$ and $ev_{n}:\func{\K[x,y]}{\gl(V)}{P}{P(X,Y)}$.
This also allows to define $\pi'_{k,n}:\MN^{cyc}(\qq_{k,n})\rightarrow\sxnoemb{n-k}$.

\begin{prop}\label{corresirred}
$\pi_{k,n}$ induces a bijection between irreducible components of $\sxno{n-k}$ of dimension $m$ and irreducible components of $\mathcal \MN^{cyc}(\pp_{k,n})$ of dimension $m+(\dim \pp_{k,n}-n)$.
The same holds for $\pi'_{k,n}$, $\sxnoemb{n-k}$ and $\mathcal
\MN^{cyc}(\qq_{k,n})$.

\end{prop}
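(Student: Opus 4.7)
The plan is to factor $\pi_{k,n}$ through the intermediate variety $\MNTilde^{cyc}(\pp_{k,n})$ and to analyze separately the two projections
\[
\MN^{cyc}(\pp_{k,n}) \xleftarrow{\;p\;} \MNTilde^{cyc}(\pp_{k,n}) \xrightarrow{\;q\;} \sxno{n-k},
\]
where $p$ forgets the cyclic vector $v$ and $q$ is the geometric quotient by $P_{k,n}$ furnished by Theorem \ref{thmcorres}. I would show that each of $p$ and $q$ induces a bijection between irreducible components with a controlled dimension shift, and then compose them.

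For $p$: by construction $\MNTilde^{cyc}(\pp_{k,n})$ is an open subvariety of $\MN^{cyc}(\pp_{k,n})\times V$, the cyclicity of $v$ at $(X,Y)$ being a nonvanishing-determinant condition on evaluation of a family of $n$ monomials. Hence for every $(X,Y)\in \MN^{cyc}(\pp_{k,n})$ the fiber $p^{-1}(X,Y)=\{v\in V\mid (X,Y,v)\in \MNTilde^{cyc}\}$ is an open, nonempty (by the very definition of $\MN^{cyc}$), hence irreducible subset of $V\cong \K^n$ of dimension $n$. From this I would deduce that the irreducible components of $\MNTilde^{cyc}(\pp_{k,n})$ are exactly the sets $(C\times V)\cap \MNTilde^{cyc}$ as $C$ ranges over the irreducible components of $\MN^{cyc}(\pp_{k,n})$, and each such component has dimension $\dim C + n$.

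For $q$: Since $P_{k,n}$ is connected (it is a parabolic subgroup of $\GL_n$) and acts freely on $\MNTilde^{cyc}(\pp_{k,n})$ by Lemma~\ref{freeaction}(i), every fiber of $q$ is a single $P_{k,n}$-orbit, irreducible of dimension $\dim P_{k,n}=\dim \pp_{k,n}$, and every irreducible component of $\MNTilde^{cyc}(\pp_{k,n})$ is $P_{k,n}$-stable. Therefore $\widetilde{C}\mapsto q(\widetilde{C})$ is a bijection between the irreducible components of $\MNTilde^{cyc}(\pp_{k,n})$ and of $\sxno{n-k}$, satisfying $\dim q(\widetilde{C})=\dim \widetilde{C}-\dim \pp_{k,n}$. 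Composing the two bijections, an irreducible component of $\sxno{n-k}$ of dimension $m$ corresponds to an irreducible component of $\MN^{cyc}(\pp_{k,n})$ of dimension $m+\dim \pp_{k,n}-n$, as claimed. The $Q_{k,n}$ case proceeds identically from the $\qq_{k,n}$-analogues in Theorem~\ref{thmcorres} and Lemma~\ref{freeaction}(i).

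The only subtle point is the elementary but essential lemma that an open subvariety $U$ of a product $Y\times V$ whose fiber over every point of $Y$ is nonempty and irreducible has its irreducible components in natural bijection with those of $Y$, with dimension shift $\dim V$; once that is stated cleanly, the whole argument becomes a purely formal combination of Theorem~\ref{thmcorres} and Lemma~\ref{freeaction}.
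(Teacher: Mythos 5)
Your proposal is correct and follows essentially the same route as the paper: factor $\pi_{k,n}$ through $\MNTilde^{cyc}(\pp_{k,n})$ and treat separately the projection forgetting the cyclic vector and the geometric quotient $\widetilde{\pi}_{k,n}$, each being an open surjective map with irreducible fibers of known dimension ($n$ and $\dim\pp_{k,n}$ respectively). The paper packages both steps into a single general lemma (open surjective morphism with irreducible fibers induces a bijection on irreducible components, citing \cite{TY}), whereas you argue each projection a bit more concretely, but the content is the same.
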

\begin{proof}
As usual, we give a proof only for $\pp_{k,n}$.

Let $Z_1, Z_2$ be varieties and $f:Z_1\rightarrow Z_2$ be an open surjective morphism with irreducible fibers. Then, the pre-image by $f$ of any irreducible component of $Z_2$ is irreducible (\emph{e.g.} see \cite[Proposition
1.1.7]{TY}). On the other hand, the image of any irreducible component of $Z_1$ by $f$ is irreducible. Hence $f$ induces a bijection between irreducible components of $Z_1$ and $Z_2$.

Then, since a geometric quotient by a connected group satisfies the above assumptions on $f$, we can apply the previous argument to $\widetilde{\pi}_{k,n}$. It also works for $pr:\func{\widetilde{\MN}^{cyc}(\pp_{k,n})}{\MN^{cyc}(\pp_{k,n})}{(X,Y,v)}{(X,Y)}$. The dimension statement follows since fibers of $\widetilde{\pi}_{k,n}$ are of dimension $\dim \pp$ (Lemma \ref{freeaction} (i)) and those of $pr$ are of dimension $n$ (given $(X,Y)$, the set $\{v|\,(X,Y,v)\in \MNTilde(\pp_{k,n})\}$ is open in $V$).
%
%
\end{proof}

The correspondence with commuting varieties allows us to see in an elementary way some
non-trivial facts on the Hilbert scheme. We give  an example.

\begin{prop}\label{schemasintermediaires}
  Given a pair $(z_{n-k},z_n)\in \sxno{n-k}$, there exists a chain of
  intermediate subschemes $z_{n-k}\subset z_{n-k+1}\subset
  \dots\subset z_n$. In other words, the projection map $\sxnoemb{n-k}\rightarrow \sxno{n-k}$ is surjective. 
The same holds for the projection map $\sxnoemb{n-k}\rightarrow \sxnoemb{n-k'}$ with $k\geqslant k'$.
\end{prop}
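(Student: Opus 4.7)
The plan is to reduce both assertions to the corresponding statements on the commuting variety side via Theorem~\ref{thmcorres}, where flag refinement becomes a standard simultaneous triangulation of commuting nilpotent operators.

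For the first assertion, given $(z_{n-k},z_n)\in\sxno{n-k}$, I would use the surjectivity of $\widetilde{\pi}_{k,n}$ from Theorem~\ref{thmcorres}(2) to produce a triple $(X,Y,v)\in\MNTilde^{cyc}(\pp_{k,n})$ mapping to $(z_{n-k},z_n)$. By construction $X,Y$ are commuting nilpotent endomorphisms of $V$ that preserve $V_k$, and $v$ is a cyclic vector. The restrictions $X|_{V_k}$ and $Y|_{V_k}$ are commuting nilpotent operators on $V_k$, so an Engel-type induction applies: the subspace $\Ker X\cap V_k$ is nonzero and $Y$-stable, so the nilpotent operator $Y$ restricted to it has a nonzero kernel, producing a common invariant line; pass to the quotient and iterate to obtain a common invariant complete flag $0=V_0'\subset V_1'\subset\cdots\subset V_k'=V_k$. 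After conjugating by a $g\in\GL_n$ that carries $(V_i')_{0\leqslant i\leqslant k}$ to the standard flag defining $\qq_{k,n}$, the triple $(gXg^{-1},gYg^{-1},gv)$ lies in $\MNTilde^{cyc}(\qq_{k,n})$, and applying $\widetilde{\pi}'_{k,n}$ from Theorem~\ref{thmcorres}(2) yields a chain $(z_{n-k}',z_{n-k+1}',\ldots,z_n')\in\sxnoemb{n-k}$. A direct computation with the evaluation maps shows that the ideals $\Ker(\mathrm{ev}\bmod V_i')$ are unchanged under $\GL_n$-conjugation (since $P(gXg^{-1},gYg^{-1})(gv)=gP(X,Y)(v)$), so the extremal terms $z_n'$ and $z_{n-k}'$ coincide with $z_n$ and $z_{n-k}$, giving the required chain.

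For the second assertion, essentially the same strategy works: given $(z_{n-k'},\ldots,z_n)\in\sxnoemb{n-k'}$, lift via Theorem~\ref{thmcorres} to $(X,Y,v)\in\MNTilde^{cyc}(\qq_{k',n})$, so that $X,Y$ preserve a flag $V_1'\subset\cdots\subset V_{k'}'\subset V_n$. The induced endomorphisms $\bar X,\bar Y$ on $V/V_{k'}'$ are again commuting nilpotent, so simultaneous triangulation produces a common invariant complete flag in $V/V_{k'}'$. Pulling back the first $k-k'$ subspaces yields an extension $V_{k'}'\subset V_{k'+1}'\subset\cdots\subset V_k'$ of the original flag, still $X,Y$-invariant. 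Conjugating to the standard $\qq_{k,n}$-flag and applying $\widetilde{\pi}'_{k,n}$ delivers a chain in $\sxnoemb{n-k}$ whose projection to $\sxnoemb{n-k'}$ recovers the original tuple, again because ideals of evaluation maps are conjugation-invariant.

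The only non-bookkeeping step is the simultaneous triangulation of commuting nilpotent operators in arbitrary characteristic, but this is classical and does not require any hypothesis on $\mathrm{char}\,\K$. The rest is an application of the quotient construction of Section~\ref{corres} combined with the straightforward verification that conjugation does not alter the kernels defining the Hilbert-scheme data.
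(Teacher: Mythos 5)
Your proposal is correct and follows essentially the same route as the paper: lift to the commuting-variety side via Theorem~\ref{thmcorres}, simultaneously triangularize the commuting nilpotent restrictions (resp.\ induced maps on the quotient), and push back down. The only cosmetic remark is that you justify the invariance of the ideals by appealing to $\GL_n$-conjugation, whereas for the term $\Ker(ev_{n-k})$ one also needs $g$ to stabilize $V_k$; this holds automatically for your $g$ (since $V_k'=V_k$, hence $g\in P_{k,n}$), and the paper makes this explicit by taking $g\in\GL(V_k)\subset P_{k,n}$ from the start.
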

\begin{proof}
  The first assertion follows from the fact that any commuting pair 
 $(X_{|V_k},Y_{|V_k})\in \gl(V_k)$ is simultaneously trigonalizable by an element of $\GL_{V_k}\subset P_{k,n}$. Hence, in the new basis, it stabilizes the flag $V_1\subset V_2, \dots\subset V_k$.
The second one is the same argument applied to the pair $(X^{(k)},Y^{(k)})\in \gl(V/V_k)$.
\end{proof}

\begin{rem}\label{remSurLaSymetrie}  Note that there is a Lie algebra isomorphism between
  $\pp_{k,n}$ and $\pp_{n-k,n}$ (namely, minus the transposition with respect to the anti-diagonal). Hence the two varieties
  $\MN(\pp_{k,n})$ and $\MN(\pp_{n-k,n})$ are isomorphic. 

\[\xymatrix{ \MN^{cyc}(\pp_{n-k,n}) \ar@{^(->}[r]_{\quad\!\textrm{open}} \ar[d]_{\pi_{n-k,n}} &\MN(\pp_{k,n})& \MN^{cyc}(\pp_{k,n}) \ar@{_(->}[l]^{\textrm{open}\;}\ar[d]^{\pi_{k,n}} \\
\sxno{k} & &\sxno{n-k} }.\]

We use this duality in Lemma \ref{remonte1nmoins1} where we pull back
informations related to irreducibility from $\sxno{1}$ to
$\MN(\pp_{n-1,n})\cong\MN(\pp_{1,n})$. Eventually, this turns out to
be a key part of our proof of the irreducibility of $\sxno{n-1}$
(cf. Corollary \ref{coroIrredHilb1n}). 

However, the
  cyclicity condition breaks the symmetry
and there might be profound
  differences between $\MN^{cyc}(\pp_{k,n})$ and
  $\MN^{cyc}(\pp_{n-k,n})$, hence between $\sxno{n-k}$ and $\sxno{k}$.
  For instance,  $\sxyo{1}{3}$ and $\sxyo{2}{3}$ both contain a
  curvilinear locus as an open subvariety, and these curvilinear loci are
  isomorphic. On the boundary of this curvilinear locus, the two
  Hilbert schemes are quite different: when the scheme $z_3$ has
  equation $(x^2,xy,y^2)$ there is set theoretically only one length $1$
point
  $z_1$ in $z_3$, but there is a $\PP^1$ of $z_2$ with length $2$
  satisfying $z_2\subset z_3$. 

\end{rem}

\section{Technical lemmas on matrices}
\label{tech}

In this section, we collect technical results that will be used later
on. Most of these results aim to describe  $\mathfrak a^{nil}\subset \mathfrak a$,
where $\mathfrak a$ is a space of matrices commuting with a Jordan matrix
of type $\bolda\in \mathcal P(n)$ and $\mathfrak a^{nil}$ is the set of nilpotent matrices of $\mathfrak a$. In particular, we will  make frequent use of Lemmas  \ref{TA} and 
Proposition \ref{redred}. Parts of the results shown are well known in the more general framework of Lie algebras. Our goal is to translate this in the matrix setting and to provide a low-level understanding of the involved phenomena.

\begin{lm}\label{commons}
\item[(i)] $(M_n)^{nil}$ is an irreducible subvariety of codimension $n$ in $M_n$.
\item[(ii)] Assume that $\pp$ is the parabolic subalgebra defined by $\pp=\{X\in M_n| \,\forall j, \; X(V_{i_j})\subset V_{i_j}\}$ where the $i_j$ are $k+1$ indices satisfying $0=i_0\leqslant i_1\leqslant \dots i_k=n$. Then $X\in \pp$ is nilpotent if and only if the $k$ extracted matrices 
$$X_j=\left(\begin{array}{c c c}X_{i_{j-1}+1,i_{j-1}+1}&\cdots& X_{i_{j-1}+1,i_j} \\
\vdots & &\vdots\\
X_{i_j, i_{j-1}+1}& \cdots & X_{i_j,i_j}
\end{array}\right)\in M_{i_j-i_{j-1}}, \qquad 1\leqslant j\leqslant k, $$
are nilpotent.
\item [(iii)] If $\pp$ is a parabolic subalgebra of $M_n$ then $\pp^{nil}$ is an irreducible subvariety of $\pp$ of codimension $n$.
\end{lm}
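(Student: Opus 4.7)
The proof naturally splits along the three items, each of which rests on classical but well-packaged facts; I will state the plan in the order (i), (ii), (iii) since (iii) builds on the first two.

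For (i), I would argue that the nilpotent locus $(M_n)^{\mathrm{nil}}$ is cut out set-theoretically by the vanishing of the $n$ non-leading coefficients of the characteristic polynomial, and thus has codimension at most $n$ in $M_n$. For irreducibility, I would exhibit it as the closure of the single regular nilpotent orbit: a regular nilpotent element is conjugate to a single Jordan block of size $n$, and its $\GL_n$-centralizer has dimension $n$, so that the orbit has dimension $n^2-n$. Since every nilpotent matrix lies in the closure of this orbit (standard Jordan-form degeneration), $(M_n)^{\mathrm{nil}}$ coincides with this irreducible closure of dimension $n^2-n$, giving both irreducibility and the equality in codimension.

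For (ii), the point is purely about block upper-triangular matrices. Given the flag condition on $X \in \pp$, one has with respect to the adapted block decomposition the equality of characteristic polynomials
\[
\chi_X(t) \;=\; \prod_{j=1}^{k} \chi_{X_j}(t).
\]
Hence $\chi_X(t)=t^n$ if and only if each $\chi_{X_j}(t)=t^{i_j - i_{j-1}}$, which is to say that each diagonal block $X_j$ is nilpotent. This is the content of (ii), and no further argument is needed.

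For (iii), I would combine (i) and (ii). The parabolic $\pp$ admits the Levi decomposition $\pp = \mathfrak{l} \oplus \mathfrak{u}$, where the nilradical $\mathfrak{u}$ is the strictly block upper-triangular part and the Levi $\mathfrak{l} \simeq \prod_{j=1}^k M_{i_j - i_{j-1}}$ is extracted by the projection onto the diagonal blocks. By (ii), $\pp^{\mathrm{nil}}$ is exactly the preimage under this projection of $\prod_{j=1}^k (M_{i_j - i_{j-1}})^{\mathrm{nil}}$; equivalently,
\[
\pp^{\mathrm{nil}} \;=\; \Bigl( \prod_{j=1}^k (M_{i_j-i_{j-1}})^{\mathrm{nil}} \Bigr) \oplus \mathfrak{u},
\]
so $\pp^{\mathrm{nil}}$ is a trivial affine bundle over a product of nilpotent cones. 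By (i), each factor $(M_{i_j-i_{j-1}})^{\mathrm{nil}}$ is irreducible of codimension $i_j-i_{j-1}$ in $M_{i_j-i_{j-1}}$, so the product is irreducible of codimension $\sum_j(i_j-i_{j-1})=n$ in $\mathfrak{l}$. Pulling back along the projection $\pp \twoheadrightarrow \mathfrak{l}$, which has affine (hence irreducible) fibers isomorphic to $\mathfrak{u}$, preserves irreducibility and codimension, yielding (iii).

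There is no genuine obstacle: everything is standard once the block decomposition in (ii) is in place. The only step that requires a moment of care is observing that the codimension is additive across the Levi factors in (iii), which is precisely why the codimension $n$ arising in (i) matches exactly the codimension in the parabolic case.
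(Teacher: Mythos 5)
Your proof is correct. The approach is in the same circle of ideas as the paper's but differs in the technical details for (i) and (ii). For (i) the paper quotes Basili's argument: it exhibits $(M_n)^{\mathrm{nil}}$ as the image of the morphism $\GL_n\times J_n\to(M_n)^{\mathrm{nil}}$, $(g,X)\mapsto gXg^{-1}$, where $J_n$ is the $(n-1)$-dimensional space of matrices supported on the superdiagonal, and reads off dimension $n^2+(n-1)-(2n-1)=n^2-n$ from the generic fiber; you instead identify $(M_n)^{\mathrm{nil}}$ with the closure of the regular nilpotent orbit and compute the dimension from the centralizer. These are essentially two bookkeepings of the same fact (density of the regular orbit). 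For (ii) the paper uses the Levi decomposition $\pp\cong\mathfrak l\oplus\mathfrak n$ and the observation that $X=X_{\mathfrak l}+X_{\mathfrak n}$ is nilpotent iff the Levi part $X_{\mathfrak l}$ is, since $\mathfrak n$ is a nilpotent ideal; you argue instead via the multiplicativity of the characteristic polynomial across the diagonal blocks, $\chi_X=\prod_j\chi_{X_j}$, together with the fact that over an algebraically closed field nilpotency of $X$ is equivalent to $\chi_X(t)=t^n$. Your argument for (ii) is arguably cleaner and more self-contained, whereas the paper's sets up the $\mathfrak l\oplus\mathfrak n$ decomposition that it reuses immediately in (iii); your proof of (iii) then reconstructs exactly this decomposition, so the two treatments of (iii) coincide. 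One small remark: in (i) the characteristic-polynomial bound on the codimension is superfluous once you have the orbit-closure description, since that already computes the dimension exactly.
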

\begin{proof}
(i) See \cite[Proposition~2.1]{Bas03} for an elementary proof of this classical fact.\\
(ii) First, note that $X_j$ can be viewed as the matrix of the endomorphism induced by $X$ on $V_{i_j}/V_{i_{j-1}}$. 
Then, as vector spaces, $$\pp\stackrel{v.s.}{\cong}  \mathfrak l \oplus \mathfrak n\qquad  \mbox{ where }\left\{ 
\begin{array}{l}\mathfrak l:=\prod_{j=1}^k(\End(V_{i_j}/V_{i_j-1}))\\
  \mathfrak n:=\{X\in \pp\,|\,X(V_{i_j})\subset
  V_{i_{j-1}}\}\end{array}\right. $$ and $\mathfrak n$ is a nilpotent
ideal of $\pp$.
Hence $X=X_{\mathfrak l}+X_{\mathfrak n}\in \pp$ is nilpotent if and only if $X_{\mathfrak l}$ is nilpotent. This is equivalent to the nilpotency of each $X_j$.

(iii) Up to base change, one can assume that $\pp$ satisfies the hypothesis of (ii). Thus $\pp^{nil}$ is isomorphic to $\prod_{j=1}^k(\End(V_{i_j}/V_{i_j-1}))^{nil}\times \mathfrak n$. It then follows from (i) that $\pp^{nil}$ is an irreducible subvariety of $\pp$ of codimension $\sum_{j=1}^k (i_j-i_{j-1})=n$.
\end{proof}
Let us explain (ii) in a more visual way.
\begin{ex}
A matrix of the form
$$X=\left(\begin{array}{c c c c c} a&b&c&d&e\\f&g&h& i& j\\0&0&k&l&m\\ 0&0&n&o&p\\0&0&q&r&s\end{array}\right) $$
is nilpotent if and only if the two following submatrices are nilpotent
$$X_1=\left(\begin{array}{c c} a &b\\ f&g\end{array}\right), \qquad X_2=\left(\begin{array}{c c c} k&l&m\\n&o&p\\ q&r&s\end{array}\right)$$
\end{ex}

Fix  an element $\bolda=(\lambda_1\geqslant \dots \geqslant \lambda_{d_{\bolda}})$  in $\mathcal P(n)$, the set of partitions of $n$. We define $X_{\bolda}\in M_n$ as the nilpotent element in Jordan canonical form associated to $\bolda$. In other words, in the basis $(f^i_j:=e_{\sum_{\ell=1}^{i-1} \lambda_{\ell}+j})_{\begin{subarray}{l}1\leqslant i\leqslant d_{\bolda}\\ 1\leqslant j\leqslant \lambda_i\end{subarray}}$, we have
\begin{equation}\label{Xbolda}  
X_{\bolda}(f^i_j)=\alter{f^i_{j-1}}{if $j\neq 1$,}0{else.}\end{equation}
For $Y\in M_n$, we denote the entries of $Y$ via
$Y.f^{i'}_{j'}=\sum_{(i,j)}Y^{i,i'}_{j,j'} f^i_j$ and use the following notation \[Y=\left(Y^{i,i'}_{j,j'}\right)_{(i,j), (i',j')}.\] An explicit characterization of $M_n^{X_{\bolda}}:=\{Y\in M_n| \;[X_{\bolda},Y]=0\}$ is given by the following classical lemma.

\begin{lm}\label{TA} $Y\in M_n^{X_{\bolda}}$ if and only if
  the following relations are satisfied:  
$$\left\{\begin{array}{l l}Y^{i,i'}_{j,j'}=0 &\mbox{ if $j>j'$ or $\lambda_i-j<\lambda_{i'}-j'$},\\
Y^{i,i'}_{j,j'}=Y^{i,i'}_{j-1,j'-1} &\mbox{ if $2\leqslant j\leqslant
  j'$ and $\lambda_i-j\geqslant \lambda_{i'}-j'$}. 
\end{array}\right.$$ 
Picturally, this means that $Y$ can be decomposed into blocks $Y^{i,i'}\in M_{\lambda_i,\lambda_{i'}}$ where
$$ Y^{i,i'}=\left(\begin{array}{c c c c}Y^{i,i'}_{1,1}& Y^{i,i'}_{1,2}& \dots & Y^{i,i'}_{1,\lambda_{i'}}\\
0 & Y^{i,i'}_{1,1}& \ddots & \vdots\\
\vdots & 0 & \ddots & Y^{i,i'}_{1,2}\\
\vdots & \vdots & \ddots & Y^{i,i'}_{1,1}\\
\vdots & \vdots & \vdots & 0\\
\vdots & \vdots & \vdots & \vdots\\
0&0&\dots&0
\end{array}\right)\mbox{ if $\lambda_i\geqslant \lambda_{i'}$},$$

$$ Y^{i,i'}=\left(\begin{array}{c c c c c c c}
0& \dots & 0 & Y^{i,i'}_{\lambda_{i},\lambda_{i'}}&\dots& Y^{i,i'}_{2,\lambda_{i'}} & Y^{i,i'}_{1,\lambda_{i'}}\\
0&\dots& \dots & 0 & \ddots & \ddots & Y^{i,i'}_{2,\lambda_{i'}}\\
\vdots & \dots & \dots & \dots & \ddots & \ddots &\vdots\\
0& \dots & \dots & \dots & \dots & 0 & Y^{i,i'}_{\lambda_{i},\lambda_{i'}}\\
\end{array}\right) \mbox{ if $\lambda_i\leqslant \lambda_{i'}$}.$$
\end{lm}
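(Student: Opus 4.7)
The plan is to unpack $[X_{\bolda}, Y]=0$ entry-by-entry on the basis $(f^i_j)$, using the defining relation (\ref{Xbolda}) for $X_{\bolda}$. Computing
\[
(YX_{\bolda})(f^{i'}_{j'}) = Y(f^{i'}_{j'-1}) = \sum_{(i,j)} Y^{i,i'}_{j, j'-1}\, f^i_j \quad (j'\ge 2), \qquad (X_{\bolda} Y)(f^{i'}_{j'}) = \sum_{i,\; 2\le j\le \lambda_i} Y^{i,i'}_{j, j'}\, f^i_{j-1},
\]
and equating coefficients of $f^i_j$ on both sides, I will obtain all the entrywise relations imposed by commutation with $X_{\bolda}$.

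These relations split into an interior recursion and two boundary vanishings. For $2 \le j \le \lambda_i$ and $2 \le j' \le \lambda_{i'}$ I get the Toeplitz recursion $Y^{i,i'}_{j,j'} = Y^{i,i'}_{j-1,j'-1}$. The remaining coefficient identities come from the two edges on which $X_{\bolda}$ vanishes: the case $j' = 1$ yields $Y^{i,i'}_{j,1} = 0$ for all $j \ge 2$, and comparing coefficients of $f^i_{\lambda_i}$ (to which the right-hand side contributes nothing, since $X_{\bolda}$ kills $f^i_{\lambda_i}$) yields $Y^{i,i'}_{\lambda_i, j'} = 0$ for $1 \le j' \le \lambda_{i'}-1$.

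The stated form then follows by propagating the recursion to the boundary. If $j > j'$, iterating $Y^{i,i'}_{j,j'}=Y^{i,i'}_{j-1,j'-1}$ backwards reaches the first-column entry $Y^{i,i'}_{j-j'+1, 1}$, which vanishes. If $\lambda_i - j < \lambda_{i'} - j'$, iterating forwards (valid as long as both indices remain in range) reaches the last-row entry $Y^{i,i'}_{\lambda_i,\, \lambda_i + j' - j}$ with $\lambda_i + j' - j \le \lambda_{i'}-1$, which again vanishes. In the remaining range, the recursion gives exactly the second condition of the lemma. The converse direction is automatic, because the interior recursion and the two boundary vanishings were derived by equivalences from the coefficient identities of $[X_{\bolda},Y]=0$.

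Reading off the pictorial form of each block $Y^{i,i'}$ then reduces to a case analysis on $\lambda_i \gtreqless \lambda_{i'}$: in the first case the free parameters are constant along diagonals and fill an upper-triangular Toeplitz pattern in the top $\lambda_{i'}$ rows, with the last $\lambda_i - \lambda_{i'}$ rows zero; in the second case they fill a Toeplitz pattern in the rightmost $\lambda_i$ columns, with the lower-left triangle zero. The main obstacle is purely bookkeeping, namely keeping track of which boundary (first column or last row) is reached first when iterating the recursion in each regime; nothing subtle occurs.
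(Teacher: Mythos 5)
Your proposal is correct, and since the paper itself does not give a proof of this lemma (it simply cites Turnbull--Aitken and Basili), your direct computation is the natural argument those references rely on. Let me record the one bookkeeping point worth making explicit. Equating coefficients of $f^i_j$ in $(YX_{\bolda})(f^{i'}_{j'}) = (X_{\bolda}Y)(f^{i'}_{j'})$ yields exactly three families of identities: the interior recursion $Y^{i,i'}_{j,j'} = Y^{i,i'}_{j-1,j'-1}$ for $2\le j\le\lambda_i$, $2\le j'\le\lambda_{i'}$; the last-row vanishing $Y^{i,i'}_{\lambda_i, j'}=0$ for $1\le j'\le\lambda_{i'}-1$ (from the $j=\lambda_i$ coefficient, where the $X_{\bolda}Y$ side contributes nothing); and the first-column vanishing $Y^{i,i'}_{j,1}=0$ for $2\le j\le\lambda_i$ (from $j'=1$, where the $YX_{\bolda}$ side contributes nothing). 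Your propagation argument then shows these three families are equivalent to the two conditions in the lemma: iterating backwards when $j>j'$ reaches the first column at index $(j-j'+1,1)$ with $j-j'+1\ge 2$, iterating forwards when $\lambda_i-j<\lambda_{i'}-j'$ reaches the last row at index $(\lambda_i,\lambda_i+j'-j)$ with $\lambda_i+j'-j\le\lambda_{i'}-1$, and in the remaining range ($2\le j\le j'$ and $\lambda_i-j\ge\lambda_{i'}-j'$) the recursion applies directly since $j-1,j'-1\ge 1$. For the converse, one checks that the vanishing locus $\{j>j'\}\cup\{\lambda_i-j<\lambda_{i'}-j'\}$ is stable under decrementing both indices, so conditions A and B of the lemma jointly reproduce the full interior recursion together with the two boundary vanishings. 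The pictorial form is then a straightforward consequence: when $\lambda_i\ge\lambda_{i'}$ the inequality $\lambda_i-j<\lambda_{i'}-j'$ is implied by $j>j'$, giving the upper-triangular Toeplitz block with $\lambda_i-\lambda_{i'}$ trailing zero rows; when $\lambda_i\le\lambda_{i'}$ the inequality $j>j'$ is implied by $\lambda_i-j<\lambda_{i'}-j'$, giving the rightmost $\lambda_i$-column Toeplitz band.
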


\begin{proof}
See \cite{TA} or \cite[Lemma~3.2]{Bas00} for a more recent account.
\end{proof}


Fix $\bolda\in \mathcal P(n)$.
For each length $\ell\in \N^{*}$ appearing in $\bolda$ (i.e. $\exists
i\in [\![1,d_{\bolda}]\!], \lambda_i=\ell$), we define
$\tau_{\ell}=\sharp\{i| \lambda_i={\ell}\}$.  Let
$W_{\ell}:=
\langle f^i_1| \lambda_i\geqslant \ell\rangle$. This is a filtration of $W:=W_1=\langle f^i_1|i\in [\![1,d_{\bolda}]\!]\rangle$ whose associated grading is given by the subspaces $W'_{\ell}:=\langle f^i_1| \lambda_i\geqslant \ell\rangle/\langle f^i_1| \lambda_i> \ell\rangle$ of dimension $\tau_{\ell}$.

It follows from Lemma \ref{TA} that each $W_{\ell}$ is stable under $M_n^{X_{\bolda}}$. 
Hence we have a Lie algebra morphism $M_{n}^{X_{\bolda}}\stackrel{pr_{ext}}\longrightarrow M_{d_{\bolda}}$
where the extracted matrix $pr_{ext}(Y)=Y_{ext}:=(Y_{1,1}^{i,i'})_{i,i'}$ can be seen as the element induced by $Y$ on $W=\Ker X_{\bolda}$.

\begin{lm}\label{extpara}
The image $(M_n^{X_{\bolda}})_{ext}$ of the morphism $pr_{ext}$ is the parabolic subalgebra 
\[\{Z\in M_{d_{\bolda}}\,|\,Z(W_{\ell})\subset W_{\ell}, \, \forall \ell\in \N^*\}.\]
\end{lm}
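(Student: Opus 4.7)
The plan is to prove the two inclusions separately, in both directions using the explicit block description supplied by Lemma~\ref{TA}; the Toeplitz shape of each block $Y^{i,i'}$ makes everything transparent.

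For the inclusion $(M_n^{X_{\bolda}})_{ext} \subset \{Z \mid Z(W_\ell) \subset W_\ell, \forall \ell\}$, I would argue as follows. Since $pr_{ext}(Y)$ is nothing but the restriction of $Y$ to $W = \Ker X_{\bolda}$, it suffices to show that each $W_\ell$ is stable under $M_n^{X_{\bolda}}$ (which is already noted just before the statement). Concretely, for $f^{i'}_1 \in W_\ell$ (so $\lambda_{i'} \geq \ell$), one computes $Y_{ext}(f^{i'}_1) = \sum_i Y^{i,i'}_{1,1} f^i_1$, and Lemma~\ref{TA} forces $Y^{i,i'}_{1,1} = 0$ whenever $\lambda_i - 1 < \lambda_{i'} - 1$, that is, whenever $\lambda_i < \lambda_{i'}$. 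In particular, if $\lambda_i < \ell$, then $\lambda_i < \ell \leq \lambda_{i'}$ gives $Y^{i,i'}_{1,1} = 0$, so $Y_{ext}(f^{i'}_1) \in W_\ell$.

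For the reverse inclusion, given $Z \in M_{d_{\bolda}}$ stabilizing every $W_\ell$, I would explicitly construct a lift $Y \in M_n^{X_{\bolda}}$ with $pr_{ext}(Y) = Z$. Applying the stability condition with $\ell = \lambda_{i'}$ forces $Z^{i,i'} = 0$ whenever $\lambda_i < \lambda_{i'}$. Define $Y$ blockwise in the form prescribed by Lemma~\ref{TA}: when $\lambda_i \geq \lambda_{i'}$, take the Toeplitz block $Y^{i,i'}$ whose single free parameter on the main diagonal is $Y^{i,i'}_{1,1} = Z^{i,i'}$, with the remaining independent parameters $Y^{i,i'}_{1,j'}$ ($j' \geq 2$) all set to zero; when $\lambda_i < \lambda_{i'}$, set $Y^{i,i'} = 0$, which is consistent since $Z^{i,i'} = 0$. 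By construction $Y$ has exactly the shape described in Lemma~\ref{TA}, so $Y \in M_n^{X_{\bolda}}$ and $Y_{ext} = Z$.

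To conclude, note that the set on the right-hand side is by definition the stabilizer in $\gl(W) \cong M_{d_{\bolda}}$ of the flag $W_1 \supset W_2 \supset \cdots$, hence is a parabolic subalgebra. There is no real obstacle in the argument: everything reduces to reading the indexing conditions in Lemma~\ref{TA}, and the hardest (but still routine) step is checking that the blockwise recipe above does produce a valid element of $M_n^{X_{\bolda}}$, which is immediate from the Toeplitz description.
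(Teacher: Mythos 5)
Your proof is correct and uses exactly the approach the paper intends: the paper's entire proof is ``It is an immediate consequence of Lemma~\ref{TA},'' and your argument is precisely the unpacking of that remark --- reading off the vanishing condition $Y^{i,i'}_{1,1}=0$ when $\lambda_i<\lambda_{i'}$ from Lemma~\ref{TA} for one inclusion, and inverting it by the Toeplitz lift $Y^{i,i'}_{1,1}=Z^{i,i'}$, $Y^{i,i'}_{1,j'}=0$ for $j'\geq 2$, for the other. Nothing to add; the details you spelled out are exactly what makes the ``immediate consequence'' immediate.
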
 
\begin{proof}
It is an immediate consequence of Lemma \ref{TA}.
\end{proof}

Similarily we define the surjective (cf. Lemma \ref{TA}) maps
$M_{n}^{X_{\bolda}}\stackrel{pr_{\ell}}\rightarrow M_{\tau_{\ell}}=:
M_n^{X_{\bolda}} (\ell)\cong \gl(W_{\ell}')$ 
where 
\begin{equation}pr_{\ell}(Y)=Y(\ell):=(Y_{1,1}^{i,i'})_{((i,i')|\lambda_i=\lambda_{i'}=\ell)}\end{equation} can be seen as the element induced by $Y$ on $W'_{\ell}$.
We also define
$(M_n^{X_{\bolda}})\red:=\prod_{\ell} M_n^{X_{\bolda}}(\ell)$ and $pr\red$ as the surjective map: $\func{M_n^{X_{\bolda}}}{(M_n^{X_{\bolda}})\red}{Y}{Y\red=\prod_{\ell} Y(\ell)}$.
We have a natural section $\phi: (M_n^{X_{\bolda}})\red\rightarrow M_{n}^{X_{\bolda}}$ of the Lie algebra morphism $pr\red$ by setting 
$Z^{i,i'}_{j,j'}:=\alter{Y^{i,i'}}{ if $j=j'$, $\lambda_i=\lambda_{i'}$}{0}{ else}$  and $\phi((Y^{i,i'})_{i,i'}):=(Z^{i,i'}_{j,j'})_{(i,j),(i',j')}$. Hence, we can view $(M_n^{X_{\bolda}})\red$ as a subalgebra of $M_n^{X_{\bolda}}$ and 
\begin{equation}M_n^{X_{\bolda}}\stackrel{v.s.}{\cong}
  (M_n^{X_{\bolda}})\red\oplus \mathfrak n,\label{trois}
\end{equation}
where $\nf:=\Ker (pr\red)$.
A similar decomposition holds for $pr_{ext}$:  $M_n^{X_{\bolda}}\stackrel{v.s.}{\cong} (M_n^{X_{\bolda}})_{ext}\oplus 
\mathfrak n_1$ where $\nf_1:=\Ker (pr_{ext})$.

\begin{prop}\label{redred}
\item[(i)]$Y\in M_n^{X_{\bolda}}$ is nilpotent if and only if $Y\red$ is. In other words $(M_n^{X_{\bolda}})^{nil}\cong (M_n^{X_{\bolda}})\red^{nil}\times \mathfrak \nf_1$
\item[(ii)] $Y\in M_n^{X_{\bolda}}$ is nilpotent if and only if each $Y(\ell)\in M_{\tau_{\ell}}$ is.
\item[(iii)] $(M_n^{X_{\bolda}})^{nil}$ is an irreducible subvariety of $M_n^{X_{\bolda}}$ of codimension $d_{\bolda}$. 
\end{prop}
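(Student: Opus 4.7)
\textit{Proof plan.} The plan is to first establish the characteristic polynomial identity
\[\det(tI - Y \mid V) = \prod_\ell \det(tI - Y(\ell))^{\ell}\]
for every $Y \in M_n^{X_{\bolda}}$, and then to deduce all three parts of the proposition. To establish the identity, I would consider the $Y$-stable filtration $0 \subsetneq \ker X_{\bolda} \subsetneq \ker X_{\bolda}^2 \subsetneq \dots \subsetneq V$ (stability follows from $[Y, X_{\bolda}] = 0$). For each $k$, multiplication by $X_{\bolda}^{k-1}$ induces a $Y$-equivariant linear isomorphism $\ker X_{\bolda}^k / \ker X_{\bolda}^{k-1} \xrightarrow{\sim} W_k$: the image is $W_k$ by inspection of $X_{\bolda}^{k-1}(f^i_k)$, and equivariance follows from $Y X_{\bolda}^{k-1} = X_{\bolda}^{k-1} Y$. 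Hence the action of $Y$ on the $k$-th graded piece of the filtration is conjugate to $Y_{ext}|_{W_k}$, and multiplying characteristic polynomials along both this filtration and the secondary filtration $W = W_1 \supset W_2 \supset \dots$ of $W$ (whose graded quotients carry $Y(\ell)$) yields the identity.

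Statement (ii) is an immediate consequence: $Y$ is nilpotent iff $\det(tI - Y \mid V) = t^n$, iff each $Y(\ell)$ is nilpotent. For (i), I would use the vector space splitting $M_n^{X_{\bolda}} = (M_n^{X_{\bolda}})_{red} \oplus \nf$ provided by the section $\phi$ of $pr_{red}$. Writing $Y = \phi(pr_{red}(Y)) + Y_{\nf}$, the map $Y \mapsto (pr_{red}(Y), Y_{\nf})$ is a variety isomorphism onto $(M_n^{X_{\bolda}})_{red} \times \nf$; since by (ii) the nilpotency of $Y$ depends only on $pr_{red}(Y)$, this restricts to the desired isomorphism $(M_n^{X_{\bolda}})^{nil} \xrightarrow{\sim} (M_n^{X_{\bolda}})_{red}^{nil} \times \nf$.

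Finally, (iii) follows formally from (i). Since $(M_n^{X_{\bolda}})_{red} \cong \prod_\ell M_{\tau_\ell}$, Lemma~\ref{commons}(i) implies that $(M_n^{X_{\bolda}})_{red}^{nil} = \prod_\ell M_{\tau_\ell}^{nil}$ is irreducible of codimension $\sum_\ell \tau_\ell = d_{\bolda}$ in $(M_n^{X_{\bolda}})_{red}$. Multiplying by the affine space $\nf$ preserves irreducibility and codimension, so $(M_n^{X_{\bolda}})^{nil}$ is irreducible of codimension $d_{\bolda}$ in $M_n^{X_{\bolda}}$.

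The main technical obstacle is the characteristic polynomial identity, and more specifically the verification that $X_{\bolda}^{k-1}$ descends to a well-defined, $Y$-equivariant isomorphism from the $k$-th graded piece of the kernel filtration onto $W_k$; this requires careful bookkeeping with the basis $(f^i_j)$ and Lemma~\ref{TA}. Once the identity is in hand, (i) and (iii) follow from Lemmas~\ref{commons} and~\ref{extpara} by formal manipulation.
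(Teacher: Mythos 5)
Your proof is correct, and it is close in spirit to the paper's argument but uses a different mechanism for the crucial step (ii). The paper assigns the weight $w(f^i_j)=(j-\lambda_i,j)$ to each basis vector, orders the basis lexicographically by weight, and reads off from Lemma~\ref{TA} that $Y$ becomes block-upper-triangular in the reordered basis, with the diagonal block attached to a given weight $(\cdot,j)$ of length $\ell$ equal to $Y(\ell)$; each $Y(\ell)$ thus appears $\ell$ times along the diagonal, which gives the equivalence of nilpotency of $Y$ with that of all the $Y(\ell)$. You instead factor the characteristic polynomial along two nested filtrations: the kernel filtration $0\subset\ker X_{\bolda}\subset\ker X_{\bolda}^2\subset\cdots\subset V$, whose $k$-th graded piece is carried $Y$-equivariantly onto $W_k$ by $X_{\bolda}^{k-1}$, and then the $W_\ell$-filtration of $W=\ker X_{\bolda}$, yielding $\det(tI-Y\mid V)=\prod_\ell\det(tI-Y(\ell))^{\ell}$. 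Both arguments encode the same block-triangular structure: yours is somewhat more intrinsic (it works with invariant filtrations rather than an explicit basis reordering) at the price of the extra verification that $X_{\bolda}^{k-1}$ descends to a $Y$-equivariant isomorphism onto $W_k$, while the paper's is more hands-on with the combinatorics of Lemma~\ref{TA}. Parts (i) and (iii) are handled in essentially the same way in both proofs. One small remark: the statement of (i) in the paper writes $\nf_1=\Ker(pr_{ext})$, whereas the isomorphism you construct (correctly, and as a dimension count against (iii) confirms) is with $\nf=\Ker(pr_{red})$; this is a typo in the paper's statement, and your version is the right one.
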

\begin{proof}
We associate to each basis element $f_{j}^i$ the weight
$w(f_{j}^i):=(j-\lambda_i,j)$. We order the weights
lexicographically. Lemma \ref{TA} asserts that $Y\in
M_n^{X_{\bolda}}$
is parabolic with respect to these weights, \emph{i.e.}
$Y(f_{b}^a)=\sum_{w(f_{b'}^{a'})\leq w(f_{b}^a)} c_{b'}^{a'}f_{b'}^{a'}$.
Remark that two elements $f_{j}^{i}$ and $f_{j'}^{i'}$ have the same weight if and only if
$\lambda_i=\lambda_{i'}$ and $j=j'$. We order the basis 
$f_{j}^{i}$ with respect to their weight. The base change from the
$f_{j}^{i}$ lexicographically ordered by their index $(i,j)$ to the
$f_{j}^{i}$ ordered by their weight transforms the matrix $Y$ into a
matrix $Z$.

Let $w$ be a weight and
$f_{j}^{i_1},f_{j}^{i_2},\dots f_{j}^{i_k}$ be the elements with weight
$w$ and $\ell:=\lambda_{i_m}$ (for any $m\in[\![1,k]\!]$). 
The diagonal block of $Z$ corresponding to the weight $w$ 
is $Y(\ell)$ (Lemma \ref{TA}). In other words, $Z$ and $Y\red$ have the same diagonal
blocks $Y(\ell)$, the difference being that the same block is repeated $\ell$
times in $Z$. In conclusion, $Y\red$ is nilpotent iff its diagonal
blocks $Y(\ell)$ are nilpotent, 
iff $Z$ and $Y$ are nilpotent. This proves $i)$ and $ii)$. 
Since $(M_n^{X_{\bolda}})\red\cong \prod_{\ell\in \N^*}M_{\tau_{\ell}}$ and $\sum_{\ell\in \N^*}\tau_{\ell}=d_{\bolda}$, Lemma \ref{commons} (i) allows us to conclude.
\end{proof}


%

In the Lie algebra vocabulary, $(M_n^{X_{\bolda}})\red$ is a reductive part (in $M_n$) of the centraliser of $X_{\bolda}$ in $M_n$ and $\mathfrak n$ is its nilpotent radical so \eqref{trois} can be written as a Lie algebra isomorphism $M_n^{X_{\bolda}}\stackrel{v.s.}{\cong} (M_n^{X_{\bolda}})\red\ltimes \mathfrak n$. See \cite{Pr} for an analogue of Proposition \ref{redred} (ii) valid for a general reductive Lie algebra.

\begin{ex}
Let $n=12$, $\bolda=(4,2,2,2,1,1)$ hence
$$X_{\bolda}={\footnotesize \left(\begin{array}{c@{\;\;}c@{\;\;}c@{\;\;}c@{\;}| c@{\;\;} c@{\;} |c@{\;\;} c@{\;}| c@{\;\;} c@{\;}| c@{\;}| c@{\;}} 
0&1&&&&&&&&&&\\ 
&0&1&&&&&&&&&\\ 
&&0&1&&&&&&&&\\
&&&0&&&&&&&&\\
\hline
&&&&0&1&&&&&&\\
&&&&&0&&&&&&\\
\hline
&&&&&&0&1&&&&\\
&&&&&&&0&&&&\\
\hline
&&&&&&&&0&1&&\\
&&&&&&&&&0&&\\
\hline
&&&&&&&&&&0&\\
\hline
&&&&&&&&&&&0\\
\end{array}\right)}, \quad M_n^{X_{\bolda}}\ni Y=
{\footnotesize \left(\begin{array}{c@{\;}c@{\,}c@{\,}c@{}| c@{\,}c@{} |c @{\,}c@{}| c@{\,}c@{}| c@{\,}| c} 
a&b&c&d&h_1&i_1&h_2&i_2&h_3&i_3&j_1&j_2\\ 
&a&b&c&&h_1&&h_2&&h_3&&\\
&&a&b&&&&&&&&\\
&&&a&&&&&&&&\\
\hline
&&h_4&i_4&e_1&f_1&k_1&l_1&k_2&l_2&m_1&m_2\\
&&&h_4&&e_1&&k_1&&k_2&&\\
\hline
&&h_5&i_5&k_3&l_3&e_2&f_2&k_4&l_4&m_3&m_4\\
&&&h_5&&k_3&&e_2&&k_4&&\\
\hline
&&h_6&i_6&k_5&l_5&k_6&l_6&e_3&f_3&m_5&m_6\\
&&&h_6&&k_5&&k_6&&e_3&&\\
\hline
&&&j_3&&m_7&&m_8&&m_9&g_1&n_1\\
\hline
&&&j_4&&m_{10}&&m_{11}&&m_{12}&n_2&g_2\\
\end{array}\right)}.$$
Here $d_{\bolda}=6$, $\tau_4=1$, $\tau_2=3$, $\tau_1=2$ and 

$$(M_n^{X_{\bolda}})_{ext}\ni Y_{ext}={\footnotesize \left(\begin{array}{c @{\;\;}|c@{\;\;} c@{\;\;} c@{\;}| c@{\;\;} c@{\;}}
a&h_1&h_2&h_3&j_1&j_2\\
\hline
& e_1 & k_1 & k_2& m_1&m_2\\
& k_3 & e_2 & k_4& m_3&m_4\\
& k_5 & k_6 & e_3& m_5&m_6\\
\hline
&  &  & & g_1&n_1\\
&  &  & & n_2&g_2\\
\end{array}\right)}, \quad (M_n^{X_{\bolda}})\red\ni Y\red={\footnotesize \left(\begin{array}{c @{\;\;}|c@{\;\;} c@{\;\;} c@{\;}| c@{\;\;} c@{\;}}
a&&&&&\\
\hline
& e_1 & k_1 & k_2& &\\
& k_3 & e_2 & k_4& &\\
& k_5 & k_6 & e_3& &\\
\hline
&  &  & & g_1&n_1\\
&  &  & & n_2&g_2\\
\end{array}\right)}.$$ 

$$\begin{array}{c} M_1\cong (M_n^{X_{\bolda}})(4)\ni Y(4)=\left(\begin{array}{c}a\end{array}\right),\\
M_3\cong (M_n^{X_{\bolda}})(2)\ni Y(2)=\left(\begin{array}{ccc} e_1 &k_1&k_2\\ k_3&e_2&k_4\\k_5&k_6&e_3\end{array}\right),\end{array} 
M_2\cong(M_n^{X_{\bolda}})(1)\ni Y(1)=\left(\begin{array}{c c} g_1 &n_1\\n_2& g_2\end{array}\right). $$
$Y$ is nilpotent if and only if $Y_{ext}$ is nilpotent if and only if the three matrices $Y(\ell)$ are nilpotent. 
This corresponds to the six($=d_{\bolda}$) independent conditions $$\left\{ Tr(Y(4))=a=0, \quad \begin{array}{l} Tr(Y(2))=0\\  e_1e_2+e_2e_3+e_3e_1-k_1k_3-k_4k_6-k_2k_5=0\\ \det(Y(2))=0\end{array},\quad \begin{array}{c} Tr(Y(1))=0\\ \det(Y(1))=0 \end{array}\right..$$
\end{ex}

\begin{defi} \label{defiwfrred}Let $\bolda\in \mathcal P(n)$ and $\wfr$ be a subspace of $M_n$ (\emph{e.g.} a Lie subalgebra of $M_n$ containing $X_{\bolda}$). We define the following vector spaces
$$\wfr^{X_{\bolda}}:=\wfr\cap M_n^{X_{\bolda}},\qquad\qquad 
(\wfr^{X_{\bolda}})\red:=\{Y\red| Y\in \wfr^{X_{\bolda}}\}.$$\end{defi} 

The following lemmas relate the geometry of $(\wfr^{X_{\bolda}})^{nil}$ to the one of $(\wfr^{X_{\bolda}})^{nil}\red$ or $(\wfr^{X_{\bolda}}(\ell))^{nil}$

\begin{lm}
\item[(i)] There exists a vector space $\mathfrak  n_2$ 
such that 
the following isomorphisms of algebraic varieties holds
$$ \wfr^{X_{\bolda}}\cong  (\wfr^{X_{\bolda}})\red\times \mathfrak n_2, \qquad
(\wfr^{X_{\bolda}})^{nil}\cong (\wfr^{X_{\bolda}})\red^{nil}\times \mathfrak n_2.$$

\item[(ii)] \label{codimwfrnil}
$(\wfr^{X_{\bolda}})^{nil}$ is irreducible if and only if $(\wfr^{X_{\bolda}})^{nil}\red$ is and 
$$\codim_{\wfr^{X_{\bolda}}}(\wfr^{X_{\bolda}})^{nil}=\codim_{(\wfr^{X_{\bolda}})\red}(\wfr^{X_{\bolda}})\red^{nil}$$
\end{lm}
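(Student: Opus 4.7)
The plan is to reduce both parts of the statement to Proposition~\ref{redred}(i) via a single direct sum decomposition of $\wfr^{X_{\bolda}}$.

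First I would note that, by the very definition of $(\wfr^{X_{\bolda}})_{red}$ in Definition~\ref{defiwfrred}, the map $pr_{red}$ restricts to a linear surjection $\wfr^{X_{\bolda}} \twoheadrightarrow (\wfr^{X_{\bolda}})_{red}$. Let $\mathfrak{n}_2 := \wfr^{X_{\bolda}} \cap \Ker(pr_{red})$ be its kernel. Choosing any linear section $s$ of this surjection then yields a linear (hence algebraic) isomorphism
\[
\Phi \colon (\wfr^{X_{\bolda}})_{red} \times \mathfrak{n}_2 \stackrel{\sim}{\longrightarrow} \wfr^{X_{\bolda}}, \qquad (A,N) \longmapsto s(A) + N,
\]
which is the first isomorphism in (i). Note that the canonical section $\phi$ introduced before Proposition~\ref{redred} need not send $(\wfr^{X_{\bolda}})_{red}$ into $\wfr^{X_{\bolda}}$, so it cannot in general be reused; but any linear section does the job here since only the underlying affine variety structure is needed.

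For the second isomorphism in (i), I would observe that since $s$ is a section and $N \in \Ker(pr_{red})$, we have $pr_{red}(\Phi(A,N)) = A$. Applying Proposition~\ref{redred}(i) to $\Phi(A,N) \in M_n^{X_{\bolda}}$, the element $\Phi(A,N)$ is therefore nilpotent if and only if $A$ is. Hence $\Phi$ restricts to an isomorphism $(\wfr^{X_{\bolda}})_{red}^{nil} \times \mathfrak{n}_2 \stackrel{\sim}{\longrightarrow} (\wfr^{X_{\bolda}})^{nil}$, as required.

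Part (ii) then follows routinely from (i): the space $\mathfrak{n}_2$ is an affine space, hence irreducible, and the product of an algebraic variety $Z$ with an irreducible affine space is irreducible if and only if $Z$ is, which settles the irreducibility equivalence. The codimension identity is just a dimension count on each side of the product decompositions in (i), the summand $\dim \mathfrak{n}_2$ appearing in both $\dim \wfr^{X_{\bolda}}$ and $\dim (\wfr^{X_{\bolda}})^{nil}$ (computed on any component in the possibly reducible case) and cancelling in the difference. I do not anticipate any serious obstacle: the entire nontrivial content is packaged in Proposition~\ref{redred}(i), and the remaining work is elementary linear algebra and affine geometry.
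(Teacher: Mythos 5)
Your proof is correct and takes essentially the same route as the paper: you set $\mathfrak{n}_2 := \Ker\bigl((pr_{red})|_{\wfr^{X_{\bolda}}}\bigr)$, split the linear surjection $\wfr^{X_{\bolda}} \twoheadrightarrow (\wfr^{X_{\bolda}})_{red}$ with an arbitrary section $s$, and then invoke Proposition~\ref{redred}(i) to see that the resulting isomorphism respects the nilpotent loci, with (ii) following by the standard product argument. The paper's own proof is the same argument in telegraphic form; your observation that the canonical section $\phi$ need not carry $(\wfr^{X_{\bolda}})_{red}$ into $\wfr^{X_{\bolda}}$, and that any linear section suffices because only the variety structure matters, is a worthwhile clarification that the paper leaves implicit.
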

\begin{proof}
(i) 
 Let  $\mathfrak n_2=\Ker ( (pr\red)_{|\wfr^{X_{\bolda}}})$.
The first equation follows and the statement about nilpotent elements is a consequence of Proposition \ref{redred} (i).\\
(ii) is a consequence of (i).
\end{proof}


Let $\wfr^{X_{\bolda}}(\ell):=pr_{\ell}(\wfr^{X_{\bolda}})=\{Y(\ell)\,|\, Y\in\wfr^{X_{\bolda}}\}\subseteq M_{\tau_{\ell}}$. We have a natural analogue of Proposition \ref{redred} (iii) in this case under some necessary restrictions.
\begin{lm}\label{wfrnil}
Let $\wfr$ be a subspace of $M_n$ such that the decomposition $\wfr^{X_{\bolda}}\red=\prod_{\ell} (\wfr^{X_{\bolda}})(\ell)$ holds.
\item[(i)]  
The variety $(\wfr^{X_{\bolda}})^{nil}$ is irreducible if and only if each $(\wfr^{X_{\bolda}}(\ell))^{nil}$ is and 
$$\codim_{\wfr^{X_{\bolda}}} (\wfr^{X_{\bolda}})^{nil}=\sum_{\ell} \codim_{\wfr^{X_{\bolda}}(\ell)}(\wfr^{X_{\bolda}}(\ell))^{nil}.$$
\item[(ii)] In particular if, for each $\ell$,
  $\wfr^{X_{\bolda}}(\ell)$ is isomorphic to $M_{\tau_{\ell}}$,
  $\pp_{k',\tau_{\ell}}$ or $\qq_{k',\tau_{\ell}}$ ($1\leqslant
  k' \leqslant \tau_{\ell}$)
then $(\wfr^{X_{\bolda}})^{nil}$ is irreducible and $\codim_{\wfr^{X_{\bolda}}} (\wfr^{X_{\bolda}})^{nil}=d_{\bolda}$.

\end{lm}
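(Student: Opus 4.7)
\medskip

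\noindent\textbf{Proof plan.} For part $(i)$, the strategy is to reduce everything to a product decomposition and then use that irreducibility and codimension behave well under products. By the preceding lemma, there is an isomorphism of varieties $(\wfr^{X_{\bolda}})^{nil}\cong (\wfr^{X_{\bolda}})^{nil}_{red}\times\nf_2$, so it suffices to treat $(\wfr^{X_{\bolda}})^{nil}_{red}$. By Proposition \ref{redred}$(ii)$ applied to elements of $\wfr^{X_{\bolda}}\subseteq M_n^{X_{\bolda}}$, an element $Y\in\wfr^{X_{\bolda}}$ is nilpotent if and only if each $Y(\ell)=pr_\ell(Y)$ is nilpotent. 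Combined with the standing hypothesis $\wfr^{X_{\bolda}}_{red}=\prod_\ell \wfr^{X_{\bolda}}(\ell)$, this gives the product decomposition
\[
(\wfr^{X_{\bolda}})^{nil}_{red}\;=\;\prod_\ell \bigl(\wfr^{X_{\bolda}}(\ell)\bigr)^{nil}.
\]
A finite product of varieties is irreducible iff each factor is, and the codimension in the ambient product is the sum of the codimensions. Since $\nf_2$ contributes no codimension, this yields both statements of $(i)$.

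For part $(ii)$, I would just plug in the structure results already established. In each of the three possible cases, $\wfr^{X_{\bolda}}(\ell)\cong M_{\tau_\ell}$, $\pp_{k',\tau_\ell}$, or $\qq_{k',\tau_\ell}$, Lemma \ref{commons}$(i)$ (for the full matrix algebra) and Lemma \ref{commons}$(iii)$ (which covers arbitrary parabolic subalgebras of $M_{\tau_\ell}$, so in particular both $\pp_{k',\tau_\ell}$ and $\qq_{k',\tau_\ell}$) give that $(\wfr^{X_{\bolda}}(\ell))^{nil}$ is an irreducible subvariety of codimension $\tau_\ell$. Applying $(i)$, the variety $(\wfr^{X_{\bolda}})^{nil}$ is irreducible of codimension
\[
\sum_\ell \tau_\ell \;=\; d_{\bolda},
\]
by the very definition of $d_{\bolda}$ as the number of parts of $\bolda$ and of $\tau_\ell$ as the multiplicity of the part $\ell$.

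\medskip

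\noindent\textbf{Expected difficulty.} There is essentially no obstacle: the real work has already been done in Proposition \ref{redred} and Lemma \ref{commons}. The only point to be careful about is that $(i)$ uses the (non-trivial) hypothesis that the morphism $pr_{red}$ actually splits $\wfr^{X_{\bolda}}_{red}$ as the product of its $\ell$-components, and that in $(ii)$ each listed model algebra is indeed a parabolic subalgebra of some $M_{\tau_\ell}$, so that Lemma \ref{commons}$(iii)$ applies uniformly with codimension equal to the size of that block.
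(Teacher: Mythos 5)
Your proof is correct and follows the same route the paper takes: part $(i)$ is reduced via the preceding lemma (the isomorphism $(\wfr^{X_{\bolda}})^{nil}\cong(\wfr^{X_{\bolda}})^{nil}_{red}\times\nf_2$) together with Proposition~\ref{redred}$(ii)$ and the product hypothesis, and part $(ii)$ is then an application of Lemma~\ref{commons}$(i)$ and $(iii)$ with $\sum_\ell\tau_\ell=d_{\bolda}$. The paper's proof is just a terser citation of the same two lemmas, so you have essentially filled in the implicit details rather than found a different argument.
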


\begin{proof}
(i) follows from Lemma \ref{codimwfrnil}.\\
(ii) is then a consequence of Lemma \ref{commons}.
\end{proof}

\begin{rem}
\label{reml1l2}
The previous lemma is in general sufficient for our applications.
But, in some cases, we have $(\wfr^{X_{\bolda}})\red\subsetneq\prod_{\ell}\wfr^{X_{\bolda}}(\ell)$. A slightly less precise decomposition may remain valid in these cases. Define $\wfr^{X_{\bolda}}(\ell_1,\ell_2):=pr_{\ell_1,\ell_2}(\wfr^{X_{\bolda}})=\{(Y(\ell_1),Y(\ell_2))| Y\in\wfr^{X_{\bolda}}\}\subseteq \wfr^{X_{\bolda}}(\ell_1)\times \wfr^{X_{\bolda}}(\ell_2)$. 
Assume that there exists a decomposition of the form $\wfr^{X_{\bolda}}\red=(\wfr^{X_{\bolda}})(\ell_1, \ell_2)\times\prod_{\ell\notin\{\ell_1,\ell_2\}} (\wfr^{X_{\bolda}})(\ell)$.\\ 
Then $(\wfr^{X_{\bolda}})^{nil}$ is irreducible if and only if $(\wfr^{X_{\bolda}}(\ell_1, \ell_2))^{nil}$ and each $(\wfr^{X_{\bolda}}(\ell))^{nil}$ are. Then 
\begin{eqnarray}\label{decellgen}\codim_{\wfr^{X_{\bolda}}} (\wfr^{X_{\bolda}})^{nil}&=&\codim_{\wfr^{X_{\bolda}}(\ell_1, \ell_2)}(\wfr^{X_{\bolda}}(\ell_1, \ell_2))^{nil}\notag\\&&+\sum_{\ell\notin\{\ell_1,\ell_2\}} \codim_{\wfr^{X_{\bolda}}(\ell)}(\wfr^{X_{\bolda}}(\ell))^{nil}.\end{eqnarray}
\end{rem}

\section{Irreducibility of $\MN(\pp_{1,n})$ and $\sxno {n-1}$}
\label{secirrnmoins1}

The aim of this section is to prove that $\MN(\pp_{1,n})$ is
irreducible (Theorem \ref{irr1n}). We obtain 
as a corollary that a necessary and sufficient condition for 
the irreducibility of 
 $\MN(\pp_{k,n})$ and $\sxno{k}$ is 
$k\in \{0,1,n-1,n\}$ (Theorem \ref{thmCNSIrred}).

In this section, we will use the simplifying notation
$\pp:=\pp_{1,n}$. The strategy is the following. We introduce a 
variety $\mathcal M(\pp)$ of almost commuting matrices. 
Since $\mathcal M(\pp)$ is easily described as a graph, we get its irreducibility 
and its dimension. The dimensions of the components of $\MN(\pp)$ are controlled
through the equations defining $\MN(\pp)$ in $\mathcal
M(\pp)$. From this dimension estimate, we have a small list of candidates 
to be an irreducible component. We finally show that only one element
in this list defines an irreducible component.

In this section we assume $n\geqslant 2$.
%
%
Recall that $(e_1,\dots,e_n)$ is the canonical basis of $V=\K^n$,
$V_i=\langle e_1,\dots,e_i\rangle$. Also, we note $U_i:=\langle
e_{i+1},\dots,e_n\rangle$. 
We will mostly be interested in this section by $V_1= \K e_1$ and $U_1=\langle e_{2},\dots,e_n\rangle$. 
Recall also that $\pp=\pp_{1,n}=\{X\in \gl(V)\mid X(V_1)\subset V_1\}$.  
By virtue of Proposition \ref{corresirred}, we can study $\MN(\pp)$ in order to get informations on $\sxno{n-1}$. 

We have 
\begin{equation}\pp \stackrel{v.s.}{=}\gl(V_1)\oplus \Hom(U_1,V_1)\oplus \gl(U_1)\cong \K\oplus M_{1,n-1}\oplus M_{n-1}\label{decp0}\end{equation}

With respect to this decomposition, for any $X\in\pp$, we set $X=X_1+X_2+X_3$ where $X_1:=X_{\mid V_1}\in \gl(V_1)\cong \K$, $X_2\in \Hom(U_1,V_1)\cong M_{1,n-1}$ and $X_3\in \gl(U_1)\cong M_{n-1}$. That is
 
\begin{equation}\label{p} 
X=\decp{X_1}{X_2}{X_3}
\end{equation}
We will often identify $\Hom(U_1,V_1)$ with $E:=\langle {}^te_2,\dots,{}^te_n\rangle$.
Define 
$$\MCM(\pp):=\left\{(X,Y,j)\left| \begin{array}{c} (X,Y)\in \pp^2 , j\in \Hom(U_1,V_1) \\ X,Y \textrm{ nilpotent } \end{array},\; 
[X,Y]-\decp{0}{j}{(0)}
=0\right.
\right\}$$

The following Proposition and Corollary are prototypes for several
similar results of Section \ref{seclowbound}. The main ideas for this approach are taken from \cite{Zo}.
\begin{prop} \label{thmZo}
If $n\geqslant 2$, then $\MCM(\pp)$ is irreducible of dimension $n^2-2$ 
\end{prop}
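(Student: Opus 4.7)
The plan is to exhibit $\MCM(\pp)$ as a trivial affine bundle over the commuting variety $\MN(M_{n-1})$ and then invoke Theorem \ref{irrcomvar}. First I would write down the block decomposition. Using \eqref{p}, any $X\in\pp$ decomposes as $(x_1,x_2,x_3)$ with $x_1\in\K$, $x_2\in M_{1,n-1}$, $x_3\in M_{n-1}$; similarly for $Y$. A direct computation gives
\[
[X,Y]=\decp{0}{x_1 y_2+x_2 y_3-y_1 x_2-y_2 x_3}{[x_3,y_3]},
\]
so the upper-right $1{\times}(n-1)$ block is unconstrained (it simply defines the slack variable $j$), while the defining equation of $\MCM(\pp)$ forces only $[x_3,y_3]=0$. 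Moreover, by Lemma~\ref{commons}(ii), $X$ (resp.\ $Y$) is nilpotent if and only if $x_1=0$ and $x_3$ is nilpotent (resp.\ $y_1=0$ and $y_3$ nilpotent).

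Next I would assemble the resulting isomorphism of varieties
\[
\MCM(\pp)\;\xrightarrow{\sim}\;\MN(M_{n-1})\times M_{1,n-1}\times M_{1,n-1},
\qquad (X,Y,j)\mapsto \bigl((x_3,y_3),\,x_2,\,y_2\bigr),
\]
with inverse sending $((x_3,y_3),x_2,y_2)$ to the triple $(X,Y,j)$ where $X,Y$ have block form \eqref{p} with $x_1=y_1=0$, and $j:=x_2 y_3-y_2 x_3$. Both maps are morphisms, as $j$ is polynomial in the other entries.

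Finally I would read off the conclusion. By Theorem \ref{irrcomvar} applied to $M_{n-1}$ (which requires $n-1\geq 1$, whence the hypothesis $n\geq 2$), the variety $\MN(M_{n-1})$ is irreducible of dimension $(n-1)^2-1$. Since a product with affine space preserves irreducibility, $\MCM(\pp)$ is irreducible, of dimension
\[
(n-1)^2-1+2(n-1)=n^2-2.
\]
There is no substantial obstacle here: the only non-trivial input is the irreducibility of the classical nilpotent commuting variety $\MN(M_{n-1})$, and the rest is a routine block-matrix computation together with the observation that the ``defect'' $j$ is free and absorbs the upper-right constraint.
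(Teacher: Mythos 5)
Your proof is correct and takes essentially the same approach as the paper: you compute the commutator block-by-block, observe that the slack variable $j$ absorbs the upper-right constraint, identify $\MCM(\pp)$ with $\MN(M_{n-1})\times(M_{1,n-1})^2$ (the paper phrases this as a graph, which is the same thing), and conclude via Theorem~\ref{irrcomvar}. The only cosmetic difference is that you write out the general commutator formula including the $x_1y_2-y_1x_2$ terms before invoking $x_1=y_1=0$, whereas the paper substitutes that in from the start.
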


\begin{proof}

Let us compute
\begin{equation}[X,Y]=\decp{0}{X_2Y_3-Y_2X_3}{[X_3,Y_3]}.\label{crochXY}\end{equation}
Hence, we have an alternative definition of $\MCM(\pp)$:
\begin{equation}\label{paraNg}(X,Y,j)\in \MCM(\pp)\Leftrightarrow \left\{\begin{array}{c}(X_3,Y_3)\in\MN(\gl(U_1)),\\ {X_1=Y_1=0}, \\ j=X_2Y_3-Y_2X_3.\end{array}\right.\end{equation}
In other words, $\MCM(\pp)$ is isomorphic to the graph of the morphism 
$$\begin{array}{rcl}\MN(M_{n-1})\times (M_{1,n-1})^2&\rightarrow& M_{1,n-1}\\ ((X_3,Y_3),(X_2,Y_2))&\mapsto& X_2Y_3-Y_2X_3\end{array}.$$
and the result follows from Theorem \ref{irrcomvar}.
\end{proof}

\begin{coro}\label{thmZobis}
The dimension of each irreducible component of $\MN(\pp)$ is greater or equal than $n^2-n-1$.
\end{coro}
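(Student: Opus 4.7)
The plan is to realize $\MN(\pp)$ as a fiber of a natural projection out of the auxiliary variety $\MCM(\pp)$ introduced above, and then apply the standard dimension bound for fibers of a morphism.

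Concretely, I would consider the projection
\[\pi:\func{\MCM(\pp)}{\Hom(U_1,V_1)}{(X,Y,j)}{j}.\]
From the alternative description \eqref{paraNg} of $\MCM(\pp)$ and formula \eqref{crochXY}, the fiber $\pi^{-1}(0)$ consists of pairs $(X,Y)\in\pp^2$ with $X,Y$ nilpotent, $X_2Y_3-Y_2X_3=0$, and $[X_3,Y_3]=0$. Using moreover that any nilpotent $X\in\pp$ automatically satisfies $X_1=0$ (the block $X_1\in\gl(V_1)\cong\K$ is itself nilpotent, by Lemma \ref{commons} (ii)), this fiber is precisely $\MN(\pp)$.

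Since $\MCM(\pp)$ is irreducible of dimension $n^2-2$ by Proposition \ref{thmZo} and the target $\Hom(U_1,V_1)$ has dimension $n-1$, the classical bound on fiber dimensions of a morphism between varieties yields, for every irreducible component $F$ of any nonempty fiber of $\pi$,
\[\dim F\geqslant \dim\MCM(\pp)-\dim\Hom(U_1,V_1)=(n^2-2)-(n-1)=n^2-n-1.\]
Applied to the (nonempty, since it contains $(0,0)$) fiber $\pi^{-1}(0)=\MN(\pp)$, this gives the desired lower bound on the dimension of each component of $\MN(\pp)$.

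There is no genuine obstacle here: the corollary is essentially a direct consequence of Proposition \ref{thmZo} combined with the fiber-dimension theorem. The only point requiring care is the identification of $\pi^{-1}(0)$ with $\MN(\pp)$, which amounts to checking that the defining equations of $\MCM(\pp)$ specialize at $j=0$ exactly to the commutation relations defining $\MN(\pp)$; this is transparent from \eqref{crochXY}.
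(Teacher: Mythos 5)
Your proposal is correct and takes essentially the same approach as the paper: the paper embeds $\MN(\pp)$ into $\MCM(\pp)$ via $(X,Y)\mapsto(X,Y,0)$ and observes that it is cut out by the $n-1$ equations $j=0$, which is exactly your fiber $\pi^{-1}(0)$; the dimension bound you cite (fiber-dimension theorem) and the one the paper implicitly uses (cutting by $m$ equations drops dimension by at most $m$) are the same statement.
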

\begin{proof}
If $n=1$, the result is obvious.

Else, we embed $\begin{array}{r c l}\MN(\pp)&\hookrightarrow &\MCM(\pp)\\ (X,Y)&\mapsto& (X,Y,0)\end{array}$. Hence, $\MN(\pp)$ is defined in $\MCM(\pp)$ by the $n-1$ equations $0=j\in M_{1,n-1}$ (cf. \eqref{paraNg}). Then, we conclude with Proposition \ref{thmZo}.
\end{proof}
%

Let us consider the set of \emph{$1$-marked partitions} of $n$ 
\[\mathcal P'(n):=\{(\lambda_1,(\lambda_2\geqslant \dots\geqslant \lambda_{d_{\bolda}}))\mid \sum_{i=1}^{d_{\bolda}}\lambda_i=n, \lambda_1\geqslant 1\}.\] Given $\bolda\in \mathcal P'(n)$, we let  $g^i_j:=e_{(\sum_{\ell=1}^{i-1} \lambda_{\ell})+j}$ for $\left\{\begin{array}{c}1\leqslant i\leqslant d_{\bolda}, \\ 1\leqslant j\leqslant \lambda_i\end{array}\right.$  and we define $X_{\bolda}\in \pp$ via
\begin{equation}\label{Xboldap}X_{\bolda}(g^i_j)=\left \{\begin{array}{l l}g^i_{j-1} &\mbox{ if $j>1$,}\\ 0 & \mbox{ if $j=1$.}\end{array}\right.\end{equation}
Note that these $X_{\bolda}$ with $\bolda\in \mathcal P'(n)$ are a priori different from the $X_{\bolda}$ with $\bolda\in \mathcal P(n)$ in spite of the similar notation used.

\begin{lm}[Classification Lemma]\label{classp1n}
Let $P:=\{x\in \pp\mid \det x\neq0\}$ be the connected subgroup of $G$ with Lie algebra $\pp$ and let $X$ be a nilpotent element of $\pp$.\\
There exists a unique $\bolda\in \mathcal P'(n)$ such that $P\cdot X=P\cdot X_{\bolda}$.
\end{lm}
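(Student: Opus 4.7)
My approach exploits that for any nilpotent $X\in \pp$ one has $Xe_1=0$: the stability $X(V_1)\subset V_1$ forces $Xe_1=ce_1$ with $c\in\K$, and $c$ must be nilpotent, hence $c=0$. So $e_1\in\Ker X$, and I will show that the ``depth'' at which $e_1$ sits in the image filtration of $X$ controls the distinguished first part $\lambda_1$ of the $1$-marked partition, while the other parts come from the usual Jordan theory.

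\smallskip\noindent\textbf{Existence.} I set
\[
\lambda_1 := \max\{k\geqslant 1\mid e_1\in\Img X^{k-1}\},
\]
which is finite and $\geqslant 1$ since $e_1\neq 0$ and $X$ is nilpotent. Pick $v\in V$ with $X^{\lambda_1-1}v=e_1$; since $Xe_1=0$, one has automatically $X^{\lambda_1}v=0$, so $(v,Xv,\dots,X^{\lambda_1-1}v=e_1)$ is a Jordan chain of length exactly $\lambda_1$. I invoke the following refinement of the Jordan normal form theorem: any $u\in\Ker X\setminus\{0\}$ can be realised as the bottom of a Jordan block of size exactly $\max\{m\mid u\in\Img X^{m-1}\}$ in some Jordan basis of $V$. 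Applied to $u=e_1$, this extends the chain above into a full Jordan basis $(f^i_j)$ of $V$ with $f^1_1=e_1$, $f^1_j=X^{\lambda_1-j}v$ for $2\leqslant j\leqslant\lambda_1$, and remaining block sizes $\lambda_2\geqslant\cdots\geqslant\lambda_{d_\bolda}$ listed in decreasing order. Setting $\bolda=(\lambda_1,(\lambda_2,\dots,\lambda_{d_\bolda}))\in\mathcal P'(n)$, the basis-change matrix $p$ sending $g^i_j$ (defined in \eqref{Xboldap}) to $f^i_j$ satisfies $pX_\bolda p^{-1}=X$; moreover $pe_1=pg^1_1=f^1_1=e_1$, so $p\in P$, proving $P\cdot X=P\cdot X_\bolda$.

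\smallskip\noindent\textbf{Uniqueness.} I recover $\bolda$ from two $P$-invariants of $X$. The multiset $\{\lambda_1,\dots,\lambda_{d_\bolda}\}$ equals the Jordan type of $X$, a $\GL_n$-invariant hence a $P$-invariant. The integer $\lambda_1$ itself is given by the quantity
\[
\delta(X):=1+\max\{m\geqslant 0\mid e_1\in\Img X^m\},
\]
which is $P$-invariant: if $p\in P$ then $pe_1\in\K^*e_1$ and $\Img(pXp^{-1})^m=p\cdot\Img X^m$, so $\delta(pXp^{-1})=\delta(X)$. A direct inspection, using that $\Img X_\bolda^m$ is spanned by those $g^i_s$ with $1\leqslant s\leqslant\lambda_i-m$, yields $\delta(X_\bolda)=\lambda_1$. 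Once $\lambda_1$ and the multiset $\{\lambda_1,\dots,\lambda_{d_\bolda}\}$ are known, the decreasing tuple $(\lambda_2,\dots,\lambda_{d_\bolda})$ is determined, hence so is $\bolda$.

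\smallskip\noindent\textbf{Main obstacle.} The only nontrivial input is the refined Jordan-form lemma invoked in the existence step. It is classical, provable by induction on $\dim V$: the maximality in the definition of $\lambda_1$ is precisely what guarantees that the cyclic submodule $\K[X]v$ of dimension $\lambda_1$ admits an $X$-stable complement whose Jordan type is the obvious residual one, so a Jordan basis of such a complement completes $(v,Xv,\dots,e_1)$ into a Jordan basis of $V$ of the desired shape.
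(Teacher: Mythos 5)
Your proof is correct, but it takes a genuinely different route from the paper's. The paper works bottom-up in the decomposition $X=X_2+X_3$ of \eqref{p}: it first normalizes $X_3$ to a Jordan form $X_{\bolda}$ via $\GL(U_1)\subset P$, then computes the residual action of the stabilizer $P'$ of $X_3$ on $X_2\in\Hom(U_1,V_1)$ by identifying $X_2 \cdot (\GL_{n-1}^{X_3})$ modulo $\Img({}^tX_3)$ using Lemma \ref{extpara}, and reads off the invariant $i_0$ from this explicit orbit computation. You instead go top-down: from $Xe_1=0$ you extract the depth $\lambda_1$ of $e_1$ in the image filtration, invoke a refined Jordan-chain lemma to build a Jordan basis of $V$ with $e_1$ as the bottom of a block of size exactly $\lambda_1$, and observe that the resulting base change fixes $e_1$ and hence lies in $P$; uniqueness then drops out because the Jordan type and the depth $\delta(X)$ are both visibly $P$-invariant. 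Your argument is shorter and more conceptual for the classification statement itself. What the paper's computation buys, beyond the classification, is the explicit description of the fiber $\{p_1X_2p_3^{-1}\}+\Img({}^tX_3)$ in \eqref{fiber}, which is recycled in Remark \ref{remfiber} and in the subsequent lemma showing $\MN_{\bolda}(\pp)\subset\overline{\MN_{\bolda^0}(\pp)}$ for $\lambda_1\leqslant\lambda_2+1$; with your approach that orbit-closure information would have to be re-derived separately. One small remark: the refined Jordan lemma you invoke (a nonzero $u\in\Ker X$ of depth $m$ is the bottom of some Jordan block of size $m$) is indeed true and classical — a clean proof writes $u=\sum c_i w_i$ in terms of the bottoms $w_i$ of a fixed Jordan basis, notes that $m=\min\{\mu_i:c_i\neq 0\}$, and replaces the chain over some $w_{i_0}$ with $\mu_{i_0}=m$, $c_{i_0}\neq 0$, by the chain generated by $v'=\sum_{c_i\neq 0}c_iX^{\mu_i-m}t_i$ where $t_i$ is the top of block $i$ — but since the paper does not prove it, you would need to supply this argument or a reference to make the write-up self-contained.
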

\begin{proof}
Let us describe the $P$-action on $\pp^{nil}$.\\
Let $X=\decp{0}{X_2}{X_3}\in \pp^{nil}$ and $p=\decp{p_1}{p_2}{p_3}\in P$ (hence, $p_1\in \K^*$, $p_3\in \GL(U_1)\cong \GL_{n-1}$ and  $p^{-1}=\decp{p_1^{-1}}{-p_1^{-1}p_2p_3^{-1}}{p_3^{-1}}$). Then 
\begin{equation}\label{paction}p\cdot X=pXp^{-1}={ \decp{0}{p_2X_3p_3^{-1}+p_1X_2p_3^{-1}}{p_3 X_3 p_3^{-1}}}.\end{equation}
Hence, in order to classify $P$-orbits of $\pp^{nil}$, we can restrict ourselves to the case where $X_3$ is in Jordan normal form and study $P'\cdot X$ where $P'=\{p\in P\mid p_3\in\GL_{n-1}^{X_3}\}$. More precisely,
we fix $\boldmu\in \mathcal P(n-1)$ and $f^i_j:=e_{(\sum_{\ell=1}^{i-1} \mu_{\ell})+j+1}$ ($1\leqslant i\leqslant d_{\boldmu}, 1\leqslant j\leqslant \mu_i$)  and assume that
$$X_{3}(f^i_j)=\left \{\begin{array}{l l}f^i_{j-1} &\mbox{ if $j>1$,}\\ 0 & \mbox{ if $j=1$.}\end{array}\right.$$
Recall that we identify $\Hom(U_1,V_1)$ with $E=\langle{}^tf^i_j\rangle_{i,j}\cong \K^{n-1}$. The action of $\GL_{n-1}$ on this vector space that we consider is the natural right action. For any $p_3\in \GL_{n-1}^{X_{3}}$, we have $p_2X_3p_3^{-1}=p_2p_3^{-1}X_3$ and $\{p_2p_3^{-1}X_3\mid p_2\in E\}=\Img({}^tX_3)=\langle {}^tf_j^i\mid j\neq1\rangle$ for any $p_3\in \GL_{n-1}^{X_3}$.
On the other hand, set $i_0=\min\{i\mid {X_2(f_1^{i'})\neq0} \mbox{ for some $i'$ such that $\mu_i=\mu_{i'}$} \}$ (If $X_2=0$, set $i_0:=d_{\boldmu}+1$, $\mu_{i_0}=0$ and ${}^tf_1^{i_0}=0$). We have  
\begin{eqnarray}
\left\{p_1X_2p_3^{-1}\left| \begin{array}{c}p_1\in \K^*,\\ p_3\in \GL_{n-1}^{X_3}\end{array}\right.\right\}+\Img({}^tX_3)
&\stackrel{(p_1Id_{n-1}\subset \GL_{n-1}^{X_3})}{=}&\left\{X_2p_3^{-1}\mid p_3\in \GL_{n-1}^{X_3}\right\}+\Img({}^tX_3) 
\quad \notag\\
&\stackrel{\textrm{(Lemma~\ref{extpara})}}{=}&\left\langle{}^tf^i_1\,|\,\mu_i=\mu_{i_0}\right\rangle\setminus\{0\}\notag\\
&&+\left\langle {}^tf_j^i \mid j\neq 1 \mbox{ or } \mu_{i_0}>\mu_i \right\rangle\label{fiber}
\end{eqnarray}
As a consequence, the $P$-orbit of $X$ is uniquely determined by
$\boldmu$ and $i_0$. A representative of $P\cdot X$ is
$Y=\decp{0}{{}^tf_1^{i_0}}{X_3}$. Finally, an elementary base change in $P$
obtained by a re-ordering of the $(f^i_j)_{i,j}$ sends $Y$ on
$X_{\bolda}$ where $\bolda:=(\mu_{i_0}+1,(\mu_2\geqslant \cdots
\widehat{\mu_{i_0}}\cdots\geqslant \mu_{d_{\mu}}))$. 
\end{proof}
\begin{rem}\label{remfiber}
In the special case $X^0:=X_{\bolda^0}$ where
$\bolda^0:=(n,\emptyset)\in \mathcal P'(n)$, we also get $$\overline{P'\cdot X^0}=X^0_3+\Hom(U_1,V_1)$$
as a consequence of \eqref{fiber}, where $P'$ is the subgroup of $P$ defined in the previous proof.
\end{rem}


When $\bolda\in \mathcal P'(n)$, we say that $X_{\bolda}$ is in canonical form in $\pp$. Let 
\begin{equation}\MN_{\bolda}(\pp):=P\cdot(X_{\bolda}, (\pp^{X_{\bolda}})^{nil}).\label{MNboldapp}\end{equation}
Then
\begin{eqnarray}\dim \MN_{\bolda}(\pp) &=&\dim P\cdot X_{\bolda}+\dim \pxlnil\notag \\
&=&\dim \pp-\dim \pp^{X_{\bolda}}+\dim \pxlnil\notag\\
&=&\dim \pp-\codim_{\pp^{X_{\bolda}}}  \pxlnil. \label{codimnil}
\end{eqnarray}

\begin{lm}\label{irrpxbolda}
$$\MN(\pp)=\bigsqcup_{\bolda\in \mathcal P'(n)} \MN_{\bolda}(\pp)$$
Moreover, $(\pp^{X_{\bolda}})^{nil}$ and $\MN_{\bolda}(\pp)$ are irreducible and $\dim \MN_{\bolda}(\pp)=n^2-n+1-d_{\bolda}$.
\end{lm}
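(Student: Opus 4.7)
Here is my plan for proving Lemma \ref{irrpxbolda}.

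The disjoint decomposition is almost immediate from the Classification Lemma \ref{classp1n}. Given $(X,Y)\in\MN(\pp)$, the nilpotent element $X\in\pp^{nil}$ is $P$-conjugate to a unique $X_{\bolda}$: pick $p\in P$ with $pXp^{-1}=X_{\bolda}$. Since $P$ normalizes $\pp$, the element $Y':=p^{-1}Yp$ still lies in $\pp$, is nilpotent (conjugation preserves nilpotency), and commutes with $X_{\bolda}$, so $Y'\in(\pp^{X_{\bolda}})^{nil}$. This shows $(X,Y)\in\MN_{\bolda}(\pp)$. Disjointness is the uniqueness clause of Lemma \ref{classp1n}.

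For the irreducibility of $(\pp^{X_{\bolda}})^{nil}$ and the dimension count, the plan is to invoke the machinery of Section \ref{tech}. Since $V_1=\K g_1^1$, Lemma \ref{TA} tells us that $\pp^{X_{\bolda}}$ is cut out from $M_n^{X_{\bolda}}$ by the linear equations
\[Y^{i,1}_{1,1}=0 \text{ for all } i\neq 1 \text{ with } \lambda_i\geqslant \lambda_1,\]
which express $Y(g_1^1)\in\K g_1^1$. The key structural observation is that these equations respect the block decomposition: the entries with $\lambda_i=\lambda_1$ live in the diagonal block $\pp^{X_{\bolda}}(\lambda_1)\subset(M_n^{X_{\bolda}})_{red}$, while the entries with $\lambda_i>\lambda_1$ live entirely in $\ker(pr_{red})$. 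Consequently
\[\pp^{X_{\bolda}}_{red}=\prod_{\ell}\pp^{X_{\bolda}}(\ell),\]
and the factors are identified as $\pp^{X_{\bolda}}(\ell)\cong M_{\tau_\ell}$ for $\ell\neq\lambda_1$ and $\pp^{X_{\bolda}}(\lambda_1)\cong\pp_{1,\tau_{\lambda_1}}$ (the parabolic stabilizing the line spanned by the class of $g_1^1$ in $W'_{\lambda_1}$). Surjectivity of each $pr_\ell$ restricted to $\pp^{X_{\bolda}}$ is obtained by lifting through the section $\phi$ of Section \ref{tech}.

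Applying Lemma \ref{wfrnil}(ii) to the resulting product decomposition yields both the irreducibility of $(\pp^{X_{\bolda}})^{nil}$ and the codimension formula $\codim_{\pp^{X_{\bolda}}}(\pp^{X_{\bolda}})^{nil}=d_{\bolda}$. Combined with \eqref{codimnil} and the equality $\dim\pp=n^2-n+1$, this gives $\dim\MN_{\bolda}(\pp)=n^2-n+1-d_{\bolda}$. Irreducibility of $\MN_{\bolda}(\pp)$ itself is then automatic: by definition \eqref{MNboldapp} it is the image of the irreducible variety $P\times(\pp^{X_{\bolda}})^{nil}$ under the action morphism $(p,Z)\mapsto(pX_{\bolda}p^{-1},pZp^{-1})$.

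The main point requiring care is the decoupling across blocks needed to write $\pp^{X_{\bolda}}_{red}$ as a direct product of $\pp^{X_{\bolda}}(\ell)$'s. This is the place where one must track, entry by entry via Lemma \ref{TA}, that the defining equations of $\pp$ interact only with the $\lambda_1$-block (inside $(M_n^{X_{\bolda}})_{red}$) together with some off-diagonal entries of $Y_{ext}$ that are invisible to $pr_{red}$; all other conditions from Section \ref{tech} then apply verbatim.
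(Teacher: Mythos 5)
Your proof is correct and takes essentially the same route as the paper: disjointness from the classification Lemma~\ref{classp1n}, then identification of $(\pp^{X_{\bolda}})_{red}$ as a product $\prod_\ell\pp^{X_{\bolda}}(\ell)$ in which only the $\lambda_1$-block is a proper parabolic $\pp_{1,\tau_{\lambda_1}}$, followed by Lemma~\ref{wfrnil}(ii) and \eqref{codimnil}. The one thing you elide is the explicit re-sorting of the basis that the paper performs so that $X_{\bolda}$ (defined via $\bolda\in\mathcal P'(n)$ in \eqref{Xboldap}) becomes the Jordan normal form $X_{\boldmu}$ for a genuine partition $\boldmu\in\mathcal P(n)$, which is what licenses citing Lemma~\ref{TA} and the constructions $pr_{\ell}$, $pr_{red}$ from Section~\ref{tech}; spelling out that permutation (as the paper does with $i_0=\max(\{i\mid\lambda_i>\lambda_1\}\cup\{1\})$) would tighten the step where you apply Lemma~\ref{TA} to the $g^i_j$ basis.
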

\begin{proof}
The decomposition into a disjoint union follows from Lemma \ref{classp1n}.

Let $\bolda\in \Pn'(n)$ and use notation of \eqref{Xboldap}.
In order to apply results of section \ref{tech}, we have to define a new basis $(f^i_j)$ in which $X:=X_{\bolda}$ is in canonical form for $M_n$ as in \eqref{Xbolda}. Set $i_0:=\max (\{i|\lambda_i>\lambda_1\}\cup\{1\})$ and 
$$f^i_j:=\left\{\begin{array}{c c}g^{1}_j & \mbox{ if } i=i_0 \\ g^{i+1}_j & \mbox{ if } i<i_0\\g^{i}_j & \mbox{ if } i>i_0 \end{array}\right.,\qquad \mu_i:=\left\{\begin{array}{c c}\lambda_1 & \mbox{ if } i=i_0 \\ \lambda_{i+1} & \mbox{ if } i<i_0\\\lambda_i & \mbox{ if } i>i_0 \end{array}\right..$$
In this basis, $X$ becomes $X_{\boldmu}$ with $\boldmu=(\mu_1\geqslant \dots\geqslant \mu_{d_{\bolda}})\in \mathcal P(n)$ and $\pp$ is defined in $M_n$ by the single property $Y\in \pp\Leftrightarrow Y(f_1^{i_0})\subset \K f_1^{i_0}$.
Hence, the subspace $(\pp^{X})\red$  (cf. Definition \ref{defiwfrred}) is also characterized in $(M_{n}^{X})\red$ by the single property
$$Y\red\in (\pp^{X})\red\Leftrightarrow Y\red(f_1^{i_0})\subset \K f_1^{i_0}.$$ 
In particular, letting $\tau_{\ell}:=\sharp\{i\,|\,\lambda_i=\ell\}=\sharp\{i\,|\,\mu_i=\ell\}$, we have $$\pp^{X}(\ell)\cong\alter{M_{\tau_{\ell}}}{if $\ell \neq \lambda_1$}
{\pp_{1,\tau_{\ell}}}{if $\ell=\lambda_1$}, \mbox{ and } (\pp^{X})\red=\prod_{\ell}\pp^{X}(\ell).$$
Then, Lemma \ref{wfrnil} (ii) provides the irreducibility statement for $(\pp^{X})^{nil}$ and hence for $\MN_{\bolda}(\pp)$. Together with \eqref{codimnil}, it also provides the dimension of $\MN_{\bolda}(\pp)$. 
%
\end{proof}

Combining this with corollary \ref{thmZobis}, we get that the irreducible components of $\MN(\pp)$ are of the form $\overline{\MN_{\bolda}(\pp)}$ where $\bolda\in \mathcal P'(n)$ has at most two parts ($d_{\bolda}\leqslant 2$). The unique irreducible component of maximal dimension is associated with $\bolda^0=(n,\emptyset)\in \mathcal P'(n)$.


There remains to show that \begin{equation}\label{MNbolda}\MN_{\bolda}(\pp)\subset \overline{\MN_{\bolda^0}(\pp)}\end{equation} when $\bolda$ has two parts. 
In order to prove this, we distinguish two cases.


\begin{lm}
If $\bolda=(\lambda_1,(\lambda_2))\in \Pn'(n)$ with $\lambda_1\leqslant\lambda_2+1$, property \eqref{MNbolda} is satisfied.
\end{lm}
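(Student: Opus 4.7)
My strategy is to construct, for each $Y\in(\pp^{X_{\bolda}})^{nil}$, a one-parameter family $(X(t),Y(t))_{t\in\AA^1}$ in $\MN(\pp)$ with $(X(0),Y(0))=(X_{\bolda},Y)$ and $X(t)$ regular nilpotent (hence in $P\cdot X^0$) for $t\neq 0$. This places $(X_{\bolda},Y)$ in $\overline{\MN_{\bolda^0}(\pp)}$, and by closedness, $P$-invariance, and the irreducibility of $(\pp^{X_{\bolda}})^{nil}$ (Lemma~\ref{irrpxbolda}), it suffices to do this for $Y$ in a dense subset; then $\MN_{\bolda}(\pp)=P\cdot(X_{\bolda},(\pp^{X_{\bolda}})^{nil})\subset\overline{\MN_{\bolda^0}(\pp)}$, which is precisely \eqref{MNbolda}.

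I would take $X(t):=X_{\bolda}+tE$ with $E:=E_{g_{\lambda_1}^1,g_1^2}\in\pp$, the matrix unit that sends $g_1^2\mapsto g_{\lambda_1}^1$ and kills every other basis vector. Direct inspection shows $X(t)\in\pp^{nil}$, and for $t\neq 0$ one exhibits the single Jordan chain $g_{\lambda_2}^2\mapsto\cdots\mapsto g_1^2\mapsto tg_{\lambda_1}^1\mapsto\cdots\mapsto tg_1^1\mapsto 0$ of length $n$, so $X(t)$ is regular. Since $\ker X(t)=V_1$, the diagonal rescaling $g_j^2\mapsto tg_j^2$ lies in $P$ and conjugates $X(t)$ to $X^0$, confirming $X(t)\in P\cdot X^0$ for every nonzero $t$.

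For the companion $Y(t)$, I would seek a finite expansion $Y(t)=Y+tY_1+t^2Y_2+\cdots$ with $Y_i\in\pp^{nil}$. Expanding $[X(t),Y(t)]=0$ order by order gives the recursive cohomological system
\[[X_{\bolda},Y_{k+1}]=-[E,Y_k],\qquad k\geq 0,\ Y_0=Y.\]
The key technical step is to show that under the hypothesis $\lambda_1\leq\lambda_2+1$, each of these equations admits a nilpotent solution $Y_{k+1}\in\pp$. I would prove this through an explicit index analysis based on the Toeplitz block description of $\pp^{X_{\bolda}}$ furnished by Lemma~\ref{TA}: the only obstructions to solving $[X_{\bolda},Z]=W$ in $\pp$ are concentrated at entries linked to the bottom row of the short chain $g^1$, and the inequality $\lambda_1\leq\lambda_2+1$ forces precisely these obstructions to vanish. (When $\lambda_1>\lambda_2+1$, one already sees a concrete $(n,n)$-entry obstruction, consistent with the fact that the opposite case yields a separate irreducible component, handled in the following lemma.) Termination of the recursion in finitely many steps is guaranteed by the finite-dimensionality of $\pp$ together with the nilpotency of $\mathrm{ad}(E)$ on a natural filtration by powers of $X_{\bolda}$.

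The hard part is the cohomological solvability claim in the previous paragraph: producing a nilpotent preimage $Y_{k+1}$ at every step and verifying that the process stops. Once this is in hand, $(X(t),Y(t))\in\MN_{\bolda^0}(\pp)$ for $t\neq 0$ with limit $(X_{\bolda},Y)$ as $t\to 0$ delivers the conclusion immediately.
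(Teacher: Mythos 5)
Your strategy is genuinely different from the paper's: the paper fixes the block pair $(X_3,Y_3)\in\MN(\gl(U_1))$, identifies the fiber $F_{X_3,Y_3}$ of $\MN(\pp)$ over it as a vector space, and compares its dimension with that of the fiber $F'_{X_3,Y_3}$ of $\overline{\MN_{\bolda^0}(\pp)}$, using semi-continuity and the estimate $\dim F'_{X_3,Y_3}\geqslant n$. That argument never has to exhibit an explicit curve of regular pairs degenerating to $(X_{\bolda},Y)$.

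Unfortunately your deformation scheme has a genuine gap, and the ``key technical step'' you flag as hard is in fact false. Take the smallest nontrivial instance, $n=4$, $\bolda=(2,(2))$, so that $X_{\bolda}=E_{12}+E_{34}$ and your $E=E_{g^1_2,g^2_1}=E_{23}$. A general element of $(\pp^{X_{\bolda}})^{nil}$ is
\[
Y=\begin{pmatrix}0&b_{11}&a_{12}&b_{12}\\0&0&0&a_{12}\\0&b_{21}&0&b_{22}\\0&0&0&0\end{pmatrix},
\]
a five-parameter family. Computing $-[E,Y]$ and comparing with $[X_{\bolda},Z]$ entry by entry, one finds that the equation $[X_{\bolda},Z]=-[E,Y]$ forces simultaneously $Z_{43}=b_{21}$ (from the $(3,3)$ entry) and $Z_{43}=0$ (from the $(4,4)$ entry), hence $b_{21}=0$; and it forces $Z_{23}=b_{11}$ (from $(1,3)$) and $Z_{23}=b_{22}$ (from $(2,4)$), hence $b_{11}=b_{22}$. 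So the first cohomological equation is unsolvable, even in $M_n$, for $Y$ outside the codimension-two locus $\{b_{21}=0,\ b_{11}=b_{22}\}$. Your approach therefore cannot reach a dense subset of $(\pp^{X_{\bolda}})^{nil}$, which is what the closedness argument requires.

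This failure is structural, not an artifact of the choice of $E$. For $t\neq 0$ the matrix $X(t)=X_{\bolda}+tE$ is regular, so $\pp^{X(t)}=\K[X(t)]$ has dimension exactly $n$, whereas $(\pp^{X_{\bolda}})^{nil}$ has dimension $\dim\pp^{X_{\bolda}}-d_{\bolda}$, which already equals $5>4$ when $n=4$ and grows past $n$ as $\min(\lambda_1,\lambda_2)$ does. Any $Y(t)$ regular in $t$ with $Y(t)\in\pp^{X(t)}$ for $t\neq 0$ has limit $Y(0)$ lying in the flat limit $\lim_{t\to 0}\pp^{X(t)}$, an $n$-dimensional subspace of $\pp^{X_{\bolda}}$; so for a fixed one-parameter curve of regular nilpotents the attainable limits $Y(0)$ sweep out at most an $n$-dimensional family. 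To cover a dense subset of $(\pp^{X_{\bolda}})^{nil}$ one must let $X$ vary in a higher-dimensional family of regular nilpotents, which is precisely what the paper's fiberwise dimension comparison achieves implicitly. As a secondary issue, even when the recursion is solvable, requiring each coefficient $Y_i$ to be nilpotent is neither necessary nor sufficient for $Y(t)$ to be nilpotent; what one actually needs is that $Y(t)\in\K[X(t)]$ has no constant term for $t\neq 0$.
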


\begin{proof}
For $(X_3,Y_3)\in \MN(\gl(U_1))$, we look at the fiber over $(X_3,Y_3)$ in $\MN(\pp)$ and $\overline{\MN_{\bolda^0}(\pp)}$: $$F_{X_3,Y_3}:=\{(X_2,Y_2)\in (\Hom(U_1, V_1))^2\mid (X_2+X_3,Y_2+Y_3)\in \MN(\pp)\},$$
$$F'_{X_3,Y_3}:=\{(X_2,Y_2)\in (\Hom(U_1, V_1))^2\mid (X_2+X_3,Y_2+Y_3)\in \overline{\MN_{\bolda^0}(\pp)}\}.$$

Since $F_{X_3,Y_3}=\{(X_2,Y_2)\mid {}^tX_3{}^tY_2={}^tY_3{}^tX_2\}$ (cf. \eqref{crochXY}) is a vector space, it is irreducible.
On the other hand, the two varieties $F_{X_3,Y_3}$ and $F'_{X_3,Y_3}$ are closed and satisfy $F'_{X_3,Y_3}\subset F_{X_3,Y_3}$. So 
\begin{equation}F_{X_3,Y_3}=F'_{X_3,Y_3}\Leftrightarrow \dim F_{X_3,Y_3}=\dim F'_{X_3,Y_3}.\label{eqdim}\end{equation}
We can compute the dimension of $F_{X_3,Y_3}$ in the following way: 
\begin{eqnarray*}
\dim F_{X_3,Y_3}&=&\dim (\Img ({}^tX_3)\cap \Img({}^tY_3))+\dim \Ker ({}^tX_3)+\dim \Ker({}^t Y_3)\\
&=&\dim \Img({}^tX_3)+\dim \Img({}^tY_3)-\dim (\Img({}^tX_3)+\Img({}^tY_3))\\&&+\dim \Ker ({}^tX_3)+\dim \Ker({}^t Y_3)\\
&=&2(n-1)-\dim (\Img({}^tX_3)+\Img({}^tY_3)).\end{eqnarray*}

Set $X^0:=X_{\bolda^0}$. Then, identifying $\Hom(U_1,V_1)$ with $\langle{}^te_2, \dots, {}^t e_n\rangle$ and using notation of \eqref{p}, we have $\Img({}^tX^0_3)=\langle {}^te_3,\dots {}^te_n\rangle$ and for any $Y_3\in (\gl(U_1)^{X^{0}_3})^{nil}$, the inclusion $\Img({}^tY_3)\subset \Img({}^tX^0_3)$ holds. Since  $\dim \Img({}^tX^0_3)=n-2$, we get $\dim F_{X^0_3,Y_3}=n$. An other consequence of the inclusion $\Img({}^tY_3)\subset \Img({}^tX^0_3)$ is the following: for any $X_2\in \Hom (U_1,V_1)$, there exists $Y_2\in\Hom (U_1,V_1)$ such that $(X_2,Y_2)\in F_{X_3^0,Y_3}$. Combining this with Remark \ref{remfiber}, we get that $X_3^0+X_2\in P.X_3^0$ for a general element $(X_2,Y_2)\in F_{X_3^0,Y_3}$ and 
\begin{equation*}\label{descrNl0}\overline{\MN_{\lambda^0}(\pp)}
=\overline{\GL(U_1)\cdot \left\{(X^0_3+ X_2, Y_3+Y_2)\left| \begin{array}{ c } Y_3\in(\gl(U_1)^{X_3^0})^{nil}\\(X_2,Y_2)\in F_{X_3^0,Y_3} \end{array}\right.\right\}.}\end{equation*}
In particular, a general element $(X,Y)$ of the irreducible variety $\overline{\MN_{\bolda^0}(\pp)}$ satisfies $\dim F_{X_3,Y_3}'=n$. 
Moreover, since $\MN(\gl(U_1))=\overline{\GL(U_1).(X_3^0,(\gl(U_1)^{X_3^0})^{nil})}$ (Theorem \ref{irrcomvar}), we see that any $(X_3,Y_3)\in \MN(\gl(U_1))$ lies in fact in $\overline{\MN_{\bolda^0}(\pp)}$ by considering the inclusion $\MN(\gl(U_1))\subset\MN(\pp)$ given by $X_2=Y_2=0$. Hence $F'_{X_3,Y_3}\neq\emptyset$ and 
\begin{equation}\forall (X_3,Y_3)\in \MN(\gl(U_1)), \quad\dim F'_{X_3,Y_3}\geqslant n.\label{leastfiber}\end{equation}

From now on, we fix $X:=X_{\bolda}$ and want to show that a general element $Y$ of $(\pp^X)^{nil}$ satisfies $(X,Y)\in \overline{\MN_{\bolda^0}(\pp)}$. This will prove the Lemma since $(\pp^X)^{nil}$ is irreducible (Lemma \ref{irrpxbolda}) and we will then have $(X,(\pp^X)^{nil})\subset \overline{\MN_{\bolda^0}(\pp)}$. Define $Z\in \pp$ by
$$Z(g^i_j)=\left\{\begin{array}{l l}g^2_{j-1}& \mbox{ if } i=1, \; j>1,\\ 
0 & \mbox { else.}  \end{array} \right.$$
We have $Z\in(\pp^X)^{nil}$  under the hypothesis made on $\bolda$ (Lemma \ref{TA}) and $\Img({}^tZ_3)+\Img({}^tX_3)=\langle g^1_2,\dots, g^1_{\lambda_1}, g^2_{2},\dots, g^2_{\lambda_2}\rangle$ so $\dim F_{X_3,Z_3}=n$. Since the application $\func{(\pp^X)^{nil}}{\N}{Y}{\dim F_{X_3,Y_3}}$ is upper semi-continuous, it follows from \eqref{leastfiber} that $W:=\left\{Y\in (\pp^{X})^{nil}\mid \dim F_{X_3,Y_3}=n=\dim F'_{X_3,Y_3}\right\}$ is a non-empty open subset of $(\pp^{X})^{nil}$. For $Y\in W$, we have $(X,Y)\in(X_3,Y_3)+F_{X_3,Y_3}\subset\overline{\MN_{\bolda^0}(\pp)}$ by \eqref{eqdim}.
%
\end{proof}


The following Lemma can be proved with purely matricial arguments. However, we find the given proof more interesting. It uses the isomorphism $\pp_{1,n}\cong\pp_{n-1,n}$ and enlighten a bit the correspondence between $\sxno{1}$ and $\sxno{n-1}$ mentioned in remark \ref{remSurLaSymetrie}.

\begin{lm}\label{remonte1nmoins1}
If $\bolda=(\lambda_1,(\lambda_2))\in \Pn'(n)$ with $\lambda_1\geqslant \lambda_2$, then Property \eqref{MNbolda} is satisfied.
\end{lm}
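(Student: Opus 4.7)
The plan is to exploit the transposition isomorphism $\tau: \pp_{1,n} \stackrel{\sim}{\to} \pp_{n-1,n}$, $X \mapsto -{}^tX$, which induces an isomorphism $\MN(\pp_{1,n}) \cong \MN(\pp_{n-1,n})$, in order to import irreducibility information from $\sxno{1}$ (which is easy) back to $\MN(\pp_{1,n})$. First, observe that $\sxno{1}$ parametrizes pairs $(z_1,z_n)$ supported at the origin with $z_1 \subset z_n$; since the length-one punctual scheme is unique, $\sxno{1} \cong \sxo{n}$, which is irreducible of dimension $n-1$ by Brian\c con's theorem. Proposition~\ref{corresirred} then yields that $\MN^{cyc}(\pp_{n-1,n})$ is irreducible of dimension $(n-1)+(\dim\pp_{n-1,n}-n) = n^2-n$, and it is open in $\MN(\pp_{n-1,n})$.

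Setting $U := \tau^{-1}(\MN^{cyc}(\pp_{n-1,n})) \subset \MN(\pp)$, I obtain an irreducible open subvariety of dimension $n^2-n$. Hence $\overline{U}$ is an irreducible component of $\MN(\pp)$ of maximal dimension, and by Lemma~\ref{irrpxbolda} combined with Corollary~\ref{thmZobis} the unique irreducible component of $\MN(\pp)$ of dimension $n^2-n$ is $\overline{\MN_{\bolda^0}(\pp)}$; consequently $\overline{U} = \overline{\MN_{\bolda^0}(\pp)}$. Since $\MN_{\bolda}(\pp)$ is irreducible (Lemma~\ref{irrpxbolda}) and $U$ is open in $\MN(\pp)$, any nonempty intersection $\MN_{\bolda}(\pp) \cap U$ forces the generic point of $\overline{\MN_{\bolda}(\pp)}$ to lie in $U$, and hence $\overline{\MN_{\bolda}(\pp)} \subset \overline{U} = \overline{\MN_{\bolda^0}(\pp)}$, which is exactly property \eqref{MNbolda}. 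The problem thus reduces to exhibiting a single pair $(X_{\bolda}, Y) \in \MN_{\bolda}(\pp)$ such that $(-{}^tX_{\bolda}, -{}^tY) \in \MN^{cyc}(\pp_{n-1,n})$.

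For this last step, I would parametrize $(\pp^{X_{\bolda}})^{nil}$ via Lemma~\ref{TA} and choose $Y$ so as to link the two Jordan blocks of $X_{\bolda}$: the natural candidate is $Y(g^1_j) := g^2_{j-(\lambda_1-\lambda_2)}$ for $j > \lambda_1-\lambda_2$, extended by zero on the remaining basis vectors, when $\lambda_1 > \lambda_2$; when $\lambda_1 = \lambda_2$, one needs an analogous ``cross-block'' map adjusted so that $Y(V_1) \subset V_1$ still holds (e.g.\ swapping the roles of the two blocks and sending $Y(g^2_j) := g^1_j$). In either case one checks directly that $Y$ commutes with $X_{\bolda}$, lies in $\pp$, and is nilpotent. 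The main technical obstacle is the cyclicity verification: pick a candidate $v \in V$ whose image in $V$ has nonzero component outside the hyperplane defining $\MN^{cyc}(\pp_{n-1,n})$ (e.g.\ a sum of the ``bottom'' vector of each Jordan block, so that $-{}^tX_{\bolda}$ can push it upward inside each block and $-{}^tY$ can swap it between the two blocks), and verify by direct combinatorial bookkeeping that the span of $\{(-{}^tX_{\bolda})^a (-{}^tY)^b v\}_{a,b}$ is all of $V$. The hypothesis $\lambda_1 \geqslant \lambda_2$ is what guarantees the existence of such a $Y$ preserving $V_1$ and such a cyclic vector; this is where the case hypothesis enters.
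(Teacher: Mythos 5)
Your overall strategy is the same as the paper's: transport irreducibility from $\sxno{1}$ to $\MN^{cyc}(\pp_{n-1,n})$ via Proposition~\ref{corresirred}, identify the closure of this cyclic locus with that of $\MN_{\bolda^0}$, and reduce to exhibiting one point of $\MN_{\bolda}(\pp_{1,n})$ whose image under transposition lies in the cyclic locus. (One small technical point: the map $X\mapsto-{}^tX$ lands in the stabilizer of $\langle e_2,\dots,e_n\rangle$, not of $V_{n-1}$, so one should conjugate by a permutation as the paper does with $\psi'$; this is harmless.) The reduction steps are fine.

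The gap is in the concrete choice of $Y$ for $\lambda_1>\lambda_2$. With your ``natural candidate'' $Y(g^1_j)=g^2_{j-(\lambda_1-\lambda_2)}$ (for $j>\lambda_1-\lambda_2$, zero elsewhere), the transpose acts by ${}^tY(g^2_i)=g^1_{i+\lambda_1-\lambda_2}$ and ${}^tY(g^1_j)=0$. Since $\lambda_1-\lambda_2\geqslant 1$, one gets
\[\Img({}^tY)=\langle g^1_j : j>\lambda_1-\lambda_2\rangle\subset\langle g^1_j : j\geqslant 2\rangle\subset\Img({}^tX_{\bolda}),\]
so $\dim V/\bigl(\Img({}^tX_{\bolda})+\Img({}^tY)\bigr)=2$. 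But if a commuting nilpotent pair $(A,B)$ has a cyclic vector $v$, then $\K[x,y]\twoheadrightarrow V$, $P\mapsto P(A,B)v$, induces $V/(\Img A+\Img B)\cong\K[x,y]/(x,y)\cong\K$, which is one-dimensional. So for this $Y$ no vector $v$ whatsoever can be cyclic, and the ``direct combinatorial bookkeeping'' you defer is doomed to fail. The correct choice is precisely the one you relegate to the sub-case $\lambda_1=\lambda_2$: set $Y(g^2_j):=g^1_j$ and $Y(g^1_j):=0$. This is the paper's $Y$, and it works uniformly for all $\lambda_1\geqslant\lambda_2$: that inequality guarantees $g^1_j$ exists for $j\leqslant\lambda_2$ and, by Lemma~\ref{TA}, that $Y$ centralizes $X_{\bolda}$; moreover $Y(g^1_1)=0\in V_1$ so $Y\in\pp_{1,n}$, and $g^1_1$ is cyclic for $({}^tX_{\bolda},{}^tY)$ since $({}^tX_{\bolda})^a(g^1_1)=g^1_{a+1}$ and $({}^tX_{\bolda})^a\,{}^tY(g^1_1)=g^2_{a+1}$ together exhaust the basis.
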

\begin{proof}
Seen as varieties, we have $\sxno{1}\stackrel{var}{\cong}\sno$ (Proposition \ref{s1no_var}).
In particular, the irreducibility of $\sxno{1}$ follows from that of $\sno$ \cite{Br,Pr} and $\MN^{cyc}(\pp_{n-1,n})$ is then also irreducible (Proposition \ref{corresirred}). 


We have a Lie algebra isomorphism given
by \begin{equation}\label{transiso}\psi':
  \func{\pp_{1,n}}{\pp_{n-1,n}}{X}{-s({}^tX)s^{-1}}\end{equation}
where $s$ is defined on the basis $(e_i)_{i\in [\![1,n]\!]}$ by
$s(e_i):=e_{n-i}$. In particular, the restriction
$\psi:\MN(\pp_{1,n})\rightarrow\MN(\pp_{n-1,n})$ is an isomorphism of
varieties. Moreover, $\psi(X,Y)$ has a cyclic vector if and only if
$({}^tX, {}^tY)$ does.

Note that
$\psi(\MN_{\bolda^0}(\pp_{1,n}))=\MN_{\bolda^0}(\pp_{n-1,n})$ and
that $\MN_{\bolda^0}(\pp_{1,n})$ is open in $\MN(\pp_{1,n})$.
It is then straightforward to check that $\psi(\MN_{\bolda^0}(\pp_{1,n}))\subset \MN^{cyc}(\pp_{n-1,n})$. Hence, it follows from Lemma \ref{irrpxbolda} and the irreducibility of the open subvariety $\MN^{cyc}(\pp_{n-1,n})\subset \MN(\pp_{n-1,n})$ that $\overline{\psi(\MN_{\bolda^0}(\pp_{1,n}))}=\overline{\MN^{cyc}(\pp_{n-1,n})}$.

Consider now $X_{\bolda}$ given by \eqref{Xboldap}. We can define $Y\in (\pp_{1,n})^{nil}$ via
\[Y(g^i_j):=\alter{g^{1}_j}{\textrm{if } i=2,}{0}{\textrm{if }i=1.}\]
Under our hypothesis on $\bolda$, we have $Y\in\pp_{1,n}^{X_{\bolda}}$ (Lemma \ref{TA}) and ${}^tg_1^1$ is a cyclic vector for $({}^tX_{\bolda}, {}^tY)$. In particular, $\psi(\MN_{\bolda}(\pp_{1,n}))\cap \MN^{cyc}(\pp_{n-1,n})\neq \emptyset$ so the irreducible subset $\psi(\MN_{\bolda}(\pp_{1,n}))$ is contained in $\overline{\MN^{cyc}(\pp_{n-1,n})}=\overline{\psi(\MN_{\bolda^0}(\pp_{1,n}))}$.
Since $\psi$ is an isomorphism, \eqref{MNbolda} is proved in our case.
\end{proof}
Finally, it follows from discussion above \eqref{MNbolda} that the following theorem holds.
\begin{thm} \label{irr1n}
The variety $\MN(\pp_{1,n})$ is irreducible of
dimension $ n^2-n=\dim \pp_{1,n}-1.$
\end{thm}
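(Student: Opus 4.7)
The plan is to assemble the ingredients already proved in this section. By Lemma \ref{irrpxbolda}, we have the stratification
\[
\MN(\pp) \;=\; \bigsqcup_{\bolda \in \mathcal{P}'(n)} \MN_\bolda(\pp),
\]
where each $\MN_\bolda(\pp)$ is irreducible of dimension $n^2 - n + 1 - d_\bolda$. Since Corollary \ref{thmZobis} guarantees that every irreducible component of $\MN(\pp)$ has dimension at least $n^2 - n - 1$, the closure of $\MN_\bolda(\pp)$ can be an irreducible component only when $d_\bolda \leqslant 2$. Moreover, the maximal dimension $n^2 - n$ is attained uniquely by $\bolda^0 = (n,\emptyset)$, which has $d_{\bolda^0} = 1$.

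First I would single out $\overline{\MN_{\bolda^0}(\pp)}$ as an irreducible closed subvariety of $\MN(\pp)$ of dimension exactly $n^2 - n$. To conclude that $\MN(\pp) = \overline{\MN_{\bolda^0}(\pp)}$, it then suffices to check, for every $\bolda \in \mathcal{P}'(n)$ with $d_\bolda = 2$, the inclusion
\[
\MN_\bolda(\pp) \subset \overline{\MN_{\bolda^0}(\pp)}. \tag{$\star$}
\]
Any such $\bolda$ has the form $(\lambda_1, (\lambda_2))$, and exactly one of the conditions $\lambda_1 \leqslant \lambda_2 + 1$ or $\lambda_1 \geqslant \lambda_2$ holds (in fact, if $\lambda_1 = \lambda_2$ or $\lambda_1 = \lambda_2 + 1$, both hold; otherwise exactly one does). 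Consequently, the two lemmas immediately preceding the theorem cover every case of $(\star)$: the first handles $\lambda_1 \leqslant \lambda_2 + 1$, and the second handles $\lambda_1 \geqslant \lambda_2$.

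Combining these facts, every stratum $\MN_\bolda(\pp)$ with $d_\bolda \geqslant 2$ lies in $\overline{\MN_{\bolda^0}(\pp)}$, and the unique stratum with $d_\bolda = 1$ is $\MN_{\bolda^0}(\pp)$ itself. Therefore
\[
\MN(\pp) \;=\; \overline{\MN_{\bolda^0}(\pp)},
\]
which is irreducible of dimension $n^2 - n$. Finally, $\dim \pp_{1,n} = 1 + (n-1) + (n-1)^2 = n^2 - n + 1$, so the dimension formula $\dim \MN(\pp_{1,n}) = \dim \pp_{1,n} - 1$ holds. The main (and only) non-routine part of the argument was the verification of $(\star)$, which was already carried out in the two preceding lemmas; the role of the theorem itself is to package the stratification bound from Lemma \ref{irrpxbolda} together with Corollary \ref{thmZobis} and those absorption statements.
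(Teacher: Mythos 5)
Your proposal is correct and follows essentially the same route as the paper: the theorem is indeed deduced by combining the stratification and dimension formula of Lemma~\ref{irrpxbolda}, the lower bound of Corollary~\ref{thmZobis} to eliminate strata with $d_{\bolda}\geqslant 3$, and the two preceding absorption lemmas (covering $\lambda_1\leqslant\lambda_2+1$ and $\lambda_1\geqslant\lambda_2$) to establish \eqref{MNbolda} for $d_{\bolda}=2$. One small slip in wording: you write ``exactly one of the conditions holds'' and then immediately note that both may hold; what you mean, and what matters, is that \emph{at least one} of the two inequalities holds for every pair $(\lambda_1,\lambda_2)$, which is true since the negations $\lambda_1\geqslant\lambda_2+2$ and $\lambda_1<\lambda_2$ are incompatible.
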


Hence, by Proposition \ref{corresirred}:
\begin{coro} \label{coroIrredHilb1n}
$\sxno{n-1}$ is an irreducible variety of dimension $n-1$.
\end{coro}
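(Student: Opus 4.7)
The plan is to deduce this corollary directly from Theorem \ref{irr1n} via the quotient correspondence of Proposition \ref{corresirred}, which asserts a bijection between irreducible components of $\sxno{n-k}$ of dimension $m$ and those of $\MN^{cyc}(\pp_{k,n})$ of dimension $m + (\dim \pp_{k,n} - n)$. Specializing to $k=1$, it suffices to show that $\MN^{cyc}(\pp_{1,n})$ is irreducible of dimension $\dim \pp_{1,n} - 1$.

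First I would observe that $\MN^{cyc}(\pp_{1,n})$ is, by definition, an open subvariety of $\MN(\pp_{1,n})$ (it is the locus where the pair $(X,Y)$ admits a cyclic vector, which is an open condition). By Theorem \ref{irr1n}, $\MN(\pp_{1,n})$ is irreducible of dimension $\dim \pp_{1,n} - 1 = n^2 - n$, so the only remaining point for irreducibility is non-emptiness of this open subset.

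For non-emptiness, I would exhibit an explicit point. Take $X^0 := X_{\bolda^0}$ for $\bolda^0 = (n, \emptyset) \in \mathcal{P}'(n)$, i.e.\ the regular nilpotent Jordan block defined by $X^0(e_j) = e_{j-1}$ for $j>1$ and $X^0(e_1) = 0$. Then $X^0$ preserves $V_1 = \K e_1$ (trivially, since $X^0(e_1)=0$), so $X^0 \in \pp_{1,n}^{nil}$. The pair $(X^0, 0) \in \MN(\pp_{1,n})$ admits $e_n$ as a cyclic vector for $X^0$ alone, hence also for the pair $(X^0,0)$, so $(X^0,0) \in \MN^{cyc}(\pp_{1,n})$. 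Therefore $\MN^{cyc}(\pp_{1,n})$ is a non-empty open subset of the irreducible variety $\MN(\pp_{1,n})$, and as such it is itself irreducible of the same dimension $n^2 - n$.

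Finally I would apply Proposition \ref{corresirred}: the unique irreducible component of $\MN^{cyc}(\pp_{1,n})$, of dimension $n^2 - n$, corresponds to a unique irreducible component of $\sxno{n-1}$ of dimension
\[
m = (n^2 - n) - (\dim \pp_{1,n} - n) = (n^2 - n) - (n^2 - n + 1 - 1)\cdot\text{(correction)} = n-1,
\]
where we use $\dim \pp_{1,n} = n^2 - n + 1$, so $\dim \pp_{1,n} - n = n^2 - 2n + 1$ and $m = (n^2 - n) - (n^2 - 2n + 1) + \cdots$; the arithmetic is routine and yields $m = n-1$. Thus $\sxno{n-1}$ is irreducible of dimension $n-1$. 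There is no real obstacle here: the substantive work has already been done in Theorem \ref{irr1n} and in the quotient construction of Section \ref{corres}; this corollary is purely a bookkeeping consequence via the geometric-quotient dictionary.
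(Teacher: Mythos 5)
Your proposal is correct and follows essentially the same route as the paper, which simply says \textquotedblleft Hence, by Proposition \ref{corresirred}\textquotedblright\ after Theorem \ref{irr1n}. You have usefully made explicit the small point the paper glosses over, namely that $\MN^{cyc}(\pp_{1,n})$ is a non-empty open (hence dense, hence irreducible of full dimension) subset of $\MN(\pp_{1,n})$, verified via the explicit cyclic triple $(X^0,0,e_n)$; the arithmetic at the end is garbled as written but the value $m=n-1$ is correct.
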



\begin{rem}
  The above corollary was already proved in \cite{CE} with other
  techniques (Bialynicki-Birula stratifications and Gr\"obner basis
  computations).
\end{rem}

\begin{thm} \label{thmCNSIrred}
  $\skno$ is irreducible if and only if $k\in \{0,1,n-1,n\}$. 
  $\MN(\pp_{k,n})$ is irreducible if and only if $k\in \{0,1,n-1,n\}$.
\end{thm}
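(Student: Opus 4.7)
The plan is to combine Proposition \ref{irredOnlyFor1nMoins1}, Theorem \ref{irr1n}, Corollary \ref{coroIrredHilb1n}, Theorem \ref{irrcomvar}, the transposition isomorphism of Remark \ref{remSurLaSymetrie}, and the correspondence of Proposition \ref{corresirred}. The statement for $\skno$ reduces almost entirely to results already in hand, and the statement for $\MN(\pp_{k,n})$ follows by pushing irreducibility or reducibility across the quotient map $\pi_{k,n}$.

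First I would dispose of the boundary cases. When $k\in\{0,n\}$, the flag $F_0\subset F_k\subset F_n$ collapses, so $\pp_{k,n}=M_n$ and $\MN(\pp_{k,n})=\MN(M_n)$ is irreducible by Theorem \ref{irrcomvar}; simultaneously $\sxno{n-k}$ is either $\sxno{n}$ or $\sxno{0}$, both canonically identified with $\sno$ (in the case $k=0$ the subscheme $z_0$ is empty; in the case $k=n$ we have $z_k=z_n$), and Briançon's theorem gives irreducibility. For $k=1$, Theorem \ref{irr1n} provides the irreducibility of $\MN(\pp_{1,n})$ and the transposition isomorphism $\psi'$ of \eqref{transiso} gives $\MN(\pp_{n-1,n})\cong\MN(\pp_{1,n})$, hence irreducibility for $k=n-1$. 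For the Hilbert scheme counterpart, $\sxno{n-1}$ is irreducible by Corollary \ref{coroIrredHilb1n}, and $\sxno{1}$ is again isomorphic to $\sno$ since the only length-$1$ subscheme supported at the origin is the origin itself.

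For the reducibility direction, suppose $k\in\{2,\dots,n-2\}$, so $n-k$ also lies in this range. Proposition \ref{irredOnlyFor1nMoins1} shows that $\skno$ is reducible; this settles the Hilbert-scheme half of the theorem. To transfer reducibility to $\MN(\pp_{k,n})$, I apply Proposition \ref{corresirred} with parameter $n-k$ in place of $k$: since $\sxno{k}$ has at least two irreducible components, the open subvariety $\MN^{cyc}(\pp_{n-k,n})\subset\MN(\pp_{n-k,n})$ has at least two irreducible components as well. A standard fact about irreducible components of a (nonempty) open subset $U$ of a topological space $X$ is that they are in bijection, via closure, with the irreducible components of $X$ meeting $U$; so $\MN(\pp_{n-k,n})$ is reducible. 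Relabelling $n-k$ as $k$ gives the reducibility of $\MN(\pp_{k,n})$ for every $k\in\{2,\dots,n-2\}$.

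The only delicate step is verifying that $\MN^{cyc}(\pp_{n-k,n})$ is nonempty (so that the opening intersection argument is not vacuous), and that distinct components of the cyclic locus genuinely lift to distinct components of the whole commuting variety. The first point follows since the surjection $\widetilde\pi_{k,n}$ of Theorem \ref{thmcorres} shows $\MNTilde^{cyc}$ dominates the nonempty Hilbert scheme, so $\MN^{cyc}\neq\emptyset$. The second point is the general open-subset principle recalled above. I expect this bookkeeping to be the main (mild) obstacle; the rest is direct assembly of the cited results.
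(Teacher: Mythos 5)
Your proof is correct and follows essentially the same route as the paper: settle $k\in\{0,1,n-1,n\}$ via Theorem \ref{irrcomvar}, Theorem \ref{irr1n}, the transposition isomorphism, Brian\c con, and Corollary \ref{coroIrredHilb1n}, then transfer reducibility from $\skno$ (Proposition \ref{irredOnlyFor1nMoins1}) to $\MN(\pp_{k,n})$ via Proposition \ref{corresirred} and the fact that the number of components of an open subset is bounded above by that of the ambient variety. Your write-up is in fact a bit more explicit than the paper's on the open-subset principle and the nonemptiness of the cyclic locus; the only minor difference is that the paper relates $\MN(\pp_{k,n})$ directly to $\sxno{n-k}$ rather than relabelling indices, which is a cosmetic choice.
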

\begin{proof}
  Since $\sno$ is irreducible \cite{Br,Pr}, since $\sxno{1}$ is
  homeomorphic to $\sno$ and $\sxno{n}$ is isomorphic to $\sno$, this
  proves together with Proposition \ref{irredOnlyFor1nMoins1} the
  assertion of the Theorem for $\skno$. The variety 
  $\MN(\pp_{k,n})$ is irreducible for $k=1$ by Theorem
  \ref{irr1n}. The transposition isomorphism of \eqref{transiso} implies that
  $\MN(\pp_{n-1,n})\simeq \MN(\pp_{1,n})$ is irreducible
  too. The varieties $\MN(\pp_{0,n})=\MN(\pp_{n,n})=\MN(M_n)$ are
  irreducible by Theorem \ref{irrcomvar}. Finally, the number of components in $\MN(\pp_{k,n})$ is greater or equal than the number of components of $\MN^{cyc}(\pp_{k,n})$ which is, in turn,
 equal to the number of components in $\sxno{n-k}$ (Proposition \ref{corresirred}). It follows that
  $\MN(\pp_{k,n})$ is reducible for $k\in \{2,\dots,n-2\}$
\end{proof}

\begin{coro}\label{thmCNSIrredq}
  $\sknoemb$ is irreducible if and only if $k\in \{n-1,n\}$ or $n\leqslant 3$. 
  $\MN(\qq_{k,n})$ is irreducible if and only if $k\in \{0,1\}$ or $n\leqslant 3$.
\end{coro}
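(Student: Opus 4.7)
The plan is to use Proposition \ref{corresirred}, which identifies components of $\sxnoemb{n-k}$ with those of $\MN^{cyc}(\qq_{k,n})$, as a bridge between the two statements. Since $(X^0,0)$ with $X^0$ a regular nilpotent in $\qq_{k,n}$ admits $e_n$ as a cyclic vector, $\MN^{cyc}(\qq_{k,n})$ is a non-empty open subset of $\MN(\qq_{k,n})$. In one direction, if $\MN(\qq_{k,n})$ is irreducible, so is $\MN^{cyc}(\qq_{k,n})$ (non-empty open subset of an irreducible variety), hence so is $\sxnoemb{n-k}$. In the other direction, if $\sxnoemb{n-k}$ has several components then so does $\MN^{cyc}(\qq_{k,n})$, and the closures of these components in $\MN(\qq_{k,n})$ produce at least as many components of $\MN(\qq_{k,n})$.

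For the irreducibility direction of $\MN(\qq_{k,n})$, the cases $k=0$ and $k=1$ are immediate from Theorems \ref{irrcomvar} and \ref{irr1n} applied to $\qq_{0,n}=M_n$ and $\qq_{1,n}=\pp_{1,n}$. For $n\le 3$, the only remaining situations are $n=2$ with $k=2$ (where $\MN(\qq_{2,2})\cong\mathbb{A}^2$, since any two strictly upper triangular $2\times 2$ matrices commute automatically) and $n=3$ with $k\in\{2,3\}$, both of which give the Borel of $M_3$; a direct computation shows $\MN(\mathrm{Borel}(M_3))$ is cut out in $\mathbb{A}^6$ by the single quadratic equation $af=cd$ (in the natural coordinates on pairs of strictly upper triangular matrices), hence is an irreducible quadric. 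The irreducibility of $\sxnoemb{k}$ for $k\in\{n-1,n\}$ or $n\le 3$ then follows by the transfer above, using also that $\sxyo{n}{n}$ is just $\sno$, which is irreducible by \cite{Br,Pr}.

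For the reducibility direction of $\sxnoemb{k}$, assume $n\ge 4$ and $k\le n-2$. When $k\in\{2,\dots,n-2\}$, combine the surjection $\sxnoemb{k}\twoheadrightarrow\sxno{k}$ of Proposition \ref{schemasintermediaires} with the reducibility of $\sxno{k}$ from Theorem \ref{thmCNSIrred}: if $\sxnoemb{k}$ were irreducible so would be its image, contradicting Theorem \ref{thmCNSIrred}. When $k\in\{0,1\}$, precompose with the further surjection $\sxnoemb{k}\twoheadrightarrow\sxnoemb{2}$ of Proposition \ref{schemasintermediaires} (valid because $n\ge 4$ implies $2\le n-2$) to reduce to the previous case. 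Reducibility of $\MN(\qq_{k,n})$ for $n\ge 4$ and $k\ge 2$ then follows by transferring the reducibility of $\sxnoemb{n-k}$ via Proposition \ref{corresirred} and the open inclusion $\MN^{cyc}\subset\MN$. The main potential obstacle is the $n=3$ Borel case, which is not directly subsumed by earlier results of the paper; in practice the explicit $af=cd$ calculation above is short enough to handle by hand, so this is not a serious difficulty.
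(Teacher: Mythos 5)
Your proof is correct and takes essentially the same route as the paper: it uses the open inclusion $\MN^{cyc}\subset\MN$ together with Propositions \ref{corresirred} and \ref{schemasintermediaires} to transfer irreducibility and reducibility between the Hilbert-scheme and commuting-variety sides, and falls back on Theorem \ref{thmCNSIrred} for the base cases. The one genuine addition is your explicit verification for $n\leqslant 3$ (the irreducible quadric $af=cd$ cutting out $\MN$ of the Borel of $M_3$, and $\AA^2$ for $n=2$), which the paper dismisses as ``easy computations.''
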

\begin{proof}
  Note that $\sknoemb\simeq \skno$ for $k=n-1,n$ and $\MN(\qq_{k,n})=
  \MN(\pp_{k,n})$ for $k=0,1$. The ``if'' part then follows 
  from Theorem \ref{thmCNSIrred} and easy computations when
  $n\leqslant 3$.
For $k\geqslant 2$, we have a sequence of surjective morphisms
$\MN^{cyc}(\qq_{k,n})\rightarrow
\sxnoemb{n-k}\rightarrow\sxnoemb{n-2}\rightarrow \sxno{n-2}$ (Propositions
\ref{corresirred} and \ref{schemasintermediaires}). Since
$\sxno{n-2}$ is reducible when $n\geqslant 4$,
the corollary follows.
%
\end{proof}

\section{General lower bounds for the dimension of the components}
\label{seclowbound}

The goal of this section is to give lower bounds for the dimension of
the components of $\MN(\pp_{k,n})$, $\MN(\qq_{k,n})$,
$\sxno{k}$,$\sxnoemb{k}$ which are valid for all $k,n$.

Let $n\geqslant 2$ and $1\leqslant k\leqslant n-1$. 

\begin{prop}\label{dimqq}
Each irreducible component of $\MN(\qq_{k,n})$ has dimension at least $\dim\qq_{k,n}-1$. 
\end{prop}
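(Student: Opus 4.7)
The plan is induction on $k$, the base cases $k = 0$ and $k = 1$ being supplied by Theorem~\ref{irrcomvar} and Theorem~\ref{irr1n} respectively, each giving an irreducible $\MN(\qq_{k,n})$ of dimension $\dim \qq_{k,n} - 1$. For the inductive step, assume $k \geq 2$ and consider the projection
\[
\pi : \MN(\qq_{k,n}) \twoheadrightarrow \MN(\qq_{k-1,n-1}), \qquad X \mapsto X^{(1)},
\]
induced by the action of $X$ on $V/V_1$, which is surjective via the trivial lift $X = \left(\begin{smallmatrix}0&0\\0&X'\end{smallmatrix}\right)$.

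Mimicking the almost-commutant construction of Section~\ref{secirrnmoins1}, for each irreducible component $\mathcal C'$ of $\MN(\qq_{k-1,n-1})$ I would introduce the graph variety
\[
\mathcal G_{\mathcal C'} := \{(X,Y,j) \in \qq_{k,n}^2 \times \Hom(V/V_1,V_1) \mid (X^{(1)},Y^{(1)}) \in \mathcal C',\; x_1 = y_1 = 0,\; j = x_{12}Y^{(1)} - y_{12}X^{(1)}\},
\]
which is isomorphic to $\mathcal C' \times \Hom(V/V_1,V_1)^2$, hence irreducible of dimension $\dim \mathcal C' + 2(n-1)$; under this identification $\pi^{-1}(\mathcal C') \cap \MN(\qq_{k,n})$ corresponds to $j^{-1}(0)$. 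The naive dimension count from Corollary~\ref{thmZobis} would treat $j=0$ as $n-1$ independent equations and yield only $\dim \mathcal C \geq \dim \qq_{k,n} - 2$; the key refinement is that the morphism $j : \mathcal G_{\mathcal C'} \to \Hom(V/V_1,V_1)$ factors through a hyperplane. Indeed, the hypothesis $k \geq 2$ guarantees that $\qq_{k-1,n-1}$ contains $\gl(V_2/V_1)=\K$ as its first Levi block; nilpotency of $X^{(1)}, Y^{(1)}$ therefore forces the corresponding diagonal block to vanish, so that $V_2/V_1 \subseteq \ker X^{(1)} \cap \ker Y^{(1)}$. Consequently $j(v) = x_{12}(Y^{(1)} v) - y_{12}(X^{(1)} v) = 0$ for every $v \in V_2/V_1$, placing $j$ inside the annihilator of the line $V_2/V_1$, a subspace of dimension $n-2$.

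Since $0 \in \overline{j(\mathcal G_{\mathcal C'})}$ (witnessed by $x_{12}=y_{12}=0$), Chevalley's fiber-dimension theorem applied to the dominant morphism $\mathcal G_{\mathcal C'} \twoheadrightarrow \overline{j(\mathcal G_{\mathcal C'})}$ ensures that every irreducible component of $j^{-1}(0)$ has dimension at least $\dim \mathcal G_{\mathcal C'} - (n-2) = \dim \mathcal C' + n$. Any component $\mathcal C$ of $\MN(\qq_{k,n})$ projects into some component $\mathcal C'$ of $\MN(\qq_{k-1,n-1})$ and, by maximality, is itself an irreducible component of $\pi^{-1}(\mathcal C') = j^{-1}(0)$; combined with the inductive bound $\dim \mathcal C' \geq \dim \qq_{k-1,n-1} - 1$ and the identity $\dim \qq_{k,n} = \dim \qq_{k-1,n-1} + n$, this produces $\dim \mathcal C \geq \dim \qq_{k,n} - 1$. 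The main obstacle is precisely detecting this flag-forced codimension-one defect in the image of $j$; without that observation the method falls short by exactly one dimension.
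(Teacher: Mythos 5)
Your proof is correct and follows essentially the same strategy as the paper: induction on $k$, the almost-commutant graph construction of Section~\ref{secirrnmoins1} (your $\mathcal G_{\mathcal C'}$ is the paper's $\MCM(\qq_{k,n})$ restricted over a single component of $\MN(\qq_{k-1,n-1})$), and the crucial observation that nilpotency combined with $k\geqslant 2$ forces $X^{(1)}(e_2)=Y^{(1)}(e_2)=0$, so the commutator-defect map has image in a codimension-one subspace of $\Hom(V/V_1,V_1)$, improving the naive bound by exactly one. The paper phrases the last step as "$\MN(\qq_{k,n})$ is cut out of $\MCM(\qq_{k,n})$ by $n-2$ equations" (Krull) where you invoke Chevalley on the dominant map to $\overline{j(\mathcal G_{\mathcal C'})}$, but these yield the identical dimension estimate.
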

\begin{proof}
We proceed by induction on $k$, the case $k=1$ being proved in Theorem \ref{irr1n} since $\qq_{1,n}=\pp_{1,n}$. 
Assume now $k\geqslant 2$. The proof mainly follows those of Proposition \ref{thmZo} and Corollary \ref{thmZobis}.

For any $X\in\qq_{k,n}$, we decompose $X=X_1+X_2+X_3$ as in \eqref{p}, with $X_3\in (\gl(U_1)\cap \qq_{k,n})\cong \qq_{k-1,n-1}$.
We let
\begin{eqnarray*}\MCM(\qq_{k,n})&:=&\left\{(X,Y,j)\in (\qq_{k,n}^{nil})^2\times \Hom(U_1,V_1) \,\left|\, [X,Y]-\decp{0}{j}{0}=0\right.\right\}
\end{eqnarray*} 
Then
$$(X,Y,j)\in \MCM(\qq_{k,n})\Leftrightarrow \left\{\begin{array}{c}(X_3,Y_3)\in\MN(\qq_{k-1,n-1}),\\ {X_1=Y_1=0}, \\ j=X_2Y_3-Y_2X_3\end{array}\right.$$

Thus $\MCM(\qq_{k,n})$ is isomorphic to the graph of 
$$\phi:\begin{array}{rcl}\MN(\qq_{k-1,n-1})\times(M_{1,n-1})^2&\rightarrow&M_{1,n-1}\\ ((X_3,Y_3),(X_2,Y_2))&\mapsto& X_2Y_3-Y_2X_3\end{array}.$$
Hence, by induction, each irreducible component of $\MCM(\qq_{k,n})$ has a dimension greater or equal than $\dim \qq_{k-1,n-1}-1+2(n-1)$.

Since $k-1\geqslant 1$, and $X_3,Y_3\in \gl(U_1)$ are nilpotent, we get $X_3(e_2)=Y_3(e_2)=0$. So the image of $\phi$ lies in $\langle {}^te_3, \dots{}^te_n\rangle$ and $\MN(\qq_{k,n})$ is defined in $\MCM(\qq_{k,n})$ by $n-2$ equations. Then the dimension of each of its irreducible component is greater or equal than $ \dim \qq_{k-1,n-1}-1+2(n-1)-(n-2)=\dim \qq_{k-1,n-1}+n-1=\dim \qq_{k,n}-1$.
\end{proof}
Hence, by Proposition \ref{corresirred}:
\begin{coro}\label{corodimsxno}
Each  irreducible component of $\sxnoemb{n-k}$ has dimension at least $n-1$ which is the dimension of the curvilinear component.
\end{coro}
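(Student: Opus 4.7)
The plan is to chain together Proposition \ref{dimqq} and Proposition \ref{corresirred}, which is indicated by the word ``Hence'' preceding the corollary.

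First I would recall that by Proposition \ref{dimqq}, every irreducible component of the nilpotent commuting variety $\MN(\qq_{k,n})$ has dimension at least $\dim\qq_{k,n}-1$. The open subvariety $\MN^{cyc}(\qq_{k,n})\subset\MN(\qq_{k,n})$ is cut out by the nonvanishing condition that some pair $(X,Y)$ admits a cyclic vector, so each nonempty irreducible component of $\MN^{cyc}(\qq_{k,n})$ is open in an irreducible component of $\MN(\qq_{k,n})$ and therefore inherits the same dimension. In particular, every irreducible component of $\MN^{cyc}(\qq_{k,n})$ has dimension at least $\dim\qq_{k,n}-1$.

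Next I would apply Proposition \ref{corresirred} to the morphism $\pi'_{k,n}:\MN^{cyc}(\qq_{k,n})\to\sxnoemb{n-k}$: this gives a bijection between irreducible components of $\sxnoemb{n-k}$ of dimension $m$ and irreducible components of $\MN^{cyc}(\qq_{k,n})$ of dimension $m+(\dim\qq_{k,n}-n)$. Consequently, if $L\subset\sxnoemb{n-k}$ is an irreducible component of dimension $m$, the corresponding component $L'$ of $\MN^{cyc}(\qq_{k,n})$ has dimension $m+\dim\qq_{k,n}-n$, and the bound from the previous paragraph forces
\[
m+\dim\qq_{k,n}-n\;\geqslant\;\dim\qq_{k,n}-1,
\]
i.e.\ $m\geqslant n-1$.

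Finally I would justify the claim that $n-1$ is actually achieved, by exhibiting the curvilinear component: the curvilinear locus $C^n\subset\sno$ is irreducible of dimension $n-1$ (as recalled in the proof of Proposition \ref{irredOnlyFor1nMoins1}), and since a curvilinear $z_n=(x,y^n)$ has a unique flag of sub-ideals $(x,y^i)\subset(x,y^{j})$ for $i\leqslant j$, the fiber over any curvilinear point of the projection $\sxnoemb{n-k}\to\sno$ is a single point. Thus the closure of the curvilinear locus in $\sxnoemb{n-k}$ is an irreducible subvariety of dimension exactly $n-1$, and by the lower bound just proved it must be an irreducible component. There is no real obstacle here beyond matching conventions; the only point to verify carefully is that components of $\MN^{cyc}(\qq_{k,n})$ really do have the same dimension as the enclosing components of $\MN(\qq_{k,n})$, which is automatic from $\MN^{cyc}$ being open inside $\MN$.
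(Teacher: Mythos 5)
Your main argument is exactly the paper's: Proposition \ref{dimqq} gives the lower bound $\dim\qq_{k,n}-1$ on components of $\MN(\qq_{k,n})$, openness of $\MN^{cyc}(\qq_{k,n})$ transfers it to $\MN^{cyc}(\qq_{k,n})$, and the dimension shift in Proposition \ref{corresirred} converts it to $m\geqslant n-1$ for $\sxnoemb{n-k}$; the paper compresses this into the single word ``Hence.'' One small flaw in your closing paragraph: you claim the closure of the curvilinear locus is a component ``by the lower bound just proved,'' but an irreducible closed subvariety of dimension $n-1$ could still sit inside a larger component of dimension $>n-1$, so the lower bound alone does not force it to be maximal. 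The correct reason (used in the proof of Proposition \ref{irredOnlyFor1nMoins1}) is that the curvilinear locus maps bijectively onto an \emph{open} subset of $\sno$, so any irreducible subvariety containing it has dimension $n-1$; that is what makes its closure a component. Since the corollary only asserts the lower bound and mentions the curvilinear component as known context, this does not affect the validity of your proof of the stated inequality.
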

\begin{rem}When $k=n$, Proposition~\ref{dimqq} provides a lower bound for the dimension of the nilpotent commuting variety of the Borel subalgebra $\qq_{n,n}$. In this case, a simpler proof is given by considering the bracket map $\nf\times\nf\rightarrow[\nf,\nf]$ where $\nf$ is the nilradical of a Borel $\bb$. Its fibers, in particular its null one which is equal to $\MN(\bb)$, are of dimension greater or equal than $2\dim \nf-\dim [\nf,\nf]=\dim \bb$  in an arbitrary semisimple Lie algebra.  When $\bb$ acts on $\nf$ with finitely many orbits, a computation similar to \eqref{codimnil} then shows that $\MN(\bb)$ is an equidimensional variety. This simplifies some of the arguments of \cite{GR}, where this result was first proved, since it allows to avoid Strategy 2.10 (2-3) in this case.
\end{rem}

Unfortunately, concerning $\pp_{k,n}$ we are only able to give the following less effective bound.
\begin{prop}
Each irreducible component of $\MN(\pp_{k,n})$ has dimension at least $\dim\pp_{k,n}-2$. 
\end{prop}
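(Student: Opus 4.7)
The plan is to extend the graph argument of Proposition \ref{thmZo} and Corollary \ref{thmZobis} (which handled $k=1$) to arbitrary $k$. I would decompose, as vector spaces,
$$\pp_{k,n} = \gl(V_k) \oplus \Hom(U_k, V_k) \oplus \gl(U_k) \cong M_k \oplus M_{k,n-k} \oplus M_{n-k},$$
and write each $X \in \pp_{k,n}$ accordingly as $X = X_1 + X_2 + X_3$ with $X_1 \in M_k$, $X_2 \in M_{k,n-k}$, $X_3 \in M_{n-k}$. I would then introduce the auxiliary variety
$$\MCM(\pp_{k,n}) := \left\{ (X, Y, j) \in (\pp_{k,n}^{nil})^2 \times \Hom(U_k, V_k) \,\left|\, [X,Y] = \begin{pmatrix} 0 & j \\ 0 & 0 \end{pmatrix} \right.\right\}.$$

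The key block computation
$$[X,Y] = \begin{pmatrix} [X_1, Y_1] & X_1 Y_2 - Y_1 X_2 + X_2 Y_3 - Y_2 X_3 \\ 0 & [X_3, Y_3] \end{pmatrix},$$
together with Lemma \ref{commons}(ii) (nilpotency in $\pp_{k,n}$ reduces to nilpotency of the two diagonal blocks $X_1, X_3$), identifies $\MCM(\pp_{k,n})$ with the graph of the morphism
$$\MN(M_k) \times \MN(M_{n-k}) \times (M_{k,n-k})^2 \longrightarrow M_{k,n-k},$$
sending $((X_1, Y_1), (X_3, Y_3), (X_2, Y_2))$ to $X_1 Y_2 - Y_1 X_2 + X_2 Y_3 - Y_2 X_3$. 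By Theorem \ref{irrcomvar}, the two factors $\MN(M_k)$ and $\MN(M_{n-k})$ are irreducible, of dimensions $k^2 - 1$ and $(n-k)^2 - 1$ respectively. Hence $\MCM(\pp_{k,n})$ is irreducible of dimension $(k^2-1) + ((n-k)^2-1) + 2k(n-k) = n^2 - 2$.

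Finally, the closed embedding $\MN(\pp_{k,n}) \hookrightarrow \MCM(\pp_{k,n})$ given by $(X,Y) \mapsto (X, Y, 0)$ realises $\MN(\pp_{k,n})$ as the zero locus of the $k(n-k)$ coordinate functions of $j$ inside the irreducible variety $\MCM(\pp_{k,n})$. Every irreducible component of $\MN(\pp_{k,n})$ therefore has dimension at least $n^2 - 2 - k(n-k)$, which, since $\dim \pp_{k,n} = k^2 + k(n-k) + (n-k)^2 = n^2 - k(n-k)$, is exactly $\dim \pp_{k,n} - 2$. The argument is truly parallel to the $k=1$ case; no conceptual obstacle is introduced by going to general $k$, only the minor bookkeeping that the upper-left block $X_1 \in M_k$ need not vanish (as it did in the case $\gl(V_1) \cong \K$ of Proposition \ref{thmZo}), which is precisely what makes the $(k^2-1)$-dimensional factor $\MN(M_k)$ appear.
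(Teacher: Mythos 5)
Your proof is correct and takes essentially the same approach as the paper: introducing the auxiliary variety $\MCM(\pp_{k,n})$ of almost-commuting nilpotent pairs, recognising it as a graph over $\MN(M_k)\times\MN(M_{n-k})\times(M_{k,n-k})^2$ whose irreducibility and dimension $n^2-2$ follow from Theorem \ref{irrcomvar}, and then cutting out $\MN(\pp_{k,n})$ by the $k(n-k)$ equations $B=0$. The paper states this more tersely (merely pointing to the analogous Proposition \ref{thmZo}), while you have filled in the block computation of $[X,Y]$ and the explicit use of Lemma \ref{commons}(ii), but the argument is the same.
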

\begin{proof}
Let $$\MCM(\pp_{k,n}):=\left\{(X,Y,B)\in \pp_{k,n}^2\times
  \Hom(U_k,V_k) \left| [X,Y]-\decp{0}{B}{0}=0\right.\right\}.$$ 
Once again, we proceed in a similar way to Proposition \ref{thmZo}.

Hence, $\MCM(\pp_{k,n})$ is isomorphic to the graph of a morphism with an irreducible
domain of dimension $(k^2-1)+((n-k)^2-1)+2k(n-k)$ and $\MN(\pp_{k,n})$
is defined in $\MCM(\pp_{k,n})$ by $k(n-k)$ equations. Hence, the
dimension of each irreducible components of $\MN(\pp_{k,n})$ is
greater or equal than $k^2+(n-k)^2+k(n-k)-2=\dim \pp_{k,n}-2$.
\end{proof}

Finally, we have the following consequence concerning nested Hilbert schemes (cf. Proposition \ref{corresirred}).
\begin{coro}
Each irreducible component of $\sxno{n-k}$ has dimension at least $n-2$, which is the dimension of the curvilinear component minus one.
\end{coro}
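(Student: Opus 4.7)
The plan is to transfer the dimension lower bound from the commuting variety $\MN(\pp_{k,n})$ down to the Hilbert scheme $\sxno{n-k}$ via the component bijection of Proposition~\ref{corresirred}. The corollary should follow almost immediately from the two preceding pieces of machinery; no new ideas are needed.

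First I would invoke Proposition~\ref{corresirred}: the morphism $\pi_{k,n}$ induces a bijection between irreducible components of $\sxno{n-k}$ of dimension $m$ and irreducible components of $\MN^{cyc}(\pp_{k,n})$ of dimension $m + \dim\pp_{k,n} - n$. So it suffices to control the dimension of the components of $\MN^{cyc}(\pp_{k,n})$.

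Next I would use that $\MN^{cyc}(\pp_{k,n})$ is an open subvariety of $\MN(\pp_{k,n})$. Therefore its irreducible components are precisely the intersections $C \cap \MN^{cyc}(\pp_{k,n})$ for $C$ running through the irreducible components of $\MN(\pp_{k,n})$ meeting the cyclic locus, and each such trace has the same dimension as $C$. Combined with the preceding proposition, which asserts $\dim C \geq \dim \pp_{k,n} - 2$, any component of $\MN^{cyc}(\pp_{k,n})$ has dimension at least $\dim \pp_{k,n} - 2$. Substituting into the bijection then yields $m \geq (\dim \pp_{k,n} - 2) - (\dim \pp_{k,n} - n) = n - 2$, as desired.

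For the comparison with the curvilinear component, I would recall (as already used in the proof of Proposition~\ref{irredOnlyFor1nMoins1}) that Brian\c{c}on's curvilinear locus in $\sno$ has dimension $n-1$, and that the projection $\sxno{n-k} \to \sno$ restricts to a bijection on curvilinear loci, so that the curvilinear component of $\sxno{n-k}$ is again of dimension $n-1$. Hence $n-2$ is precisely one less than its dimension. I do not anticipate any obstacle here: the entire argument is a short dimension-chasing deduction from the two previously established results, and it mirrors the proof of Corollary~\ref{corodimsxno} for $\sxnoemb{n-k}$ just above.
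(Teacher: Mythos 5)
Your proposal is correct and follows essentially the same route as the paper: the paper's proof is just the reference ``(cf. Proposition~\ref{corresirred})'' applied to the preceding bound on components of $\MN(\pp_{k,n})$, and you have simply spelled out the dimension-chasing (passing through the open locus $\MN^{cyc}(\pp_{k,n})$) that this citation leaves implicit.
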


Applying naively the same argument to a general parabolic subalgebra $\pp$ whose Levi part has $\ell$ blocks, one can show that the dimension of any irreducible component of $\MN(\pp)$ (resp. of the corresponding Hilbert scheme) has dimension at least $D-(\ell-1)$ with $D=\dim \pp-1$ (resp. $D=n-1$). We think that the correct dimension should be $D$ but were only able to prove this in special cases such as $\qq_{k,n}$. In fact, in this case as in some others, the extra codimension yielded by the Levi-blocks of size $1$ can be discarded easily, hence the optimal result. 

\section{Detailed study of $\sdeuxno$}

In the special cases $k=2$ and $k=n-2$, we have a more precise
estimate for the dimension of the components. The goal of this 
section is to describe the number and the dimension of the components 
for $\MN(\pp_{2,n})\simeq \MN(\pp_{n-2,n})$, $\MN(\qq_{2,n})$, 
$\sdeuxno$,$\sxno{n-2}$, $\sxnoemb{n-2}$. The general strategy is the same as in Section \ref{secirrnmoins1}.

Our first aim is a classification of orbits.

We identify $\qq_{2,n}$ with 
$$\gl(V_1)\oplus\Hom(U_1,V_1)\oplus\{X\in \gl(U_1)|X(e_2)\in \CC e_2\}\stackrel{v.s.}{\cong} \K\oplus M_{1,n-1}\oplus\qq_{1,n-1}.$$ Again, we decompose each $X\in \qq_{2,n}$ with respect to this direct sum \begin{equation}\label{decXp2}X=\decp{X_1}{X_2}{X_3}.\end{equation}
For any $\bolda=(\lambda_1, (\lambda_2\geqslant \dots\geqslant
\lambda_{d_{\bolda}}))\in \Pn'(n)$, we set
$\lambda_{d_{\bolda}+1}=0$. Let 
\begin{equation}\label{defpseconde}
\mathcal
P''(n):=\left\{(\bolda,l,\epsilon)\in \mathcal
  P'(n-1)\times\N\times\{0,1\}\left| 
\begin{array}{l}l=\lambda_i\mbox{ for some $i\in[\![2,
    d_{\bolda}+1]\!]$ }\\
\epsilon=1\Rightarrow (l>\lambda_1 \mbox{ or } l=0)
\end{array}\right.\right\}.\end{equation}
For $\boldmu=(\bolda,l,\epsilon)\in \mathcal P''(n)$, we define $g^i_j:=e_{\sum_{\ell=1}^{i-1}\lambda_{\ell}+j+1}$ and $i_{\boldmu}:=\min\{i'>1\mid l=\lambda_{i'}\}\in [\![2, d_{\bolda}+1]\!]$.
In the basis 
$(e_1,(g^i_j)_{\left(\begin{subarray}{l}1\leqslant i\leqslant d_{\bolda}\\ 1\leqslant j\leqslant \lambda_i\end{subarray}\right)})$, we define $X_{\boldmu}\in\qq_{2,n}$ via
$$ X_{\boldmu}(e_1):=0, \qquad
X_{\boldmu}(g^i_j):=\left\{\begin{array}{ll} g^i_{j-1} &\mbox{ if $j>1$}\\
  \epsilon\, e_1 &\mbox{ if $i=1$, $j=1$}\\  e_1 &\mbox{ if
    $i=i_{\boldmu}$ and $j=1$} \\ 0 & \mbox{
    else}\end{array}\right..$$
\[X_{\boldmu}={\scriptsize\left(\begin{array}{c!{\vrule width 1pt}cccc|cc|cccc|cc|}0 &\epsilon &0&\phantom{0}&\multicolumn{1}{c}{\cdots}&\phantom{0}&\multicolumn{2}{c}{0 \hspace{10pt} 1}&0&\phantom{0}&\multicolumn{1}{c}{\cdots}&\multicolumn{1}{c}{\phantom{0}}&\multicolumn{1}{c}{0}\\
\Hline{1pt}
& 0&1&&\\
&&\ddots&\ddots&\\
&&&\ddots&1\\
&&&&0\\
\cline{2-7}
&&&&&&&\\
&&&&&&&\\
\cline{6-11}
&&&&\multicolumn{1}{c}{\phantom{0}}&&& 0&1&&\\
&&&&\multicolumn{1}{c}{\phantom{0}}&&&&\ddots&\ddots&\\
&&&&\multicolumn{1}{c}{\phantom{0}}&&&&&\ddots&1\\
&&&&\multicolumn{1}{c}{\phantom{0}}&&&&&&0\\
\cline{8-13}
&&&&\multicolumn{1}{c}{\phantom{0}}&&\multicolumn{1}{c}{\phantom{0}}&&&&&&\\
&&&&\multicolumn{1}{c}{\phantom{0}}&&\multicolumn{1}{c}{\phantom{0}}&&&&&&\\
\cline{12-13}
&\undermat{\lambda_1}{\phantom{0}&\phantom{\cdots}&\phantom{\cdots}&\multicolumn{1}{c}{\phantom{0}}}
&\phantom{0} &\multicolumn{1}{c}{\phantom{0}}& \undermat{\lambda_{i_{\boldmu}}}{\phantom{0}&\phantom{\cdots}&\phantom{\cdots}&\multicolumn{1}{c}{\phantom{0}}}&
\end{array}\right)}\]
\medskip

Note that in the basis $(g^i_j)_{i,j}$ of $U_1$, we have $(X_{\boldmu})_3=X_{\bolda}$ in the notation of  \eqref{Xboldap}. We claim that $(X_{\boldmu})_{\boldmu\in \mathcal P''(n)}$ is a set of representatives of nilpotent orbits of $\qq_{2,n}$.

\begin{lm}\label{Xmu}
Each nilpotent element of $\qq_{2,n}$ (resp. $\pp_{2,n}$) is $Q_{2,n}$(resp. $P_{2,n}$)-conjugated to $X_{\boldmu}$ for some $\boldmu\in \mathcal P''(n)$.\\ Moreover $Q_{2,n}\cdot X_{\boldmu}=Q_{2,n}\cdot X_{\boldmu'}$ if and only if $\boldmu=\boldmu'$.
\end{lm}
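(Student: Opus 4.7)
The plan is to extend the strategy of Lemma \ref{classp1n} to the two-step flag setting by first normalizing the block $X_3$ and then classifying the remaining freedom in $X_2$. I will treat the $\qq_{2,n}$ case in detail; the $\pp_{2,n}$ statement then follows by noting that for any $X\in\pp_{2,n}^{nil}$ the top-left $2\times 2$ block is a nilpotent matrix of $\gl(V_2)$, hence $\GL(V_2)\subset P_{2,n}$-conjugate to its Jordan form, which preserves $V_1$. Thus every $P_{2,n}$-orbit on $\pp_{2,n}^{nil}$ meets $\qq_{2,n}^{nil}$, and existence reduces to the $\qq_{2,n}$ case.

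Writing $X=\decp{X_1}{X_2}{X_3}$ as in \eqref{decXp2}, nilpotency forces $X_1=0$ and places $X_3$ in $\qq_{1,n-1}^{nil}=\pp_{1,n-1}^{nil}$. Using the natural embedding $Q_{1,n-1}\hookrightarrow Q_{2,n}$ (acting trivially on $V_1$) together with Lemma \ref{classp1n}, I conjugate $X$ so that $X_3=X_{\bolda}$ for a unique $\bolda\in\mathcal P'(n-1)$. A direct block computation, analogous to \eqref{paction}, shows that the residual action on $X_2$ by elements of the form $p=\decp{p_1}{p_2}{p_3}$ with $p_3\in(Q_{1,n-1})^{X_{\bolda}}$ is $X_2\mapsto(p_1 X_2+p_2 X_{\bolda})p_3^{-1}$. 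Since $\{p_2 X_{\bolda}:p_2\in\Hom(U_1,V_1)\}=\Img({}^t X_{\bolda})$, the $p_2$-freedom lets me assume $X_2$ lies in a complement, which I take to be $\Hom(\Ker X_{\bolda},V_1)=\langle h^i:={}^tg^i_1\rangle_{i=1}^{d_{\bolda}}$, and write $X_2=\sum_i c_i h^i$.

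The heart of the proof is the classification of orbits of $\K^*\times(Q_{1,n-1})^{X_{\bolda}}$ on $\langle h^i\rangle$. By Lemma \ref{extpara}, $(Q_{1,n-1})^{X_{\bolda}}$ projects via $pr_{ext}$ onto the parabolic $R\subset\GL(W)$ (with $W=\Ker X_{\bolda}$) preserving both the flag $W_{\ell}=\langle g^i_1:\lambda_i\geqslant\ell\rangle$ and the distinguished line $\K g^1_1$. Setting $l^*:=\max\{\lambda_i:c_i\neq 0,\;i\geqslant 2\}$ (with $\max\emptyset=0$) and $i_{\boldmu}:=\min\{i\geqslant 2:\lambda_i=l^*\}$ (taken to be $d_{\bolda}+1$ when $l^*=0$), I will check that the Levi of $R$ acts on each graded piece $W'_{\ell}$ as the full $\GL(W'_{\ell})$ for $\ell\neq\lambda_1$ and as the stabilizer of $\K\bar g^1_1$ for $\ell=\lambda_1$, while the unipotent radical of $R$ dually shifts each $h^{i'}$ into $\langle h^i:i\geqslant 2,\;\lambda_i<\lambda_{i'}\rangle$. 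The crucial feature, forced by preservation of $\K g^1_1$, is that this unipotent \emph{never} produces an $h^1$-contribution: the coefficient $c_1$ is preserved up to Levi rescaling. Normalizing the top graded component via the Levi and absorbing the lower graded parts via the unipotent, $X_2$ reduces to $\epsilon h^1+h^{i_{\boldmu}}$, which is precisely $(X_{\boldmu})_2$. The asymmetric constraint $\epsilon=1\Rightarrow(l>\lambda_1\text{ or }l=0)$ reflects exactly the case $l^*=\lambda_1>0$, where $h^1$ lives in the same graded piece as the leading term and is absorbed by the Borel-type Levi action on $(W'_{\lambda_1})^*$, forcing $\epsilon=0$.

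For uniqueness, if $q\cdot X_{\boldmu}=X_{\boldmu'}$ with $q\in Q_{2,n}$, projecting to the $X_3$-block gives $q_3 X_{\bolda}q_3^{-1}=X_{\bolda'}$ with $q_3\in Q_{1,n-1}$; Lemma \ref{classp1n} forces $\bolda=\bolda'$ and $q_3\in(Q_{1,n-1})^{X_{\bolda}}$, and the orbit classification above then pins down $(l,\epsilon)=(l',\epsilon')$. The main obstacle is the orbit analysis for $X_2$: the subtle interplay between the flag $\{W_{\ell}\}$ and the distinguished line $\K g^1_1$ inside $W_{\lambda_1}$ breaks the naive reductive classification and is precisely what produces the asymmetric constraint in the definition of $\mathcal P''(n)$. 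Correctly tracking when the $h^1$-contribution is absorbable by the Levi (only when $l=\lambda_1>0$) versus locked in by the unipotent is where the argument requires the most care.
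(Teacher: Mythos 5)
Your proof follows essentially the same route as the paper: reduce $\pp_{2,n}$ to $\qq_{2,n}$ by trigonalizing the $\gl(V_2)$-block, normalize $X_3=X_{\bolda}$ via Lemma~\ref{classp1n}, then classify the residual action of $\K^*\times (Q_{1,n-1})^{X_{\bolda}}$ on $X_2$ through the projection $pr_{ext}$. Up to this point the structural observations are correct, and in particular your remark that ``the coefficient $c_1$ is preserved up to Levi rescaling'' is exactly right: the stabilizer of $\K g^1_1$ acts on the quotient $W^*/\langle h^i : i\geqslant 2\rangle\cong \K h^1$ by a character, so $\epsilon=(c_1\neq 0)$ is an orbit invariant.

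However, the explanation of the constraint $\epsilon=1\Rightarrow(l>\lambda_1\text{ or }l=0)$ is backwards, and this is a genuine error, not a slip. You write that when $c_1\neq 0$ and $l^*=\lambda_1>0$, ``$h^1$ \dots\ is absorbed by the Borel-type Levi action on $(W'_{\lambda_1})^*$, forcing $\epsilon=0$.'' This contradicts your own earlier (correct) observation and is wrong on its face: if $c_1\neq 0$ then $c_1$ can never be killed. What actually happens is the opposite. The subgroup $(Q_{1,n-1}^{X_{\bolda}})_{ext}$ stabilizes $\K\,{}^tg^1_1\oplus\mathcal W_{\lambda_1}$ (where $\mathcal W_{\lambda_1}=\langle {}^tg^i_1 \mid \lambda_i\leqslant\lambda_1, i\neq 1\rangle$), so elements that move $h^1$ can add to $h^1$ an \emph{arbitrary} element of $\mathcal W_{\lambda_1}$. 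Hence when $c_1\neq 0$, every coefficient $c_i$ with $i\geqslant 2$ and $\lambda_i\leqslant\lambda_1$ is absorbed, and the normal form is $X_2={}^tg^1_1$, i.e.\ $\boldmu=(\bolda,0,1)$. It is $l$ that collapses to $0$, not $\epsilon$. With the identification you wrote, the putative normal form $\boldmu=(\bolda,\lambda_1,0)$ would coincide with the representative of a genuinely different orbit (the one with $c_1=0$), so both existence-with-the-correct-$\boldmu$ and uniqueness would break. The fix is local (replace the absorbed term), but the proof as written does not establish the statement. One may also remark that the Levi restricted to $W'_{\lambda_1}$ is a mirabolic (line stabilizer), not a Borel; that looser phrasing is likely where the sign of the argument got reversed.
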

\begin{proof}
Thanks to the inclusion $(\GL(V_2)\times Id_{U_2})\subset P_{2,n}$, we can trigonalize the $\gl(V_2)$-part of any element of $\pp_{2,n}$, hence each element of $\pp_{2,n}$ is $P_{2,n}$-conjugated to an element of $\qq_{2,n}$. Since $Q_{2,n}\subset P_{2,n}$, it is therefore sufficient to prove the result for $\qq_{2,n}$.

Let $X=X_1+X_2+X_3\in\qq_{2,n}$ be a nilpotent element. We have
$X_1=0$. The element $X_3$ is nilpotent so, up to conjugacy by an
element of $(Id_{V_1}\times Q_{1,n-1})\subset Q_{2,n}$, we may assume
that $X_3=X_{\bolda}$ for some fixed  $\bolda\in \mathcal P'(n-1)$
(Lemma \ref{classp1n}). 
Let $Q'\subset Q_{2,n}$
be the subgroup of elements stabilizing this part
$X_3=X_{\bolda}$, that is
$Q'={\left\{\left.q=\decp{q_1}{q_2}{q_3}\right|q_3\in
    Q_{1,n}^{X_{\bolda}}\right\}}$. For $q\in Q'$ we get
(cf. \eqref{paction}): $$q\cdot X=\decp{0}{q_1X_2q_3^{-1}+q_2
  X_{\bolda}q_3^{-1}}{X_{\bolda}}=\decp{0}{X_2q_1q_3^{-1}+q_2
  q_3^{-1}X_{\bolda}}{X_{\bolda}}.$$ 
Hence, we are reduced to classify the different $Q'$-orbits in $\Hom(U_1,V_1)\stackrel{v.s.}{\cong} \langle {}^tg^i_j \rangle_{i,j}\cong \K^{n-1}$ with respect to the action of $Q'$ given by $$q\cdot X_2=X_2q_1q_3^{-1}+q_2 q_3^{-1}X_{\bolda}.$$

In particular, $Q'\cdot X_2=X_2 \K^*Q_{1,n-1}^{X_{\bolda}}+(\K^{n-1})X_{\bolda}
=X_2 Q_{1,n-1}^{X_{\bolda}}+\Img({}^t X_{\bolda})$. We have $\Img({}^t
X_{\bolda})=\langle {}^tg^i_j\mid j\geqslant 2 \rangle$ and this
subspace is stable under the right action of
$Q_{1,n-1}^{X_{\bolda}}$. There remains to understand the
$Q_{1,n-1}^{X_{\bolda}}$-action on the quotient space
$\K^{n-1}/\Img({}^tX_{\bolda})\cong \langle {}^tg^i_1\mid
i\in[\![1,d_{\bolda}]\!] \rangle$. Under notation of section
\ref{tech}, this corresponds to the right action of
$(Q_{1,n-1}^{X_{\bolda}})_{ext}$ on $\mathcal W:=\langle {}^tg^i_1\mid
i\in[\![1,d_{\bolda}]\!] \rangle.$  
In the left action setting on $ \langle g^i_1\mid
i\in[\![1,d_{\bolda}]\!] \rangle$, $(Q_{1,n-1}^{X_{\bolda}})_{ext}$
can be described as the subgroup stabilizing $\langle g^1_1\rangle$ in
the parabolic subgroup stabilizing each $W_{\ell}=\langle
g_1^i|\lambda_i\geqslant \ell \rangle$ (Lemma \ref{extpara}). 

Picturally, this corresponds to a group of the following form: 
$$ (Q_{1,n-1}^{X_{\bolda}})_{ext}={\tiny\left(\begin{array}{c c c c c c c c c c}*&0&\dots&\dots &0&*&*&*&*&*\\
0 & *&*&*&*&\vdots &\vdots &\vdots &\vdots &\vdots\\
\vdots & *&*&\vdots&\vdots&\vdots &\vdots &\vdots &\vdots &\vdots\\
\vdots & 0&0&*&*&\vdots &\vdots &\vdots &\vdots &\vdots\\
\vdots & \vdots&\vdots&*&*&\vdots &\vdots &\vdots &\vdots &\vdots\\
\vdots & \vdots&\vdots&0  &0 &*&*&*&\vdots &\vdots\\
\vdots & \vdots&\vdots&\vdots  &\vdots &*&*&*&\vdots &\vdots\\
\vdots & \vdots&\vdots&\vdots &\vdots &*&*&*&\vdots &\vdots\\
\vdots & \vdots&\vdots&\vdots &\vdots &0&0&0&* &\vdots\\
\undermat{i=1}{\;\;0\;\;} & \undermat{\{i|\lambda_i>\lambda_1\}}{0&\;\;0\;\;&\;\;0\;\; &0}& \undermat{\{i|\lambda_i\leqslant\lambda_1\}}{0&0&0&0 &*}\\
\end{array}\right).
}$$
\medskip

In the right action setting, let 
$\mathcal W_{\ell}:=\langle {}^tg_1^i\,|\,\lambda_i \leqslant \ell, i\neq 1  \rangle$.
We see that $(Q_{1,n-1}^{X_{\bolda}})_{ext}$ is the subgroup of
$M_{d_{\bolda}}$ stabilizing $\K \,{}^tg^1_1\oplus \mathcal W_{\lambda_1}$ and 
each $\mathcal W_{\ell}$ ($\ell\in\N^*$).

Let $i_0:=\min(\{i>1\, | \, X_2(g_1^{i'})\neq 0 \mbox{ for some $i'>1$ such that $\lambda_i=\lambda_{i'}$}\}\cup\{d_{\bolda}+1\})$ and, if $i_0=d_{\bolda}+1$, we let ${}^tg_1^{i_0}:=0$. We get
\begin{equation*}X_2\cdot (Q_{1,n-1}^{X_{\bolda}})_{ext}\supseteq 
\left\langle{}^tg^i_1\left|\begin{array}{c}i>1\\ \lambda_i=\lambda_{i_0}\end{array}\right.\right\rangle\setminus\{0\}+
\left\langle{}^tg^i_1\left|\begin{array}{c}i>1\\ \lambda_{i}<\lambda_{i_0}\end{array}\right.\right\rangle=:A. \end{equation*}


On the other hand, if $X_2(g^1_1)\neq 0$, we set $\epsilon=1$; otherwise we set $\epsilon=0$. Then:
\begin{equation}\label{acQ1nm1}X_2\cdot (Q_{1,n-1}^{X_{\bolda}})_{ext}=\left\{\begin{array}{c l} A+\K^*({}^tg^1_1)+\left\langle {}^tg^i_1\left| \begin{array}{c} i>1,\\ \lambda_i \leqslant \lambda_1\end{array}\right.\right\rangle 
&\textrm{ if $\epsilon=1$,} \\
A &\textrm{ if $\epsilon=0$.}
\end{array}\right.\end{equation}

Hence, if $\epsilon=1$ and $\lambda_{i_0}\leqslant \lambda_1$, we have $X\in Q_{2,n}\cdot X_{\boldmu}$ with $\boldmu:=(\bolda,0,1)$.
Else, we have $X\in Q_{2,n}\cdot X_{\boldmu}$ with $\boldmu:=(\bolda,\lambda_{i_0},\epsilon)$.

Moreover, given $\bolda\in \mathcal P'(n)$, different elements $(\bolda,l,\epsilon), (\bolda,l',\epsilon') \in \mathcal P''(n)$ give rise to different $(Q_{1,n-1}^{X_{\bolda}})_{ext}$-orbits thanks to \eqref{acQ1nm1}. So if $\boldmu\neq\boldmu'$, we have $Q_{2,n}\cdot X_{\boldmu}\neq Q_{2,n}\cdot X_{\boldmu'}$.
%
%
%
%
%
\end{proof}

Note that we may have $P_{2,n}\cdot X_{\boldmu}=P_{2,n}\cdot X_{\boldmu'}$ with $\boldmu\neq\boldmu'$. A full classification of nilpotent orbits should throw away those cases. However, the description of Lemma \ref{Xmu} will be sufficient for our purpose.

If $\boldmu=(\bolda, \epsilon, l)\in \Pn''(n)$, we denote by $d_{\boldmu}$ the number of parts in the partition of $n$ associated to $\GL_n\cdot X_{\boldmu}$. That is $$d_{\boldmu}=\left\{\begin{array}{l l}d_{\bolda}+1&\textrm{ if $\epsilon=0$ and $l=0$},\\ d_{\bolda} &\textrm{ else.}\end{array}\right.$$

It follows from Lemma \ref{Xmu}  that 
\begin{equation*}
\MN(\pp_{2,n}):=\bigcup_{\boldmu\in \mathcal P''(n)}
\MN_{\boldmu}(\pp_{2,n}), \quad \mbox{ where }\quad
\MN_{\boldmu}(\pp_{2,n})=P_{2,n}\cdot(X_{\boldmu},(\pp^{X_{\boldmu}}_{2,n})^{nil}),
\end{equation*}
\begin{equation*}
\MN(\qq_{2,n}):=\bigsqcup_{\boldmu\in \mathcal P''(n)}
\MN_{\boldmu}(\qq_{2,n}), \quad \mbox{ with }\quad
\MN_{\boldmu}(\qq_{2,n})=Q_{2,n}\cdot(X_{\boldmu},(\qq^{X_{\boldmu}}_{2,n})^{nil}).
\end{equation*}

\begin{lm}\label{dimCmu}
Let $\wfr=\qq_{2,n}$ or $\pp_{2,n}$ and $\boldmu=(\bolda, \epsilon, l)\in \Pn''(n)$.
\begin{enumerate}
\item $(\wfr^{X_{\boldmu}})^{nil}$ is an irreducible subvariety of $\wfr^{X_{\boldmu}}$ of codimension $$c_{\boldmu}=\alter{d_{\boldmu}-1} {\mbox{if $\varepsilon=1$ and $l>0$,}}{d_{\boldmu}}{\mbox{else.}}$$
\item $\overline{\MN_{\boldmu}(\wfr)}$ is a closed irreducible subvariety of $\MN(\wfr)$ of dimension $\dim \wfr-c_{\boldmu}$ 
\end{enumerate}
\end{lm}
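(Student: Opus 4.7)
The proof should follow the same template as Lemma~\ref{irrpxbolda}, using the reduction machinery of Section~\ref{tech} together with a careful case analysis driven by $(\epsilon, l)$. The main task in part (1) is to rewrite $X_{\boldmu}$ in a Jordan basis of $M_n$ (so as to make the results of Section~\ref{tech} applicable), to locate the flag $V_1\subset V_2$ defining $\wfr$ in the decomposition $W = \bigoplus_\ell W'_\ell$, and then to identify each $\wfr^{X_{\boldmu}}(\ell)$ as a subalgebra of $M_{\tau_\ell}$ of the type handled by Lemma~\ref{wfrnil} or Remark~\ref{reml1l2}.

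More precisely, the plan is as follows. First I would compute, for each $\boldmu=(\bolda,l,\epsilon)\in \mathcal P''(n)$, the Jordan type $\boldtau(\boldmu)\in\mathcal P(n)$ of $X_{\boldmu}$ viewed as an element of $M_n$. A direct rank/kernel computation (along the lines sketched for $\epsilon=1$, $l>\lambda_1$, using the change of basis $\tilde g^{i_{\boldmu}}_j := g^{i_{\boldmu}}_j - g^1_j$) shows that the number of Jordan blocks equals $d_{\boldmu}$, matching the definition. I would then write $X_{\boldmu}=X_{\boldtau}$ in the corresponding $f$-basis of \eqref{Xbolda}, and express the lines $V_1=\langle e_1\rangle$ and $V_2=V_1\oplus\langle g^1_1\rangle$ in terms of the $f^i_1$. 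In each case one finds that these lines lie inside $W=\Ker X_{\boldmu}$, so by Lemma~\ref{extpara} the restrictions $\wfr^{X_{\boldmu}}(\ell)$ are parabolic subalgebras of $M_{\tau_\ell}$.

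The case analysis then proceeds as follows. When $\epsilon=0$ and $l=0$, the vector $e_1$ yields an extra length-$1$ Jordan block and $V_1$ sits in $W'_1$; whether $V_2$ sits in $W'_1$ (if $\lambda_1=1$) or straddles $W'_1$ and $W'_{\lambda_1}$ (if $\lambda_1\geq 2$) dictates whether Lemma~\ref{wfrnil}(ii) applies directly or whether the refined decomposition of Remark~\ref{reml1l2} with $(\ell_1,\ell_2)=(1,\lambda_1)$ is needed. Similarly for $\epsilon=0$, $l>0$ the vector $e_1$ extends the $i_{\boldmu}$-th block, and I would identify $V_1$ and $V_2$ with lines in $W'_{l+1}$, possibly straddling $W'_{l+1}$ and $W'_{\lambda_1}$. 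The cases $\epsilon=1$ (so $l=0$ or $l>\lambda_1$) are analogous; in the subcase $l>\lambda_1$ the change of basis $g^1_j\leftrightarrow g^1_j-g^{i_{\boldmu}}_j$ produces an extra block of length $\lambda_1+1$ together with a block of length $l$, and one checks that $V_1$ lies in $W'_{\lambda_1+1}$ while $V_2/V_1$ can be absorbed in the same component, explaining the drop of one in the codimension. In every case, summing the codimensions of $(\wfr^{X_{\boldmu}}(\ell))^{nil}$ in $\wfr^{X_{\boldmu}}(\ell)$ (using Lemma~\ref{commons}(iii), which gives codimension $\tau_\ell$ for each parabolic block) and, when relevant, applying \eqref{decellgen}, gives precisely $c_{\boldmu}$. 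Irreducibility of $(\wfr^{X_{\boldmu}})^{nil}$ then follows from Lemma~\ref{wfrnil}(i) (or the analogous statement from Remark~\ref{reml1l2}).

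Part (2) is then formal. The orbit map $W\times(\wfr^{X_{\boldmu}})^{nil}\to \MN_{\boldmu}(\wfr)$ (with $W=P_{2,n}$ or $Q_{2,n}$) is a dominant morphism from an irreducible source, so $\overline{\MN_{\boldmu}(\wfr)}$ is irreducible. For the dimension I would apply the orbit-stabilizer calculation
\[
\dim \MN_{\boldmu}(\wfr)=\dim W\cdot X_{\boldmu}+\dim(\wfr^{X_{\boldmu}})^{nil}=\dim\wfr-\dim\wfr^{X_{\boldmu}}+\dim(\wfr^{X_{\boldmu}})^{nil}=\dim\wfr-c_{\boldmu},
\]
exactly as in \eqref{codimnil}, using that the stabilizer $W^{X_{\boldmu}}$ is open in $\wfr^{X_{\boldmu}}$. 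The main obstacle I anticipate is the bookkeeping in Step 2: different values of $(\lambda_1,l,\epsilon)$ produce slightly different Jordan types, and one must verify in each subcase that $(\wfr^{X_{\boldmu}})_{red}$ factors as required (either fully as $\prod_\ell \wfr^{X_{\boldmu}}(\ell)$ or in the $(\ell_1,\ell_2)$-coupled form of Remark~\ref{reml1l2}), and that the single codimension unit gained when $\epsilon=1$ and $l>0$ is correctly attributed to the additional $\GL$-orbit freedom produced by having two bottom vectors $g^1_1,g^{i_{\boldmu}}_1$ mapping to $e_1$.
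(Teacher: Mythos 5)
Your high-level plan — put $X_{\boldmu}$ in Jordan canonical form, analyze $(\wfr^{X_{\boldmu}})_{red}$ block-by-block via Lemma~\ref{wfrnil} and Remark~\ref{reml1l2}, split on $(\epsilon,l)$, then use the orbit-stabilizer calculation \eqref{codimnil} — matches the paper. However, several of the concrete claims you make in carrying it out are wrong and would lead to the wrong codimension. The most serious is the assertion that $V_1\subset V_2\subset W=\Ker X_{\boldmu}$. This fails whenever $\epsilon=1$: one has $X_{\boldmu}(g^1_1)=\epsilon e_1$, so $g^1_1\notin W$. In the paper's base change \eqref{basechange}, $g^1_1=f^{i_0}_1+f^{i_{\boldmu}-1}_2$, and that depth-$2$ term is precisely what creates the coupling in the hard case $\epsilon=1$, $l>0$: the condition $Y(g^1_1)\in V_2$, read with the identity $Y^{i,i'}_{2,2}=Y^{i,i'}_{1,1}$ from Lemma~\ref{TA}, forces one scalar $\alpha$ to sit simultaneously in the top-left corner of $Y(\ell_1)$ and $Y(\ell_2)$ (with $\ell_1=l+1$, $\ell_2=\lambda_1$); the drop of one in the codimension is the single constraint $\alpha=0$, not ``$\GL$-orbit freedom''. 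If you instead believe $V_2\subset W$, you never see this coupling and cannot get $c_{\boldmu}=d_{\boldmu}-1$.

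Your block lengths for $\epsilon=1$, $l>\lambda_1$ are also backwards. The correct base change modifies the \emph{shorter} chain, $g^1_j\mapsto g^1_j-g^{i_{\boldmu}}_j$ ($1\leqslant j\leqslant \lambda_1$); your first version $\tilde g^{i_{\boldmu}}_j:=g^{i_{\boldmu}}_j-g^1_j$ is not a valid Jordan base change because it cannot be continued past $j=\lambda_1$. With the correct change, the two new blocks have lengths $l+1$ (the chain $e_1, g^{i_{\boldmu}}_1,\dots,g^{i_{\boldmu}}_l$) and $\lambda_1$ (the chain $g^1_j-g^{i_{\boldmu}}_j$), not $\lambda_1+1$ and $l$; thus $V_1$ sits in the length-$(l+1)$ block, not in $W'_{\lambda_1+1}$, and $V_2/V_1$ is not absorbed in the same component — it ties together two different components, triggering Remark~\ref{reml1l2} with $(\ell_1,\ell_2)=(l+1,\lambda_1)$. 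Finally, your hesitation about $\epsilon=0$, $l=0$, $\lambda_1\geqslant 2$ is unnecessary: there $e_1\in W'_1$ and $g^1_1\in W'_{\lambda_1}$, and since $Y(g^1_1)\in W_{\lambda_1}\not\ni e_1$ (Lemma~\ref{TA}), the condition $Y(g^1_1)\in V_2$ reduces to $Y(g^1_1)\in\K g^1_1$, a constraint on $Y(\lambda_1)$ alone, while $Y(e_1)\in V_2$ projects to a constraint on $Y(1)$ alone; the product decomposition $(\wfr^{X_{\boldmu}})_{red}=\prod_\ell\wfr^{X_{\boldmu}}(\ell)$ holds in every case except $\epsilon=1$, $l>0$, which is what the paper verifies through the explicit list of conditions on $(\wfr^{X_{\boldmu}})_{red}$.
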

\begin{proof}
The computation \eqref{codimnil} remains valid when one replaces $\pp(=\pp_{1,n})$ by $\pp_{2,n}$ or $\qq_{2,n}$. Hence, the second assertion is a consequence of the first one.

The proof is based on case by case considerations on $(\wfr^{X_{\mu}})^{nil}\red$ and the use of Lemma \ref{wfrnil} (or Remark \ref{reml1l2}) in a similar manner as in Lemma \ref{irrpxbolda}.

Firstly, assume that $\varepsilon=0$ or $l=0$. The proof of Lemma \ref{irrpxbolda} can easily be translated here. An elementary base change $(f^i_j)_{i,j}$ based on a reordering of $(e_1, (g^i_j)_{i,j})$ transforms $X_{\boldmu}$ into an element in Jordan canonical form in $M_n$ with partition $\boldmu'\in \mathcal P(n)$.
In these cases, $(\wfr^{X_{\boldmu}})\red$ is defined in $(M_{d_{\boldmu}}^{X_{\boldmu}})\red$ by a condition of one of the types given in the RHS below, for some $i_0$ and possibly $i_1$.
$$ Y\in (\wfr^{X_{\boldmu}})\red\Leftrightarrow 
\;\textrm{(or)}\left\{\begin{array}{l l} Y\red(f^{i_0}_1)\in \K f^{i_0}_1&\scriptsize{(\epsilon=1,l=0)}\\ 
Y\red(f^{i_0}_1)\in \K f^{i_0}_1\textrm{ and }Y\red(f^{i_1}_1)\in \K f^{i_1}_1 &\scriptsize{\left(\begin{array}{c}\epsilon=0, l\neq\lambda_1\\ l+1=\mu'_{i_0}\neq\mu'_{i_1}=\lambda_1+1\end{array}\right)}\\
Y\red(f^{i_0}_1), Y\red(f^{i_1}_1) \in \langle f^{i_0}_1,f^{i_1}_1\rangle& \scriptsize{\left(\begin{array}{c}\wfr=\pp_{2,n}, \epsilon=0,  l=\lambda_1,\\ \mu_{i_0}'=\mu_{i_1}'=l+1\end{array}\right)}\\
 Y\red(f^{i_0}_1)\in \K f^{i_0}_1 \textrm{ and } Y\red(f^{i_1}_1) \in \langle f^{i_0}_1,f^{i_1}_1\rangle& \scriptsize{\left(\begin{array}{c}\wfr=\qq_{2,n}, \epsilon=0,  l=\lambda_1,\\ \mu_{i_0}'=\mu_{i_1}'=l+1\end{array}\right)}\\
\end{array}\right. $$

In particular, $(\wfr^{X_{\boldmu}})\red=\prod_{\ell}\wfr^{X_{\boldmu}}(\ell)$ and each $\wfr^{X_{\boldmu}}(\ell)$ is isomorphic to $M_{\tau_{\ell}}$, $\pp_{1,\tau_{\ell}}$, $\pp_{2,\tau_{\ell}}$ or $\qq_{2,\tau_{\ell}}$.
We then finish as in Lemma \ref{irrpxbolda}.

If $\varepsilon=1$ and $l>0$, we have a more subtle base change to
operate. Let $i_0=\max\{i |\lambda_i>\lambda_1\}$. Recall that the
condition $\epsilon=1$ implies the inequality
$1<i_{\boldmu}\leqslant i_0$ (cf. \eqref{defpseconde}). Let
\begin{equation}\label{basechange}f^i_j:=\left\{\begin{array}{l l}
g_j^{i+1}&\mbox{ if $i<i_0$, $i+1\neq i_{\boldmu}$ and $1\leqslant j\leqslant \lambda_{i+1}$},\\
g_{j-1}^{i+1}&\mbox{ if $i+1=i_{\boldmu}$ and $1<j\leqslant \lambda_{i_{\boldmu}}+1$},\\
e_1&\mbox{ if $i+1=i_{\boldmu}$ and $j=1$},\\
g_{j}^{1}-g_j^{i_{\boldmu}}&\mbox{ if $i=i_{0}$ and $1\leqslant j\leqslant \lambda_{1}$}.\\
g_{j}^{i}&\mbox{ if $i>i_{0}$ and $1\leqslant j\leqslant \lambda_{i}$},\\
\end{array}\right.\qquad 
\end{equation}
In this new basis, $X_{\boldmu}$ is in Jordan canonical form associated to a partition $\boldmu'=(\mu'_1\geqslant \dots\geqslant \mu'_{d_{\bolda}})\in \Pn(n)$ and $\qq_{2,n}$ (resp. $\pp_{2,n}$) is characterized by the two conditions
\begin{equation}\label{caracp2n}Y\in \qq_{2,n} \textrm{ (resp. }\pp_{2,n})\Leftrightarrow \left\{\begin{array}{l}Y(f_1^{i_{\boldmu}-1})\in \K f_1^{i_{\boldmu}-1} \quad
(\mbox{resp. } Y(f_1^{i_{\boldmu}-1})\in 
\langle f_1^{i_{\boldmu}-1},  f_1^{i_{0}}+ f_2^{i_{\boldmu}-1}\rangle), \\
Y( f_1^{i_{0}}+ f_2^{i_{\boldmu}-1})\in 
\langle f_1^{i_{\boldmu}-1},  f_1^{i_{0}}+ f_2^{i_{\boldmu}-1}\rangle.
\end{array}\right.
\end{equation}
Define $\ell_1:=\mu'_{i_{\boldmu}-1}=\lambda_{i_{\boldmu}}+1=l+1$ and  $\ell_2:=\mu'_{i_0}=\lambda_1$.
From now on, we assume that $Y\in M_n^{X_{\boldmu}}$. Then $Y(f_1^{i_{\boldmu}-1})$ has no component in $f_2^{i_{\boldmu}-1}$ (Lemma \ref{TA}). Hence, for such $Y$, the two conditions on the first line of \eqref{caracp2n} are both equivalent to the existence of some $\alpha\in \K $ such that $Y(f_1^{i_{\boldmu}-1})=\alpha f_1^{i_{\boldmu}-1}$. 

Now, write $Y(f_1^{i_0})=\sum_i \beta_i f_1^i$  and $Y(f_2^{i_{\boldmu}-1})=\sum_i\gamma_i f_1^i+\gamma'_i f_2^i$ (Lemma \ref{TA}). We note that $\gamma'_{i_{\boldmu}-1}=\alpha$ and, since $\mu'_{i_{\boldmu}-1}= \lambda_{i_{\boldmu}}+1\geqslant \lambda_1+2=\mu'_{i_0}+2$, we have $\gamma_{i}=0$ for all $i$ such that $\mu'_i=\mu'_{i_0}$ (Lemma \ref{TA}).
Hence the second condition of \eqref{caracp2n}, $Y( f_1^{i_{0}}+ f_2^{i_{\boldmu}-1})=\xi f_1^{i_{\boldmu}-1}+\delta(f_1^{i_{0}}+ f_2^{i_{\boldmu}-1})$, implies $\beta_{i_0}=\delta=\gamma'_{i_{\boldmu}-1}=\alpha$ and $\beta_{i}=0$ for all $i\neq i_{0}$ such that $\mu'_{i}=\mu'_{i_0}$.
Thus,  we have the following characterization of $(\wfr^{X_{\boldmu}})\red$ in $M_n^{X_{\boldmu}}$:
$$Y\red\in (\wfr^{X_{\boldmu}})\red\Leftrightarrow Y(\ell_1),Y(\ell_2)=\left(\decpp{\alpha}{A_1}{B_1},\decpp{\alpha}{A_2}{B_2}\right), \quad
\begin{array}{c}\alpha\in \K,\\ A_j\in M_{1,\tau_{\ell_j}-1},\\ B_j\in M_{\tau_{\ell_j}-1}.\end{array}$$
Hence $\wfr^{X_{\bolda}}\red=\wfr^{X_{\bolda}}(\ell_1, \ell_2)\times\prod_{\ell\notin\{\ell_1,\ell_2\}} \wfr^{X_{\bolda}}(\ell)$; $\wfr^{X_{\boldmu}}(\ell)=M_{\tau_{\ell}}$ for $\ell\neq
\ell_1,\ell_2$ and $(\wfr^{X_{\boldmu}}(\ell_1,\ell_2))^{nil}$ is
characterized in $\wfr^{X_{\boldmu}}(\ell_1,\ell_2)$ by the conditions
$\alpha=0$, $B_1, B_2$ nilpotent (Lemma \ref{commons}). Thus
$(\wfr^{X_{\boldmu}}(\ell_1,\ell_2))^{nil}$ is an irreducible variety
of codimension $\tau_{\ell_1}+\tau_{\ell_2}-1$ in
$\wfr^{X_{\boldmu}}(\ell_1,\ell_2)$ (Lemma \ref{commons}); the variety
$(\wfr^{X_{\boldmu}})^{nil}$ is also irreducible and
$\codim_{\wfr^{X_{\boldmu}}}(\wfr^{X_{\boldmu}})^{nil}=d_{\boldmu}-1$
(Remark \ref{reml1l2}). Hence we have proved the first assertion follows in this last case. 
\end{proof}

\begin{thm}\label{propp2n}
Let $\wfr=\qq_{2,n}$ or $\pp_{2,n}$. Then $\MN(\wfr)$ is equidimensional of dimension $\dim \wfr-1$. It 
 has $\left \lfloor \frac n 2\right \rfloor$ components. 
\end{thm}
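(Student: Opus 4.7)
The plan is to combine Lemma \ref{dimCmu} with the dimension lower bound of Proposition \ref{dimqq} and a fibration argument that transfers information from $\qq_{2,n}$ to $\pp_{2,n}$. By Lemma \ref{Xmu} and Lemma \ref{dimCmu} we have $\MN(\wfr) = \bigcup_{\boldmu \in \mathcal P''(n)} \overline{\MN_{\boldmu}(\wfr)}$, with each $\overline{\MN_{\boldmu}(\wfr)}$ irreducible of dimension $\dim \wfr - c_{\boldmu}$. Unwinding the formula for $c_{\boldmu}$ shows $c_{\boldmu} \geq 1$ always, with equality exactly for the ``regular'' choice $\boldmu = ((n-1,\emptyset),0,1)$ and for the ``two-block'' choices $\boldmu = ((\lambda_1,(\lambda_2)),\lambda_2,1)$ with $1 \leq \lambda_1 < \lambda_2$ and $\lambda_1+\lambda_2 = n-1$. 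A direct count yields $1 + \lfloor (n-2)/2 \rfloor = \lfloor n/2 \rfloor$ such $\boldmu$.

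For $\wfr = \qq_{2,n}$ the conclusion is immediate. By Lemma \ref{Xmu} distinct $\boldmu$ produce distinct $Q_{2,n}$-orbits of $X_{\boldmu}$, so the corresponding $\overline{\MN_{\boldmu}(\qq_{2,n})}$ are pairwise distinct irreducible closed subsets. Proposition \ref{dimqq} forces every irreducible component of $\MN(\qq_{2,n})$ to have dimension at least $\dim\qq_{2,n}-1$, which excludes those $\overline{\MN_{\boldmu}(\qq_{2,n})}$ with $c_{\boldmu} \geq 2$ from being components. The remaining $\lfloor n/2 \rfloor$ closures are irreducible of the maximal possible dimension $\dim \qq_{2,n} - 1$, and are therefore the irreducible components of $\MN(\qq_{2,n})$.

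For $\wfr = \pp_{2,n}$ the lower bound of Section \ref{seclowbound} only gives $\dim\pp_{2,n}-2$, so one needs an additional argument to exclude components of codimension $2$; this is the main obstacle. The plan is to exploit the $P_{2,n}$-equivariant surjection
\[
\phi : P_{2,n} \times_{Q_{2,n}} \MN(\qq_{2,n}) \twoheadrightarrow \MN(\pp_{2,n}), \qquad [g,(X,Y)] \mapsto (gXg^{-1}, gYg^{-1}),
\]
whose surjectivity follows from the fact (already used in Lemma \ref{Xmu}) that any commuting pair of nilpotent elements of $\pp_{2,n}$ can be simultaneously trigonalized on $V_2$ by an element of $\GL(V_2) \subset P_{2,n}$ and hence $P_{2,n}$-conjugated into $\qq_{2,n}$. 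The source is an $\MN(\qq_{2,n})$-bundle over $P_{2,n}/Q_{2,n} \cong \mathbb P^1$, so by the previous paragraph it has $\lfloor n/2 \rfloor$ irreducible components, each of dimension $1+(\dim\qq_{2,n}-1) = \dim\pp_{2,n}-1$. The fiber of $\phi$ above $(X,Y)$ is the variety of common eigenlines of $X|_{V_2}$ and $Y|_{V_2}$ in $\mathbb P(V_2)$, which reduces to a single point as soon as $X|_{V_2} \neq 0$ or $Y|_{V_2} \neq 0$; for any $\boldmu$ with $c_{\boldmu} = 1$ the representative $X_{\boldmu}$ satisfies $X_{\boldmu}(e_2) = e_1 \neq 0$ because $\varepsilon = 1$, so $\phi$ is generically one-to-one on each component of the source.

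Consequently, the image of each source component is irreducible of dimension $\dim\pp_{2,n}-1$; the $\lfloor n/2 \rfloor$ images are pairwise distinct because the generic $\GL_n$-Jordan type of the $X$-coordinate is a $P_{2,n}$-invariant distinguishing them (the regular case giving $(n)$, the two-block cases giving the pairwise distinct types $(\lambda_2+1,\lambda_1)$). Since $\phi$ is surjective, $\MN(\pp_{2,n})$ is the union of these $\lfloor n/2 \rfloor$ irreducible closed subsets, each of dimension $\dim\pp_{2,n}-1$, proving equidimensionality and the component count. The critical step is the fiber computation, which converts the sharp codimension-one bound available on the $\qq$-side into the same bound on the $\pp$-side, compensating for the weaker general estimate.
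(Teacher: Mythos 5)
Your proof is correct, and it reuses the same preparatory results (Lemmas \ref{Xmu}, \ref{dimCmu}, Proposition \ref{dimqq}) and the same enumeration of the $\boldmu$ with $c_{\boldmu}=1$. The $\qq_{2,n}$ half is identical to the paper's. Where you diverge is the $\pp_{2,n}$ half: you package the passage from $\qq$ to $\pp$ as the associated-bundle map $P_{2,n}\times_{Q_{2,n}}\MN(\qq_{2,n})\twoheadrightarrow\MN(\pp_{2,n})$ over $P_{2,n}/Q_{2,n}\cong\PP(V_2)$, then control the image via a fiber computation (generically a point once $X|_{V_2}\neq 0$, which holds on $\MN_{\boldmu}(\qq_{2,n})$ because $\epsilon=1$ forces $X_{\boldmu}(e_2)=e_1$). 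The paper dispenses with the bundle entirely: it first invokes Lemma \ref{dimCmu}(2) to get $\dim\overline{\MN_{\boldmu}(\pp_{2,n})}=\dim\pp_{2,n}-c_{\boldmu}$ directly, shows the relevant $X_{\boldmu}$ lie in pairwise distinct $P_{2,n}$-orbits by comparing $\GL_n$-Jordan types (same invariant you use), and then proves the covering $\MN(\pp_{2,n})\subset\bigcup_{c_{\boldmu}=1}\overline{\MN_{\boldmu}(\pp_{2,n})}$ in one line: any $(X,Y)\in\MN(\pp_{2,n})$ is $\GL(V_2)$-conjugate into $\MN(\qq_{2,n})$, hence lands in some $\overline{\MN_{\boldmu}(\qq_{2,n})}\subset\overline{\MN_{\boldmu}(\pp_{2,n})}$ with $c_{\boldmu}=1$, and these closures are $P_{2,n}$-stable. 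Both arguments hinge on the same conjugation fact; yours is more geometric and makes $\PP^1$ explicit, at the cost of needing the source's properness over $\MN(\pp_{2,n})$ (true, since the source sits closed in $\PP^1\times\MN(\pp_{2,n})$ and $\phi$ is the second projection, but worth stating) and the fiber analysis, while the paper's is shorter because Lemma \ref{dimCmu}(2) already hands you the dimensions on the $\pp$-side and the covering is a one-step conjugation. One small remark: once you grant Lemma \ref{dimCmu}(2) for $\pp_{2,n}$, the bundle and fiber computation recompute a dimension you already have; the only genuinely new information the bundle gives you is the covering, which the paper obtains more cheaply.
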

\begin{proof}
We have $\min\{c_{\boldmu}|\boldmu\in \mathcal P''(n)\}=1$. Hence, it follows from Lemma \ref{dimCmu} and Proposition \ref{dimqq} that each irreducible component of $\MN(\qq_{2,n})$ has dimension $\dim \qq_{2,n}-1$. There are two types of $\boldmu\in \Pn''(n)$ such that $c_{\boldmu}=1$. 
\begin{itemize}
\item $\boldmu=((n-1,\emptyset),0,1)$ which is the only element whose associated partition of $n$ has just one part.
\item $\boldmu=((\lambda_1,\lambda_2),\lambda_2,1)$ with $\lambda_2>\lambda_1$. Its associated partition of $n$ has two parts: $(\lambda_2+1\geqslant \lambda_1)$, cf.  \eqref{basechange} for more details. Note that this covers (the transpose of) the partitions involved in the proof of Proposition \ref{composantesDeDimNMoins1DansSdeuxno} since $\lambda_2>\lambda_1\Leftrightarrow(\lambda_2+1)-\lambda_1\geqslant 2$.
\end{itemize} There are $\left \lfloor \frac n 2\right \rfloor$ such elements, whence the statement for $\wfr=\qq_{2,n}$.

It follows from the description above that the map $\{\boldmu\in
\mathcal P''(n)| c_{\boldmu}=1\}\rightarrow \mathcal P(n)$ which sends
$\boldmu$ to the partition associated to $\GL_n.X_{\boldmu}$ is
injective. In particular, the different such $X_{\boldmu}$ belong to
different $P_{2,n}$-orbits. So the associated varieties
$\overline{\MN_{\boldmu}(\pp_{2,n})}$, which are the irreducible
components of maximal dimension of $\MN(\pp_{2,n})$, are all distinct.

There remains to prove that there is no other irreducible component in $\MN(\pp_{2,n})$. Let 
$(X,Y)\in \MN(\pp_{2,n})$. The pair $(X_{|V_2}, Y_{|V_2})$ is a commuting pair in $\gl(V_2)$ hence, up to $\GL(V_2)\times Id_{U_2}(\subset P_{2,n})$-conjugacy, we can assume that $X(e_1)=Y(e_1)=0$. That is $(X,Y)\in \MN(\qq_{2,n})$. In particular, there exists $\boldmu\in \mathcal P''(n)$ such that $(X,Y)\in \overline{\MN_{\boldmu}(\qq_{2,n})}\subset \overline{\MN_{\boldmu}(\pp_{2,n})}$ and $c_{\boldmu}=1$. We have therefore shown that $$\MN(\pp_{2,n})\subset \bigcup_{c_{\boldmu}=1}\overline{\MN_{\boldmu}(\pp_{2,n})},$$
and we are done.
\end{proof}

\begin{rem} \label{remfin}(i) The key point of this last proof in the case $\wfr=\pp_{2,n}$ is that $\dim \MN_{\boldmu}(\qq_{2,n})$ and $\dim \MN_{\boldmu}(\pp_{2,n})$ are both related to the same integer $c_{\boldmu}$. This is what allows us to carry out the equidimensionality property from $\MN(\qq_{2,n})$ to $\MN(\pp_{2,n})$\\
(ii) The method used in this section is deeply based on the decomposition of $\MN(\wfr)$ as a finite union of the irreducible subvarieties $\MN_{\boldmu}(\wfr)$. 
For this, the classification into finitely many orbits of Lemma \ref{Xmu} plays a key role. This situation breaks down in general for $\pp_{k,n}$. Using quiver theory and techniques similar to \cite{Bo}, M. Reineke communicated to us an example of an infinite family of $P_{6,12}$-orbits in $\pp_{6,12}$. \\
(iii)  Similarly, in \cite{GR}, the authors show that some continuous families of $Q_{n,n}$-orbits exist in $\qq_{n,n}$ (Borel case) as soon as $n\geqslant 6$. From this, they deduce the existence of irreducible components of $\MN(\qq_{n,n})$ of dimension greater or equal than $\dim \qq_{n,n}$ showing that the variety is not equidimensional in these cases.
\end{rem}

\begin{coro}\label{corpropp2n}
  $\sdeuxno$,$\sxno{n-2}$, $\sxnoemb{n-2}$ are equidimensional of
  dimension $n-1$. They have $\left \lfloor \frac{n}{2} \right \rfloor$ components.
\end{coro}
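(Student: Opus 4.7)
The plan is to deduce the three statements from Theorem~\ref{propp2n} through the bijection of Proposition~\ref{corresirred}. Set $\wfr=\pp_{2,n}$ or $\qq_{2,n}$. Proposition~\ref{corresirred} will provide a dimension-shifting bijection between irreducible components of $\sxno{n-2}$ (resp.\ $\sxnoemb{n-2}$) and those of $\MN^{cyc}(\pp_{2,n})$ (resp.\ $\MN^{cyc}(\qq_{2,n})$): components of dimension $m$ on the Hilbert side correspond to components of dimension $m+\dim\wfr-n$ on the commuting-variety side. Since $\MN^{cyc}(\wfr)$ is open in $\MN(\wfr)$, its irreducible components are precisely the traces on $\MN^{cyc}(\wfr)$ of those components of $\MN(\wfr)$ which meet the cyclic locus, and they inherit the dimension of the ambient component. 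By Theorem~\ref{propp2n}, $\MN(\wfr)$ is equidimensional of dimension $\dim\wfr-1$ with exactly $\lfloor n/2\rfloor$ components, so once each of them is shown to meet the cyclic locus, I will obtain $\lfloor n/2\rfloor$ components of dimension $(\dim\wfr-1)-(\dim\wfr-n)=n-1$ on the Hilbert side, as required.

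To handle $\sdeuxno=\sxno{n-(n-2)}$ I will apply Proposition~\ref{corresirred} with $k=n-2$ and use the transposition isomorphism $\MN(\pp_{n-2,n})\simeq\MN(\pp_{2,n})$ of Remark~\ref{remSurLaSymetrie} to transport Theorem~\ref{propp2n}. The map $\psi'$ of~\eqref{transiso} sends a cyclic pair in $\MN(\pp_{2,n})$ to a pair in $\MN(\pp_{n-2,n})$ that is cyclic (after taking transposes), so the same reduction applies.

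The essential step is to produce a cyclic pair in every irreducible component of $\MN(\wfr)$. From the proof of Theorem~\ref{propp2n}, these components are indexed by the $\lfloor n/2\rfloor$ triples $\boldmu\in\mathcal P''(n)$ with $c_{\boldmu}=1$, of two types. When $\boldmu=((n-1,\emptyset),0,1)$, the matrix $X_{\boldmu}$ is a single $n\times n$ Jordan block, and $(X_{\boldmu},0)$ already admits a cyclic vector. When $\boldmu=((\lambda_1,\lambda_2),\lambda_2,1)$ with $\lambda_2>\lambda_1$, the Jordan type of $X_{\boldmu}$ in $M_n$ is $(\lambda_2+1,\lambda_1)$; I will pick $Y\in\wfr^{X_{\boldmu}}$ to be the nilpotent element whose only nonzero block carries the head of the large Jordan block of $X_{\boldmu}$ to the head of the small one. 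Such a $Y$ stabilizes the flag $V_1\subset V_2$, hence lies in $\qq_{2,n}$ and \emph{a fortiori} in $\pp_{2,n}$; the head of the large block of $X_{\boldmu}$ is then a cyclic vector for $(X_{\boldmu},Y)$, because its $X_{\boldmu}$-iterates span the large block, a single application of $Y$ lands on the head of the small block, and further $X_{\boldmu}$-iterates span that block.

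The main obstacle is this last verification. Conceptually the construction of $Y$ is transparent, but one must keep track of the explicit basis used in the proof of Theorem~\ref{propp2n} (and, for $\sdeuxno$, of the transposition of~\eqref{transiso}) in order to confirm that $Y$ indeed lies in $\wfr$, is nilpotent, and produces the desired cyclic vector; this is a finite, case-free matricial check.
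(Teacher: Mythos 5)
Your proposal takes a genuinely different route from the paper. The paper's own proof of Corollary~\ref{corpropp2n} combines an \emph{upper} bound ($\MN^{cyc}$ is open in $\MN$, which has $\lfloor n/2\rfloor$ components) with the \emph{lower} bound provided by the explicit Gr\"obner/Bialynicki--Birula computations in Section~\ref{sec:reduc-nest-hilb} (Proposition~\ref{composantesDeDimNMoins1DansSdeuxno} for $\sdeuxno$, Remark~\ref{remCodim2commeDim2} for $\sxno{n-2}$, and the surjection $\sxnoemb{n-2}\to\sxno{n-2}$ of Proposition~\ref{schemasintermediaires}). You instead try to make Proposition~\ref{corresirred} into a clean bijection by verifying that every irreducible component of the relevant $\MN(\wfr)$ contains a cyclic pair, which would make the Section~\ref{sec:reduc-nest-hilb} computations redundant. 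For $\wfr=\pp_{2,n}$ and $\wfr=\qq_{2,n}$ (hence for $\sxno{n-2}$ and $\sxnoemb{n-2}$), your construction works: taking $Y$ to carry the generator of the long Jordan block of $X_{\boldmu}$ to the generator of the short one is a $\K[X_{\boldmu}]$-module map, is nilpotent, lands in $\qq_{2,n}^{X_{\boldmu}}\subset\pp_{2,n}^{X_{\boldmu}}$, and makes the long generator cyclic. This is a self-contained, purely matricial argument for those two cases, arguably cleaner than the paper's.

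However, your handling of $\sdeuxno$ via the transposition isomorphism has a genuine gap. The map $\psi'$ of~\eqref{transiso} does \emph{not} send cyclic pairs to cyclic pairs: if $(X,Y)$ has a cyclic vector, then $({}^t\psi'(X),{}^t\psi'(Y))=(-sXs^{-1},-sYs^{-1})$ has one, but $(\psi'(X),\psi'(Y))$ itself has a cyclic vector if and only if $({}^tX,{}^tY)$ does, which is a different condition (Gorenstein-ness of $\K[x,y]/\Ker(ev_n)$, equivalently a one-dimensional common kernel $\Ker X\cap\Ker Y$). This is precisely the asymmetry the paper flags in Remark~\ref{remSurLaSymetrie}. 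Concretely, for $\boldmu=((\lambda_1,\lambda_2),\lambda_2,1)$ with $\lambda_2>\lambda_1$, your $Y$ (which kills the short block) gives $\Ker X_{\boldmu}\cap\Ker Y$ two-dimensional, so $(\psi'(X_{\boldmu}),\psi'(Y))$ is \emph{not} cyclic, and "the same reduction" does not apply. The statement that every component of $\MN(\pp_{n-2,n})$ meets the cyclic locus is in fact true and can be salvaged by choosing instead a $Y'\in(\pp_{2,n}^{X_{\boldmu}})^{nil}$ with $1_W\mapsto t^{\ell_1-\ell_2}\,1_U$ (sending the generator of the short block into the long block), which makes $\Ker X_{\boldmu}\cap\Ker Y'$ one-dimensional and hence $\psi'(X_{\boldmu},Y')$ cyclic — but this is a different element, verified by a different computation, and it is not contained in your proposal as written.
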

\begin{proof}
  The number of components in  $\sdeuxno$ is (Proposition \ref{corresirred}) the number of 
  components in $\MN^{cyc}(\pp_{n-2,n})$, thus at most the number
  $\left \lfloor \frac{n}{2} \right\rfloor$ of components in the variety
  $\MN(\pp_{n-2,n})$ which may contain noncyclic components. On the
  other hand, we have exhibited $\left \lfloor \frac{n}{2} \right \rfloor$
  components of dimension $n-1$ in  $\sdeuxno$ in Proposition 
 \ref{composantesDeDimNMoins1DansSdeuxno}, hence the conclusion for
 $\sdeuxno$. The same argument applies to $\sxno{n-2}$, using Remark~\ref{remCodim2commeDim2}.

Finally, from the existence of a surjective morphism
 $\sxnoemb{n-2}\to  \sxno{n-2}$ (Proposition \ref{schemasintermediaires}), we see that  $\sxnoemb{n-2}$
has at least $ \left \lfloor \frac{n}{2} \right \rfloor$ components. 
But Theorem \ref{propp2n} implies that there are at most $\left \lfloor \frac{n}{2}
\right \rfloor$ components, and that these components have dimension $n-1$. 
The result follows.
\end{proof}
\bibliographystyle{plain}

\end{document}